\newcommand\reallywidehat[1]{%
\savestack{\tmpbox}{\stretchto{%
  \scaleto{%
    \scalerel*[\widthof{\ensuremath{#1}}]{\kern-.6pt\bigwedge\kern-.6pt}%
    {\rule[-\textheight/2]{1ex}{\textheight}}
  }{\textheight}%
}{0.5ex}}%
\stackon[1pt]{#1}{\tmpbox}%
}
\newcommand{\ver}{{\vert\kern-0.25ex\vert\kern-0.25ex\vert }}
\theoremstyle{plain}
\newtheorem{theorem}{Theorem}[section]
\newtheorem{prop}[theorem]{Proposition}
\newtheorem{lemma}[theorem]{Lemma}
\newtheorem{coro}[theorem]{Corollary}
\theoremstyle{definition}
\newtheorem{definition}[theorem]{Definition}
\newtheorem{remark}[theorem]{Remark}
\newtheorem{example}[theorem]{Example}
\newcommand{\ZZ}{{\mathbb Z}}
\newcommand{\RR}{{\mathbb R}}
\newcommand{\NN}{{\mathbb N}}
\newcommand{\CC}{{\mathbb C}}
\newcommand{\cA}{{\mathcal A}}
\newcommand{\cB}{{\mathcal B}}
\newcommand{\im}{{\mathrm{i}}}
\newcommand{\supp}{{\mbox{supp}}}
\newcommand{\mc}{\mathcal}
\newcommand{\dd}{\mbox{d}}
\newcommand{\eps}{\varepsilon}
\newcommand{\cM}{{\mathcal M}}
\newcommand{\WAP}{\mathcal{W}\hspace*{-1pt}\mathcal{AP}}
\newcommand{\SAP}{\mathcal{S}\hspace*{-2pt}\mathcal{AP}}
\newcommand{\Bap}{\mathcal{B}\hspace*{-1pt}{\mathsf{ap}}}
\newcommand{\bap}{Bap}
\newcommand{\exend}{\hfill $\Diamond$}
\newcommand{\lm}{\ensuremath{\lambda\!\!\!\lambda}}
\newcommand{\ts}{\hspace{0.5mm}}
\newcommand{\leftbrac}{\text{\textlquill} }
\newcommand{\rightbrac}{\text{\textrquill} }
\newcommand{\Cu}{C_{\mathsf{u}}}
\newcommand{\Cc}{C_{\mathsf{c}}}
\newcommand{\ostar}{\mathbin{\mathpalette\make@circled\star}}
\newcommand{\make@circled}[2]{%
  \ooalign{$\m@th#1\smallbigcirc{#1}$\cr\hidewidth$\m@th#1#2$\hidewidth\cr}%
}
\newcommand{\smallbigcirc}[1]{%
  \vcenter{\hbox{\scalebox{0.77778}{$\m@th#1\bigcirc$}}}%
}
\newcommand{\ebeA}{\circledast_{\cA}}
\newcommand{\ebeB}{\circledast_{\cB}}
\begin{document}

\title{The (twisted) Eberlein convolution of measures}

\dedicatory{Dedicated to our friend, Uwe Grimm.}

\author{Daniel Lenz}
\address{Mathematisches Institut, Friedrich Schiller Universit\"at Jena, 07743 Jena, Germany}
\email{daniel.lenz@uni-jena.de}
\urladdr{http://www.analysis-lenz.uni-jena.de}

\author{Timo Spindeler}
\address{Fakult\"at f\"ur Mathematik, Universit\"at Bielefeld, \newline
\hspace*{\parindent}Postfach 100131, 33501 Bielefeld, Germany}
\email{tspindel@math.uni-bielefeld.de}

\author{Nicolae Strungaru}
\address{Department of Mathematical Sciences, MacEwan University \\
10700 -- 104 Avenue, Edmonton, AB, T5J 4S2, Canada\\
and \\
Institute of Mathematics ``Simon Stoilow''\\
Bucharest, Romania}
\email{strungarun@macewan.ca}
\urladdr{http://academic.macewan.ca/strungarun/}

\begin{abstract}
In this paper, we study the properties of the Eberlein convolution
of measures and introduce a twisted version of it. For functions we
show that the twisted Eberlein convolution can be seen as a
translation invariant function-valued inner product. We study its
regularity properties and show its existence on suitable sets of
functions. For translation bounded measures we show that the
(twisted) Eberlein convolution  always exists along subsequences of
the given sequence, and is a weakly almost periodic and Fourier
transformable measure.  We prove that if one of the two measures is
mean almost periodic, then the (twisted) Eberlein convolution is
strongly almost periodic. Moreover, if one of the measures is norm
almost periodic, so is the (twisted) Eberlein convolution.
\end{abstract}

\keywords{Almost periodic measures, Eberlein convolution, pure point
diffraction, spectral theory}

\subjclass[2010]{52C23, 37A45, 42A75, 43A05, 43A60, 43A07, 43A25,
37A30}

\maketitle

\section{Introduction}
Quasicrystals are structures that are ordered despite not being
periodic. A key feature is occurrence of (pure) point diffraction.
Quasicrystals were discovered in 1982 \cite{She} and have since then
been studied by physicists, material scientists, chemists and
mathematicians.

There are two important  classes of models used in the  study of
quasicrystals, namely cut and project schemes and substitution
rules. For cut and project sets, occurrence of pure point spectrum is
well understood. For substitutions there still is an open question
in this direction  known as Pisot conjecture.  This remains one of
the most important open question in the field of aperiodic order.

A recent approach centered around renormalisation equations
\cite{BGM} shows promise for the study of (Pisot) substitution and
some progress in this direction has been made, see for example
\cite{BFGR,BG,BGM} just to name a few.

The basic idea beyond the renormalisation approach is the following.
Consider a (Pisot) substitution on a finite alphabet $\{a_1,\ldots ,
a_k \}$ and let $\Lambda_1, \ldots, \Lambda_k$ be the left-end
points of each tile-type in the geometric fixed point of the
substitution. For each $1 \leqslant i,j \leqslant k$, define the pair
correlations
\[
\gamma_{ij}=\delta_{\Lambda_i} \ebeA \widetilde{\delta_{\Lambda_j}}
\ts.
\]
Then, the inflation rule for the substitution induces a system of
equations, called the renormalisation equations, on the pair
correlation measures. Via the study of the renormalisation
equations, the spectral nature of various substitution has been
established in the papers mentioned above.

This approach emphasizes the need to study Eberlein convolutions of
the type $\mu \ebeA \widetilde{\nu}$. It is the primary goal of this
article to systematically study such Eberlein convolutions of
arbitrary measures in full generality.

Let us emphasize here that, building on the work of Baake--Grimm
\cite{BG2,BG3}, an orthogonality type relation between measures of
different spectral type with respect to this type of Eberlein
convolutions has been shown in \cite{BS,NS21b}. This seems to be the
first step towards a generalized Eberlein decomposition for the
original structure itself, which would simplify the study of
diffraction measure for systems with mixed spectra.

One of the issues with respect to the Eberlein convolution is its
behaviour with respect to commutativity. The issue becomes clear in
the equation
\[
\mu \ebeA \nu =\nu \circledast_{-\cA} \mu
\]
which shows that commutativity does not hold unless the van Hove
sequence $\cA$ is symmetric. This relation can create issues when
one deals with the Fourier--Bohr coefficients of such convolutions,
as well as with the autocorrelation of linear combinations of
measures. It turns out that for computations involving the
autocorrelation, it is useful to consider a slightly different
version of the Eberlein convolution. Indeed, in all such
computations one deals only with Eberlein convolutions of the form
$\mu \ebeA \widetilde{\nu}$ which satisfy the $\sim$-commutativity
relation
\[
\nu \ebeA \widetilde{\mu} = \widetilde{ \mu \ebeA \widetilde{\nu} }
\ts.
\]
To avoid the (temporary) switch in the van Hove sequence, which, if
treated with care, disappears at the end of the computation, we
introduce the notion of twisted Eberlein convolution as
\[
\leftbrac \mu , \nu \rightbrac_{\cA} := \mu \ebeA \widetilde{\nu}
\ts.
\]
This allows us to always work with the same van Hove sequence when
dealing with this type of computations, and transform the reflection
to the $\sim$-commutativity relation
\[
\leftbrac \nu , \mu \rightbrac_{\cA} =\widetilde{\leftbrac \mu , \nu
\rightbrac_{\cA}} \ts.
\]
We show that the (twisted) Eberlein convolution of measures always
exists along a subsequence of the given van Hove sequence, and it is
a Fourier transformable measure that is weakly almost periodic.
Moreover, if one of the given measures $\mu, \nu$ is mean almost
periodic with respect to $\cA$, we prove in Theorem~\ref{thm tebe is
SAP} that $\leftbrac \mu , \nu \rightbrac_{\cA}$ is a strongly
almost periodic measure. In particular, we get an alternate proof
that a mean almost periodic measure has pure point diffraction
spectrum. Also, given two translation bounded and Besicovitch almost
periodic measures $\mu,\nu \in \Bap_{\cA}^2(G)$, we show that their
twisted Eberlein convolution $\leftbrac \mu , \nu \rightbrac_{\cA}$
exists along $\cA$, has pure point Fourier transform and satisfies
the generalized Consistent Phase Property (CPP)
\[
\reallywidehat{\leftbrac \mu , \nu \rightbrac_{\cA}}(\{ \chi
\})=a_\chi^{\cA}(\mu) \overline{a_\chi^{\cA}(\nu) } \ts.
\]

On a more structural level  our results can be understood as
follows:  On the strongly almost periodic functions $SAP(G)$ there
exists a unique map
\[
\langle \cdot , \cdot \rangle : SAP(G)\times SAP(G) \to SAP(G)
\]
with the following properties:
\begin{itemize}
\item The map is  compatible with translates: $T_t \langle f,g \rangle =
\langle  f, T_t g \rangle = \langle T_{-t} f, g\rangle $ for all
$t\in G$.
\item  The map is linear in the first argument and conjugate linear in the
second argument.
\item  The function $\langle f, f\rangle $ is positive definite.
\item  $\langle \chi,\chi\rangle = \chi$ for any character $\chi \in
\widehat{G}$.
\end{itemize}
Observe here that for any two characters $\chi,\psi \in \widehat{G}$
and all $t \in G$ we have
\[
\langle \chi, \psi \rangle= \langle T_t \chi , T_t \psi \rangle =
\chi(t) \overline{\psi(t)} \langle \chi, \psi \rangle \,,
\]
which immediately implies the orthogonality relation $\langle \chi,
\psi \rangle=0$ for all $\chi \neq \psi$.

It is easy to see that one such a map is given by
\[
\langle f,g \rangle (0) = M(f\bar{g})
\]
for all $f,g$. Moreover, from positive definiteness and translation
invariance we infer that any such  must have the continuity feature
that
\[
\| \langle f,g \rangle \|_\infty \leqslant  \| f \|_{b,2,\cA} \| g
\|_{b,2
  ,\cA} \ts.
\]
This in turn allows to extend the map from the characters to linear
combination of characters and uniqueness follows. The main thrust of
the article is then to extend such a map to larger classes of
functions and even measures. It turns out that such an extension is
possible and the map will still have its range in the strongly
almost periodic functions provided a suitable weak smoothness of its
factors holds.

\section{Preliminaries}
Throughout this paper, $G$ is a second countable, locally compact
(Hausdorff) Abelian group. Its corresponding Haar measure is denoted
by $\theta_G$ or simply $|\cdot|$. Integration with respect to the
Haar measure is denoted by $\int \ts \dd t$ or $\int \ts \dd s$. As
usual, the space of continuous and compactly supported functions
from $G$ to $\CC$ is denoted by $\Cc(G)$, while the space of
uniformly continuous and bounded functions is denoted by $\Cu(G)$.
For $p\in [1,\infty]$, we denote by $L^p (G)$ the space of
(equivalence classes of) $p$-integrable functions and by
$L^p_{\text{loc}}(G)$ the space of (equivalence classes of) functions $f$
whose restrictions to compact set belong to  $L^p (G)$.

For any function $g$ on $G$, we define the functions $\tau_tg$ (with $t\in G$), $\widetilde{g}$ and $g^{\dagger}$ by
\[
(\tau_tg)(x):= g(x-t) \ts, \qquad \widetilde{g}(x) :=
\overline{g(-x)} \qquad \text{ and } \qquad g^{\dagger}(x):= g(-x)
\ts.
\]
Likewise, we define for any measure $\mu$ on $G$ the measures $\tau_t\mu$, $\widetilde{\mu}$ and $\mu^{\dagger}$ by
\[
(\tau_t\mu)(g):= \mu(\tau_{-t}g) \ts, \qquad \widetilde{\mu}(g) := \overline{\mu(\widetilde{g})} \qquad \text{ and } \qquad \mu^{\dagger}(g):= \mu(g^{\dagger}) \ts.
\]
Moreover, for a measure $\mu$ and function $\varphi\in \Cc (G)$, we define
the function $\mu\ast \varphi$ on $G$ by
\[
(\mu\ast \varphi)(t) =(\tau_{-t} \mu) (g^{\dagger}) \ts.
\]
For a measurable function $f$ on $G$ that is locally in $L^1(G)$, we
define $f\ast\varphi:=(f \theta_G) \ast\varphi$. Note that for us
a measure is a linear functional $\mu:\Cc(G)\to\CC$ such that, for
every compact set $K\subseteq G$, there is a constant $c_K>0$ with
\[
|\mu(\varphi)| \leqslant c_K \ts \|\varphi\|_{\infty}
\]
for every $\varphi\in\Cc(G)$ with $\operatorname{supp}(\varphi)\subseteq K$. As
usual, here  $\|\varphi\|_{\infty}:=\sup_{t\in G} |\varphi(t)|$ denotes the
supremum norm. Due to Riesz' representation theorem \cite{rud2}, our
concept of measures coincides with the concept of regular Radon
measures from measure theory. A measure $\mu$ is called positive if
$\mu(\varphi)\geqslant 0$ for all $\varphi \in \Cc (G)$ with $\varphi
\geqslant 0$. For every measure $\mu$ on $G$, there is a  unique positive
measure $|\mu|$ such that
\[
|\mu|(\varphi) = \sup\{ |\mu(\psi)| \ts :\ts \psi\in\Cc(G),\ts |\psi|\leqslant \varphi\}
\]
for every $\varphi\in\Cc(G)$ with $\varphi\geqslant0$, see \cite{Ped}. A measure
$\mu$ is \textit{translation bounded} if $\mu\ast \varphi$ is bounded
for any $\varphi \in \Cc (G)$. Equivalently, a measure $\mu$
is translation bounded if and only if
for any compact $K$ in $G$ with non-empty interior
\[
 \| \mu \|_{K}:=\sup \{|\mu|(t+K) : t\in G\}<\infty
\]
holds. Note that then $\mu \ast \varphi$ must belong to $\Cu (G)$
for any $\varphi \in \Cc (G)$. This is crucial as it will allow us
to consider and study translation bounded measures as - so to speak
- dual objects to $\Cu (G)$.  The set of all translation bounded
measures on $G$ is denoted by $\cM^\infty (G)$.

\begin{definition}[F{\o}lner and van Hove sequences]
A sequence $(A_n)_{n\in\NN}$ of precompact Borel subsets of $G$ of
positive measure is called a \textit{F{\o}lner sequence} sequence
if
\[
\lim_{n\to\infty} \frac{| A_n\ts \Delta\ts (t+A_n)|}{|A_n|}=0
\]
for all $t \in G$.
A sequence $(A_n)_{n\in\NN}$ of precompact open subsets of $G$ is called a \textit{van Hove} sequence if
\[
     \lim_{n\to\infty} \frac{|\partial^{K} A_{n}|}
     {|A_{n}|}  =  0 \ts ,
\]
for each compact set $K \subseteq G$, where the \textit{$K$-boundary $\partial^{K} A$} of an open set $A$ is defined as
\[
\partial^{K} A := \bigl( \overline{A+K}\setminus A\bigr) \cup
\bigl((\left(G \backslash A\right) - K)\cap \overline{A}\ts \bigr) \ts.
\]
\end{definition}

\begin{remark}
For our considerations below we will need to use van
Hove sequences (rather than just F\o lner sequences) in order to
establish the vanishing of boundary terms after cut-off. This will not
restrict the applicability of our theory as it  is known that every
$\sigma$-compact locally compact Abelian group $G$ admits van Hove
sequences \cite{Martin2}.  \exend
\end{remark}

\medskip

In the spirit of \cite{ARMA}, for $\varphi \in \Cc(G)$ and $\mu \in
\cM^\infty(G)$, we define
\[
\| \mu \|_{\varphi}:= \| \mu*\varphi \|_\infty \ts.
\]
The topology defined by this family of semi-norms on $\cM^\infty(G)$ is called the \textit{product topology for measures} and will be denoted by $\tau_{\operatorname{p}}$.
In this topology, a net $(\mu_\alpha)_{\alpha}$ converges to $\mu$ if and only if, for all $\varphi \in \Cc(G)$, the net $(\mu_\alpha*\varphi)_{\alpha}$ converges to $\mu*\varphi$ in $(\Cu(G), \| \cdot\|_\infty)$.
We will refer to the topology defined by the family of semi-norms $\{
\| \cdot \|_{\varphi*\psi} : \varphi, \psi \in \Cc(G)\}$ as the
\textit{semi-product topology for measures} and will denoted this
topology by $\tau_{\operatorname{sp}}$. This topology was introduced
and studied in \cite{LSS4}.

At the end of this section, let us review some common notions of
almost periodic functions and measures. Ultimately, these notions
are all stemming from the supremum norm $\|\cdot\|_\infty $ on $\Cu
(G)$.

\begin{definition}[Almost periodicity notions based on $(\Cu(G),\|\cdot\|_\infty)$]
A function $f\in \Cu(G)$ is called \textit{weakly (strongly) almost
periodic} if the closure of the set $\{\tau_tf\ts :\ts t\in G\}$ in
the weak (norm) topology of the Banach space  $(\Cu(G),\|\cdot\|_\infty)$ is compact. The space of weakly
(strongly) almost periodic functions is denoted by $W\!AP(G)$
($SAP(G)$).

A measure $\mu$ on $G$ is called \textit{weakly (strongly) almost
periodic} if $\mu*\varphi$ is a weakly (strongly) almost periodic
function for all $\varphi\in\Cc(G)$. We will denote the space of
weakly (strongly) almost periodic measures by $\mathcal{W\!AP}(G)$
($\mathcal{S\!AP}(G)$).

Let $K$ be a compact subset of $G$ that has a non-empty interior. A
measure $\mu$ is called \textit{norm almost periodic} if, for all
$\eps>0$, the set
\[
\{t\in G\ts :\ts \|\mu-\tau_t\mu\|_K <\eps\}
\]
is relatively dense in $G$.
\end{definition}

\subsection{The Besicovitch semi-norm and induced notions of almost
periodicity} By replacing the supremum norm $\|\cdot\|_\infty$ on
$\Cu (G)$  by other (semi-)norms we obtain further relevant notions
of almost periodicity. One of these is discussed in this section.

\medskip

Let $1\leqslant p <\infty$, and let $\mathcal{A}=(A_n)_{n\in\NN}$ be a van Hove sequence.
For $f\in L^p_{\text{loc}}(G)$, we define
\[
\|f\|_{b,p,\mathcal{A}} := \left(\limsup_{n\to\infty}
\frac{1}{|A_n|} \int_{A_n} |f(t)|^p \ \dd t \right)^{\frac{1}{p}}\in
[0,\infty] \ts.
\]
This defines a semi-norm on the space
\[
BL_{\cA}^p(G):=\{f\in L_{\text{loc}}^p(G)\ts :\ts
\|f\|_{b,p,\cA}<\infty\} \ts.
\]

This space has the following completeness property.

\begin{theorem}\cite[Thm.~1.18]{LSS} For $1 \leqslant p < \infty$, the space $(BL_{\cA}^p(G), \| \cdot \|_{b,p,\cA})$ is complete.  \qed
\end{theorem}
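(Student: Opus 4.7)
The plan is to show every Cauchy sequence has a limit. As usual, this reduces to the case of a rapidly Cauchy subsequence: extract $(g_k)_k$ with $\|g_{k+1}-g_k\|_{b,p,\cA}\leqslant 2^{-k}$, and, unpacking the $\limsup$ in the definition of the seminorm, choose a strictly increasing sequence of indices $N_k\to\infty$ such that
\[
\frac{1}{|A_n|}\int_{A_n}|g_{k+1}-g_k|^p\,\dd t \;\leqslant\; 2^{-kp+1}\quad\text{for every }n\geqslant N_k.
\]
This furnishes averaged $L^p$-bounds simultaneously valid on all sufficiently large van Hove sets.

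The main obstacle is the construction of a candidate limit $f$. The seminorm $\|\cdot\|_{b,p,\cA}$ is entirely global: any function in $L^p_{\text{loc}}(G)$ supported on a fixed compact set has vanishing seminorm, so $\|\cdot\|_{b,p,\cA}$ gives no local control. Consequently a sequence Cauchy in the seminorm need not converge pointwise a.e.\ or in $L^p_{\text{loc}}$, and a pointwise or local limit of $(g_k)$ is generally not available. I would therefore assemble $f$ piecewise along the van Hove exhaustion. After replacing $(A_{N_k})$ by a nested cofinal sub-sequence if necessary (which does not enlarge the seminorm), set $f:=g_1$ on $A_{N_1}$ and $f:=g_{k+1}$ on $A_{N_{k+1}}\setminus A_{N_k}$ for $k\geqslant 1$. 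This ties $f$ to the $g_k$ precisely where the averaged estimate from the preceding step is valid.

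To verify $\|f-g_k\|_{b,p,\cA}\to 0$, use that the annuli are pairwise disjoint to write
\[
\frac{1}{|A_n|}\int_{A_n}|f-g_k|^p\,\dd t \;=\; \sum_{j}\frac{1}{|A_n|}\int_{A_n\cap(A_{N_{j+1}}\setminus A_{N_j})}|g_{j+1}-g_k|^p\,\dd t,
\]
and telescope via $|g_{j+1}-g_k|\leqslant\sum_{l=k}^{j}|g_{l+1}-g_l|$ together with the estimate above, valid on $A_n$ as soon as $n\geqslant N_l$. The tail from $j\geqslant k$ is then bounded by $C\cdot 2^{-kp}$, while the contribution from the finitely many $j<k$ vanishes in the $\limsup$ because $|A_{N_j}|$ is fixed while $|A_n|\to\infty$. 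This yields $\|f-g_k\|_{b,p,\cA}\leqslant C'\cdot 2^{-k}$, so $f\in BL^p_\cA(G)$ and $f_n\to f$ in the seminorm. The crux of the argument is the construction of $f$: since the seminorm provides no local compactness, $f$ cannot be extracted as a pointwise or local limit, but must instead be patched geometrically from the $g_k$ along the van Hove exhaustion, exploiting the specific choice of indices $N_k$ to connect this geometric data to the averaged control.
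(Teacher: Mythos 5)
The paper does not actually prove this statement; it is imported from \cite[Thm.~1.18]{LSS}, so your argument has to stand on its own. Your overall strategy is the right one: pass to a rapidly Cauchy subsequence, fix thresholds $N_k$ beyond which the averaged estimates hold for \emph{every} $n\geqslant N_k$, and assemble the candidate limit geometrically rather than as a pointwise or $L^p_{\text{loc}}$ limit (your diagnosis that the seminorm gives no local control is exactly correct). The problem is the patching set. You define $f$ only on $\bigcup_k A_{N_k}$, after ``replacing $(A_{N_k})$ by a nested cofinal subsequence''. First, a van Hove sequence as defined in this paper need not admit any nested subsequence: for instance $A_n=[n^2,n^2+n]$ in $\RR$ is a van Hove sequence with pairwise disjoint members. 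Second, and independently, the seminorm is a $\limsup$ over \emph{all} $n$, while your displayed identity
\[
\frac{1}{|A_n|}\int_{A_n}|f-g_k|^p\ \dd t \;=\; \sum_{j}\frac{1}{|A_n|}\int_{A_n\cap(A_{N_{j+1}}\setminus A_{N_j})}|g_{j+1}-g_k|^p\ \dd t
\]
presupposes $A_n\subseteq\bigcup_j A_{N_j}$. For the infinitely many $n\notin\{N_k\}$ the set $A_n$ may be (almost) disjoint from $\bigcup_k A_{N_k}$; there $f$ is undefined or $0$, the left-hand side acquires the uncontrolled term $|A_n|^{-1}\int_{A_n\setminus\bigcup_j A_{N_j}}|g_k|^p\,\dd t$, and the $\limsup$ over all $n$ is not small. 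So the construction, as written, fails for general van Hove sequences.

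The repair is to patch along \emph{all} the van Hove sets rather than along a subsequence: put $E_1:=\bigcup_{n<N_2}A_n$ and $E_j:=\bigcup_{N_j\leqslant n<N_{j+1}}A_n\setminus\bigcup_{i<j}E_i$, and set $f:=g_j$ on $E_j$, $f:=0$ elsewhere. Then every $A_n$ with $N_j\leqslant n<N_{j+1}$ is contained in $E_1\cup\dots\cup E_j$; the pieces $A_n\cap E_i$ with $i\leqslant k$ lie in the fixed precompact set $\bigcup_{m<N_{k+1}}A_m$ and contribute $O(1/|A_n|)$, while on the remaining pieces one has the pointwise bound $|f-g_k|\leqslant\sum_{l=k}^{j-1}|g_{l+1}-g_l|$, and Minkowski's inequality on $(A_n,|A_n|^{-1}\dd t)$ gives the tail bound $C\,2^{-kp}$. (Note that for $p>1$ you genuinely need Minkowski here; simply adding the individual estimates $2^{-kp+1}$ bounds the sum of the $p$-th powers, not the $p$-th power of the sum.) Finally, one must still check that the patched $f$ lies in $L^p_{\text{loc}}(G)$: a compact set can meet infinitely many $E_j$, and since the Besicovitch seminorm sees nothing on a fixed compact set, the $g_j$ can be wildly large there; this point requires an additional argument (or a further adjustment of the construction) and should not be passed over silently.
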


The norm $\|\cdot\|_{b,p,\cA}$ is not  invariant under translations.
To remedy this, we follow \cite{LSS} by mostly working in
$BC_{\cA}^p(G)$, which is  the closure of $\Cu(G)$ in
$BL_{\cA}^p(G)$.

A simple direct computation (see \cite{LSS4}\label{lem:5} as well)
gives the following statement.

\begin{lemma}
Let $f \in \Cu(G)$ and $\varphi \in \Cc(G)$. Then, one has
\[
\| f*\varphi \|_{b,1,\cA}  \leqslant  \|\varphi\|_{1}\| f \|_{b,1,\cA}  \ts.  \tag*{\qed}
\]
\end{lemma}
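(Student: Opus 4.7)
The plan is to reduce the estimate to a pointwise bound on the convolution and then exploit the van Hove property to handle the translates of $A_n$ that appear. First I would unfold the convolution, which with the paper's conventions gives $(f\ast\varphi)(t) = \int_G f(t-s)\varphi(s)\,ds$, so that the triangle inequality yields $|(f\ast\varphi)(t)| \leq \int_G |f(t-s)||\varphi(s)|\,ds$. Integrating over $A_n$ and applying Fubini (justified since $\varphi$ is compactly supported and $f$ is bounded), followed by the change of variables $u = t - s$, produces
\[
\int_{A_n} |(f\ast\varphi)(t)|\,dt \;\leq\; \int_G |\varphi(s)| \int_{A_n - s} |f(u)|\,du\,ds.
\]

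The next step is to compare $\int_{A_n - s} |f|$ with $\int_{A_n} |f|$ for $s$ in the compact set $K := \supp \varphi$. Splitting $A_n - s$ as the disjoint union of $(A_n - s) \cap A_n$ and $(A_n - s) \setminus A_n$, and bounding $|f| \leq \|f\|_{\infty}$ on the boundary piece (which is available because $f \in \Cu(G) \subseteq L^\infty(G)$), gives
\[
\int_{A_n - s} |f| \;\leq\; \int_{A_n} |f| \;+\; \|f\|_{\infty}\,|(A_n - s) \triangle A_n|.
\]
Dividing through by $|A_n|$ and inserting this into the Fubini bound produces the desired factor $\|\varphi\|_1 \cdot \frac{1}{|A_n|}\int_{A_n}|f|$ plus an error term $\|f\|_{\infty}\|\varphi\|_1 \cdot \sup_{s\in K}\frac{|(A_n-s)\triangle A_n|}{|A_n|}$.

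The main obstacle is controlling this error term uniformly in $s \in K$, and this is exactly where the van Hove hypothesis (as opposed to mere F\o lner) is used. A direct unpacking of the definition of $\partial^{K'} A_n$ shows that $(A_n - s) \triangle A_n \subseteq \partial^{K \cup (-K)} A_n$ for every $s \in K$, so
\[
\sup_{s \in K} \frac{|(A_n - s) \triangle A_n|}{|A_n|} \;\leq\; \frac{|\partial^{K \cup (-K)} A_n|}{|A_n|} \;\xrightarrow[n\to\infty]{}\; 0.
\]
Combining this uniform control with the previous bound, taking $\limsup_{n \to \infty}$ (using $\|\varphi\|_1 < \infty$ to pass the limsup inside the $s$-integral against $|\varphi(s)|\,ds$), yields $\|f \ast \varphi\|_{b,1,\cA} \leq \|\varphi\|_1 \|f\|_{b,1,\cA}$, as claimed.
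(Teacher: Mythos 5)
Your argument is correct: the Fubini--Tonelli step, the splitting of $\int_{A_n-s}|f|$ into $\int_{A_n}|f|$ plus a boundary term bounded by $\|f\|_\infty\,|(A_n-s)\,\Delta\,A_n|$, and the inclusion $(A_n-s)\,\Delta\,A_n\subseteq\partial^{K\cup(-K)}A_n$ for $s\in\operatorname{supp}\varphi$ all check out, and together they give the stated bound. The paper omits the proof, describing it as ``a simple direct computation'' (deferring to \cite{LSS4}), and your write-up is precisely that computation carried out in full, so there is nothing further to compare.
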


The semi-norm $\|\cdot\|_{b,1,\cA}$ is needed in order to define mean almost periodic functions.

\begin{definition}[Mean almost periodic functions]\cite{LSS}
Let $\cA$ be a van Hove sequence in $G$. A function $f\in \Cu(G)$ is called \textit{mean almost periodic} (with respect to $\cA$) if, for each $\eps>0$, the set
\[
\{t\in G\ts :\ts \|f-\tau_t f\|_{b,1,\cA}<\eps \}
\]
is relatively dense. We denote the set of mean almost periodic functions by $\operatorname{MAP}_{\cA}(G)$, or simply $\operatorname{MAP}(G)$.
\end{definition}

The set $\operatorname{MAP}(G)$ is stable under convolution with elements from $\Cc(G)$, which immediately follows from Lemma~\ref{lem:5}.

\begin{coro}
Let $f \in \operatorname{MAP}(G)$ and $\varphi \in \Cc(G)$. Then,
$f*\varphi \in \operatorname{MAP}(G)$. \qed
\end{coro}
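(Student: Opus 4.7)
The plan is to simply chase the definition, using the translation-invariance of convolution together with Lemma~\ref{lem:5}. First, note that since $f \in \operatorname{MAP}(G) \subseteq \Cu(G)$ and $\varphi \in \Cc(G)$, the convolution $f*\varphi$ lies in $\Cu(G)$, so membership in the ambient space is not an issue. Also, without loss of generality $\varphi \neq 0$, for otherwise there is nothing to prove.

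The core identity is translation invariance of convolution: for any $t \in G$,
\[
\tau_t (f*\varphi) = (\tau_t f)*\varphi ,
\]
and hence
\[
(f*\varphi) - \tau_t (f*\varphi) = (f - \tau_t f)*\varphi .
\]
Applying Lemma~\ref{lem:5} to the uniformly continuous bounded function $f - \tau_t f \in \Cu(G)$ yields
\[
\| (f*\varphi) - \tau_t (f*\varphi)\|_{b,1,\cA} \leqslant \|\varphi\|_1 \ts \|f - \tau_t f\|_{b,1,\cA} .
\]

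Given $\eps>0$, set $\eps' := \eps / \|\varphi\|_1$. Since $f \in \operatorname{MAP}(G)$, the set $\{t \in G : \|f - \tau_t f\|_{b,1,\cA} < \eps'\}$ is relatively dense in $G$, and by the displayed inequality it is contained in $\{t \in G : \|(f*\varphi) - \tau_t (f*\varphi)\|_{b,1,\cA} < \eps\}$. Hence the latter set is relatively dense as well, proving $f*\varphi \in \operatorname{MAP}(G)$. There is no real obstacle here; the entire argument is essentially just the observation that convolution commutes with translation combined with the continuity estimate already recorded in Lemma~\ref{lem:5}.
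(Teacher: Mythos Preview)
Your proof is correct and is exactly the argument the paper has in mind: the corollary is stated without proof, merely noting that it ``immediately follows from Lemma~\ref{lem:5}'', and your write-up spells out precisely that deduction via $\tau_t(f*\varphi)=(\tau_t f)*\varphi$ together with the norm estimate.
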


\begin{definition}[$p$-mean almost periodic functions]
Let $\cA$ be a van Hove sequence, and let $1\leqslant p < \infty$. We denote by $\operatorname{Map}_{\cA}^p(G)$ the closure of $\operatorname{MAP}_{\cA}(G)$ in $BC_{\cA}^p(G)$ with respect to $\|\cdot \|_{b,p,\cA}$. The elements in $\operatorname{Map}_{\cA}^p(G)$ are called \textit{$p$-mean almost periodic}.

A measure $\mu$ on $G$ is called \textit{$p$-mean almost periodic} (with respect to $\cA$) if
\[
\mu*\varphi\in \operatorname{Map}_{\cA}^p(G) \qquad \text{ for all } \varphi\in\Cc(G) \ts.
\]
The space of all \textit{$p$-mean almost periodic} measures is denoted by $\cM\texttt{ap}_{\cA}^p(G)$. For $p=1$, we simply write $\cM\texttt{ap}_{\cA}(G)$ or $\cM\texttt{ap}(G)$.
\end{definition}

\begin{prop}
Let $\mu\in\mathcal{M}^{\infty}(G)$. Then, the following statements
are equivalent:
\begin{itemize}
\item[(i)] $\mu\in\cM\mathtt{ap}_{\cA}(G)$

\item[(ii)]  $\mu*\varphi *\psi \in\operatorname{MAP}_{\cA}(G) \ \text{ for all
} \varphi,\psi \in \Cc (G)$.
\end{itemize}
\end{prop}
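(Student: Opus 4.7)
The plan is to establish both implications through standard approximation arguments in the Besicovitch semi-norm, with sup-norm convergence on one side and an $\eps/2$-triangle argument on the other.

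\emph{For $(ii)\Rightarrow (i)$:} Fix $\varphi\in\Cc(G)$ and choose an approximate identity $(\psi_n)$ in $\Cc(G)$. Since $\mu\in\cM^{\infty}(G)$, the function $\mu*\varphi$ lies in $\Cu(G)$, and standard approximate-identity estimates (using uniform continuity) give $\mu*\varphi*\psi_n \to \mu*\varphi$ in sup-norm. As $\|g\|_{b,1,\cA}\leqslant \|g\|_\infty$ for every bounded $g$, the convergence also holds in the Besicovitch semi-norm. By hypothesis, each $\mu*\varphi*\psi_n$ already belongs to $\operatorname{MAP}_\cA(G)$, so $\mu*\varphi$ lies in its $\|\cdot\|_{b,1,\cA}$-closure, which is precisely $\operatorname{Map}_\cA^1(G)$. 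Hence $\mu\in\cM\mathtt{ap}_\cA(G)$.

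\emph{For $(i)\Rightarrow (ii)$:} Fix $\varphi,\psi\in\Cc(G)$. By definition of $\operatorname{Map}_\cA^1(G)$, there is a sequence $(f_n)\subset \operatorname{MAP}_\cA(G)$ with $\|\mu*\varphi - f_n\|_{b,1,\cA}\to 0$. The function $\mu*\varphi*\psi$ belongs to $\Cu(G)$, and to show it lies in $\operatorname{MAP}_\cA(G)$ I need to verify that, for every $\eps>0$, the set
$$T_\eps := \{t\in G : \|\mu*\varphi*\psi - \tau_t(\mu*\varphi*\psi)\|_{b,1,\cA}<\eps\}$$
is relatively dense. The identity $(\tau_t h)*\psi = \tau_t(h*\psi)$, applied with $h=\mu*\varphi$, together with Lemma~\ref{lem:5} yields
$$\|\mu*\varphi*\psi - \tau_t(\mu*\varphi*\psi)\|_{b,1,\cA} \leqslant \|\psi\|_1\, \|\mu*\varphi - \tau_t(\mu*\varphi)\|_{b,1,\cA}.$$
Inserting $\pm f_n$, using the triangle inequality and the translation invariance of $\|\cdot\|_{b,1,\cA}$ on $BC_\cA^1(G)$, I bound the right-hand side by $\|\psi\|_1\bigl(2\|\mu*\varphi - f_n\|_{b,1,\cA} + \|f_n - \tau_t f_n\|_{b,1,\cA}\bigr)$. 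Given $\eps>0$, first pick $n$ large enough that the first summand is below $\eps/2$; then appeal to the mean almost periodicity of $f_n$ to obtain a relatively dense set of $t$ for which the second summand lies below $\eps/2$. This set is contained in $T_\eps$.

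The main (minor) technical obstacle is justifying the translation invariance of $\|\cdot\|_{b,1,\cA}$ on $\Cu(G)\subset BC_\cA^1(G)$ used above; this is precisely the reason the authors restrict to $BC_\cA^p(G)$ rather than working in the whole space $BL_\cA^p(G)$. For bounded $f$, the difference between its averages over $A_n$ and $A_n-t$ is dominated by $\|f\|_\infty\, |A_n\Delta (A_n-t)|/|A_n|$, which vanishes by the van Hove property; the statement then extends to the closure $BC_\cA^1(G)$ by density.
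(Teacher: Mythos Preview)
Your proof is correct and follows essentially the same route as the paper. For $(ii)\Rightarrow(i)$ both arguments use an approximate identity; the paper then invokes \cite[Thm.~2.8]{LSS} to get the slightly stronger conclusion $\mu*\varphi\in\operatorname{MAP}_{\cA}(G)$, whereas your weaker conclusion $\mu*\varphi\in\operatorname{Map}_{\cA}^1(G)$ already suffices by definition. For $(i)\Rightarrow(ii)$ the paper argues in one line (since $\varphi*\psi\in\Cc(G)$), implicitly using the equality $\operatorname{Map}_{\cA}^1(G)\cap\Cu(G)=\operatorname{MAP}_{\cA}(G)$ from \cite{LSS}; your $\eps/2$-argument with the inserted $f_n$ is precisely a self-contained proof of that fact.
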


\begin{proof}
If $\mu\in\cM\mathtt{ap}_{\cA}(G)$, then $\mu*\varphi*\psi
\in\operatorname{MAP}_{\cA}(G)$ because $\varphi*\psi\in
C_{\text{c}}(G)$.

Conversely, replacing $\psi$ by elements of an approximate identity
$(\phi_{\alpha})_{\alpha}$, we observe that $\{\mu *\varphi *
\phi_{\alpha}\ts :\ts \alpha\}\subseteq \operatorname{MAP}_{\cA}(G)$.
Thus, we have
\[
\mu * \varphi = \lim_{\alpha}\ts (\mu * \varphi * \phi_{\alpha})\in \operatorname{MAP}_{\cA}(G)
\]
by \cite[Thm.~2.8]{LSS}.
\end{proof}

\medskip

As consequence  we obtain the following result.

\begin{prop}
$\cM\mathtt{ap}_{\cA}(G)$ is closed in $\cM^\infty(G)$ with respect to the product topology.
\end{prop}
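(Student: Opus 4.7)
The plan is to exploit the definition of $\cM\mathtt{ap}_{\cA}(G)$ together with the fact that $\operatorname{Map}_{\cA}^1(G)$ is closed in $BC_{\cA}^1(G)$ by its very construction as a $\|\cdot\|_{b,1,\cA}$-closure. The key idea is to transfer the uniform convergence of convolutions (which is what $\tau_{\operatorname{p}}$ encodes) to convergence in the Besicovitch seminorm, via the trivial estimate $\|\cdot\|_{b,1,\cA}\leqslant \|\cdot\|_\infty$ on $\Cu(G)$.

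Concretely, I would fix a net $(\mu_\alpha)_\alpha$ in $\cM\mathtt{ap}_{\cA}(G)$ with $\mu_\alpha \to \mu \in \cM^\infty(G)$ in $\tau_{\operatorname{p}}$, and then, for each $\varphi \in \Cc(G)$, verify that $\mu*\varphi \in \operatorname{Map}_{\cA}^1(G)$. By the definition of $\tau_{\operatorname{p}}$, the net $(\mu_\alpha*\varphi)_\alpha$ converges to $\mu*\varphi$ in $(\Cu(G),\|\cdot\|_\infty)$. Using
\[
\|g\|_{b,1,\cA} \;=\; \limsup_{n\to\infty}\frac{1}{|A_n|}\int_{A_n}|g(t)|\,\dd t \;\leqslant\; \|g\|_\infty
\]
for all $g \in \Cu(G)$, I would then conclude that $\|\mu_\alpha*\varphi - \mu*\varphi\|_{b,1,\cA}\to 0$ as well.

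Since each $\mu_\alpha*\varphi \in \operatorname{Map}_{\cA}^1(G)$ by hypothesis, and $\operatorname{Map}_{\cA}^1(G)$ is closed in $BC_{\cA}^1(G)$ with respect to $\|\cdot\|_{b,1,\cA}$ by construction, and $\mu*\varphi \in \Cu(G) \subseteq BC_{\cA}^1(G)$, the Besicovitch-limit $\mu*\varphi$ belongs to $\operatorname{Map}_{\cA}^1(G)$. Since $\varphi$ was arbitrary, this gives $\mu \in \cM\mathtt{ap}_{\cA}(G)$, proving closedness.

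There is no genuine obstacle in this argument: the proof is essentially a definition-chase, with the only care needed being that $\tau_{\operatorname{p}}$ is a priori not metrizable, so one works with nets rather than sequences, and one must keep straight the direction of the seminorm comparison $\|\cdot\|_{b,1,\cA}\leqslant \|\cdot\|_\infty$ (and note that the previous proposition is not even needed for this implication).
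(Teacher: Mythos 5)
Your proof is correct, and it follows the same skeleton as the paper's: fix a net $(\mu_\alpha)_\alpha$ converging in $\tau_{\operatorname{p}}$, pass to the uniform convergence $\mu_\alpha*\varphi \to \mu*\varphi$ in $(\Cu(G),\|\cdot\|_\infty)$ for each $\varphi\in\Cc(G)$, and invoke closedness of the relevant function class. The difference lies in which closedness fact carries the weight. The paper places $\mu_\alpha*\varphi$ in $\operatorname{MAP}_{\cA}(G)$ itself and uses that $\operatorname{MAP}_{\cA}(G)$ is $\|\cdot\|_\infty$-closed in $\Cu(G)$ (a result imported from the reference \cite{LSS}, together with the preceding proposition to justify that the convolutions land in $\operatorname{MAP}_{\cA}(G)$ rather than merely in its Besicovitch closure). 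You instead work directly with the definition: $\mu_\alpha*\varphi \in \operatorname{Map}_{\cA}^1(G)$, the elementary comparison $\|\cdot\|_{b,1,\cA}\leqslant\|\cdot\|_\infty$ on $\Cu(G)$ to downgrade uniform convergence to Besicovitch convergence, and the fact that $\operatorname{Map}_{\cA}^1(G)$ is $\|\cdot\|_{b,1,\cA}$-closed by its very construction as a closure. This buys you a more self-contained argument that needs no external closedness theorem and sidesteps the identification $\operatorname{Map}_{\cA}^1(G)\cap\Cu(G)=\operatorname{MAP}_{\cA}(G)$ that the paper's phrasing implicitly relies on; the paper's version, in exchange, yields the slightly stronger conclusion $\mu*\varphi\in\operatorname{MAP}_{\cA}(G)$. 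One small point worth making explicit in your write-up: $\|\cdot\|_{b,1,\cA}$ is only a seminorm, so the induced topology is not Hausdorff; this is harmless here, since a closed set in any topological space still contains the limits of all its convergent nets, which is all you use.
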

\begin{proof}
Let $(\mu_\alpha)_{\alpha}$ be a net in $\cM\mathtt{ap}_{\cA}(G)$ which converges in the product topology to some $\mu \in \cM^\infty(G)$.

Since $\mu_\alpha \in \cM\mathtt{ap}_{\cA}(G)$, we have $\mu_\alpha*\varphi \in \operatorname{MAP}_{\cA}(G)$ for all $\varphi \in \Cc(G)$. Therefore, since $\operatorname{MAP}_{\cA}(G)$ is closed in $(\Cu(G), \| \cdot\|_\infty)$, we get $\mu*\varphi \in \operatorname{MAP}_{\cA}(G)$, and hence $\mu \in \cM\mathtt{ap}_{\cA}(G)$.
\end{proof}

\subsection{Fourier--Bohr coefficients}

We start with the definition.

\begin{definition}[Fourier--Bohr coefficients]
Let $\cA$ be a fixed van Hove sequence and $\chi \in \widehat{G}$.
We say that $f \in L^1_{\text{loc}}(G)$ has a well defined
\textit{Fourier--Bohr coefficient} (at $\chi$) with respect to $\cA$
if the following limit exists
\[
a_{\chi}^\cA(f):= \lim_{n\to\infty} \frac{1}{|A_n|} \int_{A_n} f(s)\ts \overline{\chi(s)}\ \dd s \ts.
\]
Similarly, we say that a measure $\mu$ has a well defined
\textit{Fourier--Bohr coefficient} (at $\chi$) with respect to $\cA$
if the following limit exists
\[
a_{\chi}^\cA(\mu):= \lim_{n\to\infty} \frac{1}{|A_n|} \int_{A_n} \overline{\chi(s)}\ \dd \mu(s) \ts,
\]
\end{definition}

The Fourier--Bohr coefficients exist for Besicovitch almost periodic functions and measures \cite{LSS}.

When $\chi=1$ is the trivial character, the Fourier--Bohr coefficient $a_{1}^\cA(f)$ is usually called the \emph{mean of $f$}  and also denoted by $M_{\cA}(f)$.
That is, the mean $M_{\cA}(f)$ is given by
\[
M_{\cA}(f)=\lim_{n\to\infty} \frac{1}{|A_n|} \int_{A_n} f(s) \dd s \ts,
\]
if the limit exists.










\section{The (twisted) Eberlein convolution for functions}
In this section, we review the Eberlein convolution and define the
twisted Eberlein convolution of functions and study their
properties. While the twisted Eberlein convolution differs from the
Eberlein convolution only slightly, it will be shown to behave
better in various respects.

\medskip

Let us start by recalling the known definition of the Eberlein
convolution.  Let  $f, g :G \to \CC$  be measurable functions and
$\cA$ a van Hove sequence. If the limit
\[
(f\ebeA g)(t):= \lim_{n\to\infty} \frac{1}{|A_n|} \int_{A_n}
f(s)\ts g(t-s) \ \dd s
\]
exists for all $t \in G$, we call the function  $f \ebeA g$ on $G$
the \textit{Eberlein convolution} of $f$ and $g$ and say that the
Eberlein convolution of $f$ and $g$ exists.

The twisted Eberlein convolution will be defined by a slight variant
of this definition.

\begin{definition}[Twisted Eberlein convolution]
Let measurable $f, g :G \to \CC$  and a van Hove sequence $\cA$ on
$G$ be given. If the limit
\[
\leftbrac f , g \rightbrac_{\cA}(t):= \lim_{n\to\infty}
\frac{1}{|A_n|} \int_{A_n} f(s)\ts\overline{g(s-t)} \ \dd s =
M_{\cA}(f \overline{\tau_t g})
\]
exists for all $t \in G$ we call the function $\leftbrac f , g
\rightbrac_{\cA}$ on $G$
 the \textit{twisted Eberlein convolution} of $f$ and $g$ and say that the twisted Eberlein convolution of $f$ and $g$ exists.
\end{definition}

\begin{remark}[Relating Eberlein convolution and twisted Eberlein
convolution]
The twisted Eberlein convolution $\leftbrac f , g \rightbrac_{\cA}$
exists if and only if $f \ebeA \widetilde{g}$ exists. Moreover, in
this case $\leftbrac f , g \rightbrac_{\cA}= f \ebeA \widetilde{g}$.  \exend
\end{remark}

We now look at properties of the (twisted) Eberlein convolution.
From the definition we immediately obtain the following connection
to the Besicovitch norm.

\begin{lemma}[Besicovitch norm via twisted Eberlein convolution]
Let $f\in L^\infty (G)$ be given and assume that $\leftbrac f , f
\rightbrac_{\cA}$ exists. Then,
\[
\leftbrac f , f \rightbrac_{\cA}(0) =  \| f \|_{b, 2, \cA}^2
\]
holds.  \qed
\end{lemma}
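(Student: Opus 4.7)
The plan is essentially just to unfold the definitions, since the statement is little more than a restatement of what the Besicovitch semi-norm measures in the special case $t=0$, $g=f$.

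First I would set $t=0$ in the definition of the twisted Eberlein convolution. With $g=f$ and $t=0$ one has $\overline{f(s-0)}=\overline{f(s)}$, and therefore $f(s)\overline{f(s-t)}=|f(s)|^2$. By hypothesis the twisted Eberlein convolution exists, so in particular the limit at $0$ exists and equals
\[
\leftbrac f, f \rightbrac_{\cA}(0)=\lim_{n\to\infty}\frac{1}{|A_n|}\int_{A_n}|f(s)|^2 \, \dd s.
\]
The assumption $f\in L^\infty(G)$ ensures $|f|^2\in L^\infty(G)\subseteq L^1_{\text{loc}}(G)$, so these integrals are well defined.

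Second, I would compare this with the definition of $\|\cdot\|_{b,2,\cA}$. That definition uses a $\limsup$, but once a limit is known to exist, the $\limsup$ equals that limit. Hence
\[
\|f\|_{b,2,\cA}^2=\limsup_{n\to\infty}\frac{1}{|A_n|}\int_{A_n}|f(s)|^2 \, \dd s=\lim_{n\to\infty}\frac{1}{|A_n|}\int_{A_n}|f(s)|^2 \, \dd s,
\]
which matches the expression for $\leftbrac f, f \rightbrac_{\cA}(0)$ obtained above. Combining the two equalities yields the claim.

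There is no real obstacle; the only subtlety worth flagging is the $\limsup$ versus $\lim$ distinction in the semi-norm, which is why the hypothesis ``$\leftbrac f, f \rightbrac_{\cA}$ exists'' is what lets the identification go through.
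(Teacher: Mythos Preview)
Your proof is correct and matches the paper's approach: the paper presents this lemma as an immediate consequence of the definitions (it is stated with a \qed and no separate proof), and your unfolding of the definitions together with the observation that the existing limit coincides with the $\limsup$ is exactly what is intended.
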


We now consider how the (twisted) Eberlein convolution behaves under
interchanging the arguments. We start by looking at the Eberlein
convolution.  For $f,g \in \Cu(G)$ the following result is proven in
\cite{BS}. Since the proof is identical in the general case, we skip
it.

\begin{lemma}\label{lem:ebe comm}
Let $f,g \in L^\infty(G)$ be such that $f\ebeA g$ is well defined. Then $g \circledast_{-\cA} f$ is well defined and
\[
g \circledast_{-\cA} f =f \ebeA g \ts.  \tag*{\qed}
\]
\end{lemma}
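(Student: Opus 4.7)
The plan is to relate the two defining limits via a pair of elementary substitutions and absorb the resulting mismatch in the averaging window using the F\o lner-type property of $\cA$. Starting from the definition, and using that Haar measure on an abelian group is inversion-invariant (so in particular $|-A_n|=|A_n|$), the substitution $s\mapsto -s$ gives
\[
\frac{1}{|{-}A_n|}\int_{-A_n} g(s)\,f(t-s)\,\dd s \;=\; \frac{1}{|A_n|}\int_{A_n} g(-s)\,f(t+s)\,\dd s.
\]
A further substitution $u=t+s$ rewrites this as
\[
\frac{1}{|A_n|}\int_{t+A_n} f(u)\,g(t-u)\,\dd u,
\]
which is the $n$-th Ces\`aro approximant of $(f\ebeA g)(t)$, except averaged over the translate $t+A_n$ rather than $A_n$.

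Next, I would show that replacing $t+A_n$ by $A_n$ is harmless in the limit. The difference
\[
\frac{1}{|A_n|}\int_{t+A_n} f(u)\,g(t-u)\,\dd u \;-\; \frac{1}{|A_n|}\int_{A_n} f(u)\,g(t-u)\,\dd u
\]
is an integral over the symmetric difference $(t+A_n)\,\Delta\,A_n$ (with appropriate signs), and since $f,g\in L^\infty(G)$, its absolute value is bounded by
\[
\|f\|_\infty\|g\|_\infty\,\frac{|A_n\,\Delta\,(t+A_n)|}{|A_n|}.
\]
Every van Hove sequence is a F\o lner sequence, so this bound tends to $0$ as $n\to\infty$ for each fixed $t\in G$. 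Consequently, existence of $(f\ebeA g)(t)=\lim_n \frac{1}{|A_n|}\int_{A_n} f(u)g(t-u)\,\dd u$ forces existence of the limit defining $(g \circledast_{-\cA} f)(t)$, and the two values coincide for every $t$, which is the claim.

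No step here is a genuine obstacle; this is essentially bookkeeping. The only point that requires a bit of care is that the two substitutions must be applied in the right order so that the averaging region ends up as a single translate of $A_n$ (rather than, say, $t-A_n$), ensuring the F\o lner estimate can be invoked at the end.
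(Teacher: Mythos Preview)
Your argument is correct and is exactly the standard one: two changes of variable reduce the $(-\cA)$-average to the $\cA$-average over a translated window, and the F\o lner property absorbs the translate since $f,g\in L^\infty(G)$. The paper does not actually give a proof here (it refers to \cite{BS} and notes the argument is identical), but the same F\o lner-difference estimate you use appears verbatim in the paper's proof of the next lemma on translates, so your approach matches the intended one.
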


The corresponding result for the twisted version of the Eberlein
convolution shows the advantage of the twisted Eberlein convolution
as it does not require to replace the sequence $\cA$ by $-\cA$.

\begin{lemma}[Conjugate symmetry of twisted Eberlein convolution]
Let $f,g \in L^\infty(G)$ be such that $\leftbrac f , g
\rightbrac_{\cA}$ exists. Then, $\leftbrac g , f \rightbrac_{\cA}$
exists as well and
\begin{equation}\label{eq tebe twist com}
\leftbrac g , f \rightbrac_{\cA} = \widetilde{\leftbrac f , g
\rightbrac_{\cA}} \ts.
\end{equation}
\end{lemma}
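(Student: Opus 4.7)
The plan is to unwind both sides to integrals over $A_n$ and show that, after a translation of the variable of integration, they differ only by an integral over a symmetric difference that becomes negligible thanks to the van Hove (Følner) property.

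First I would write out the candidate right-hand side: by definition of the $\widetilde{\cdot}$ operation,
\[
\widetilde{\leftbrac f , g \rightbrac_{\cA}}(t)
= \overline{\leftbrac f , g \rightbrac_{\cA}(-t)}
= \overline{\lim_{n\to\infty} \frac{1}{|A_n|} \int_{A_n} f(s)\ts \overline{g(s+t)} \ \dd s}
= \lim_{n\to\infty} \frac{1}{|A_n|} \int_{A_n} \overline{f(s)}\ts g(s+t) \ \dd s,
\]
which exists by hypothesis.

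Next, I would compute the left-hand side by the change of variable $u = s - t$ in the defining integral for $\leftbrac g , f \rightbrac_{\cA}(t)$:
\[
\frac{1}{|A_n|} \int_{A_n} g(s)\ts \overline{f(s-t)} \ \dd s
= \frac{1}{|A_n|} \int_{A_n - t} g(u+t)\ts \overline{f(u)} \ \dd u.
\]
Comparing with the expression for $\widetilde{\leftbrac f , g \rightbrac_{\cA}}(t)$, the two integrals differ only by the region of integration, and
\[
\left| \frac{1}{|A_n|} \int_{A_n - t} g(u+t)\ts \overline{f(u)} \ \dd u - \frac{1}{|A_n|} \int_{A_n} g(u+t)\ts \overline{f(u)} \ \dd u \right|
\leqslant \|f\|_{\infty}\|g\|_{\infty}\ts \frac{|A_n \ts \Delta\ts (A_n - t)|}{|A_n|}.
\]
The key step is that the van Hove sequence $\cA$ is in particular a F\o lner sequence (with respect to any fixed $t \in G$), so the right-hand side tends to $0$ as $n\to\infty$.

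Finally, combining the two observations, the limit defining $\leftbrac g , f \rightbrac_{\cA}(t)$ exists and coincides with $\widetilde{\leftbrac f , g \rightbrac_{\cA}}(t)$ for every $t \in G$, yielding \eqref{eq tebe twist com}. The only potential obstacle is the translation of the averaging region, and this is immediately controlled by the $L^\infty$ bounds on $f,g$ together with the F\o lner property; no additional regularity of $f$ or $g$ is needed.
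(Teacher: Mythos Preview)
Your proof is correct. It differs from the paper's approach in a minor but genuine way: the paper first invokes the commutativity lemma for the ordinary Eberlein convolution (Lemma~\ref{lem:ebe comm}), which passes from $\cA$ to $-\cA$, and then uses the substitution $u=-s$ to return to integrals over $A_n$; the F{\o}lner argument is thus hidden inside that earlier lemma. You instead apply the F{\o}lner property directly, comparing the integrals over $A_n$ and $A_n - t$ via the substitution $u=s-t$. Your route is more self-contained and avoids the detour through $-\cA$ entirely, at the cost of not exhibiting the link to the untwisted convolution; the paper's route makes that link explicit but relies on an auxiliary result whose proof is essentially the same F{\o}lner estimate you wrote out.
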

\begin{proof}
Since $f \ebeA \widetilde{g}= \leftbrac f , g \rightbrac_{\cA}$ is
well defined, so is $\widetilde{g} \circledast_{-\cA} f$ by
Lemma~\ref{lem:ebe comm}. Therefore, the following limit exists for
all $t \in G$:
\begin{align*}
\overline{(f \ebeA \widetilde{g}) (-t)}
    &=\overline{\widetilde{g} \circledast_{-\cA} f (-t)}
     =  \lim_{n\to\infty} \frac{1}{|A_n|} \overline{ \int_{-A_n} \widetilde{g}(s)\ts f(-t-s) \ \dd s} \\
    &=  \lim_{n\to\infty} \frac{1}{|A_n|} \int_{-A_n} g(-s)\ts\overline{f(-t-s)} \ \dd s
     =  \lim_{n\to\infty} \frac{1}{|A_n|} \int_{A_n} g(u)\ts\overline{f(-t+u)} \ \dd u \\
    &=  \lim_{n\to\infty} \frac{1}{|A_n|}\int_{A_n} g(u)\ts\widetilde{f}(t-u) \ \dd u
     =  (g \ebeA \widetilde{f})(t)\ts.
\end{align*}
This  proves the claim.
\end{proof}

We now turn to a result on the twisted Eberlein convolution.
 First, let us recall that a function $f: G \to \CC$ is called \textit{positive definite}
if, for all $n \in \NN$, $x_1, \ldots, x_n \in G$ and $c_1,\ldots, c_n \in \CC$, we have
\[
\sum_{i,j=1}^n c_i f(x_i-x_j) \overline{c_j} \geqslant 0 \ts.
\]
For properties of positive definite functions we refer the reader to \cite{BF,MoSt}.

\begin{lemma}[Positive definiteness]
 Let $f\in L^\infty(G)$ be given. Then,  $\leftbrac f , f
\rightbrac_{\cA}$ is positive definite if it exists.
\end{lemma}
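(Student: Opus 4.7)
The plan is to realise $\leftbrac f, f\rightbrac_{\cA}$ as a pointwise limit of genuine autocorrelations $\frac{1}{|A_n|}\,h_n \ast \widetilde{h_n}$, each of which is positive definite for an elementary reason, and then use the trivial fact that positive definiteness is preserved under pointwise limits.

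More precisely, first I would set $h_n := f\mathbf{1}_{A_n}$. Since $A_n$ is precompact and $f \in L^\infty(G)$, we have $h_n \in L^2(G)$ with compact support, so $h_n \ast \widetilde{h_n}$ is a well-defined continuous function on $G$. The standard computation
\[
\sum_{i,j=1}^{N} c_i\overline{c_j}\,(h_n \ast \widetilde{h_n})(x_i - x_j)
    = \int_G \Bigl| \sum_{i=1}^{N} c_i\, h_n(u + x_i) \Bigr|^2 \ \dd u \ \geqslant\  0
\]
(obtained by substituting $s = u + x_i$ in $\int h_n(s)\overline{h_n(s - x_i + x_j)}\,\dd s$) shows that each $\frac{1}{|A_n|}\,h_n \ast \widetilde{h_n}$ is positive definite.

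Next I would verify that, for every fixed $t \in G$,
\[
\leftbrac f, f\rightbrac_{\cA}(t) \ =\ \lim_{n \to \infty} \frac{1}{|A_n|}\,(h_n \ast \widetilde{h_n})(t).
\]
Unwinding definitions, $(h_n \ast \widetilde{h_n})(t) = \int_{A_n \cap (A_n + t)} f(s)\overline{f(s-t)}\,\dd s$, while $\leftbrac f, f\rightbrac_{\cA}(t)$ is defined from the integral over $A_n$. Their difference is bounded by $\|f\|_\infty^2 \,|A_n \triangle (A_n + t)|$, which, divided by $|A_n|$, tends to zero by the F{\o}lner property of the van Hove sequence $\cA$. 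Since $\leftbrac f, f\rightbrac_{\cA}(t)$ exists by hypothesis, both limits coincide.

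Finally I would conclude: passing to the limit in the inequality
\[
\sum_{i,j=1}^{N} c_i\overline{c_j}\,\tfrac{1}{|A_n|}(h_n \ast \widetilde{h_n})(x_i - x_j) \ \geqslant\ 0
\]
yields $\sum_{i,j} c_i\overline{c_j}\,\leftbrac f, f\rightbrac_{\cA}(x_i - x_j) \geqslant 0$, which is exactly the positive definiteness of $\leftbrac f, f\rightbrac_{\cA}$. The only step requiring any care is the approximation by $h_n \ast \widetilde{h_n}$, and this is handled uniformly by the F{\o}lner/van Hove hypothesis together with $f \in L^\infty(G)$; everything else is a direct consequence of the identity $\int |\sum_i c_i h(\cdot + x_i)|^2 \geqslant 0$.
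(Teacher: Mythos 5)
Your proof is correct. It rests on the same underlying identity as the paper's argument, namely that the quadratic form $\sum_{i,j}c_i\overline{c_j}\,\leftbrac f,f\rightbrac_{\cA}(x_i-x_j)$ is a limit of averages of $\bigl|\sum_i c_i f(\cdot+x_i)\bigr|^2\geqslant 0$; the difference is in the packaging. The paper (following Moody--Strungaru) manipulates the mean functional directly, writing the quadratic form as $M_{\cA}\bigl(\bigl|\sum_i c_i\tau_{-x_i}f\bigr|^2\bigr)$, which silently uses the translation invariance of the mean for bounded functions (itself a F{\o}lner fact) in the step $M_{\cA}(f\,\overline{\tau_{x_i-x_j}f})=M_{\cA}(\tau_{-x_i}f\,\overline{\tau_{-x_j}f})$, together with the existence of each cross term to justify exchanging the finite sum with $M_{\cA}$. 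You instead truncate to $h_n=f\mathbf{1}_{A_n}$, invoke the classical positive definiteness of $h_n\ast\widetilde{h_n}$ for compactly supported $h_n\in L^2(G)$, and pass to the pointwise limit after checking that $\tfrac{1}{|A_n|}(h_n\ast\widetilde{h_n})(t)$ and the defining averages of $\leftbrac f,f\rightbrac_{\cA}(t)$ differ by $O\bigl(\|f\|_\infty^2\,|A_n\triangle(A_n+t)|/|A_n|\bigr)$. This makes the role of the F{\o}lner property and of the boundedness hypothesis on $f$ completely explicit, at the cost of an extra approximation layer; both routes are equally valid and of essentially the same length.
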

\begin{proof}
The proof is similar to the argument of \cite[Page 189]{MoSt}. Let  $n \in \NN, x_1, \ldots, x_n \in G$ and $c_1,\ldots, c_n \in \CC$.
Then,
\begin{align*}
\sum_{i,j=1}^n c_i\ts \leftbrac f , f\rightbrac_{\cA} (x_i-x_j)\ts \overline{c_j}
    &=\sum_{i,j=1}^n c_i\ts M_{\cA} (f \cdot \overline{\tau_{x_i-x_j} f})\ts \overline{c_j}
     =\sum_{i,j=1}^n c_i\ts M_{\cA} (\tau_{-x_i}f\cdot \overline{\tau_{-x_j} f})\ts \overline{c_j} \\
    &=M_{\cA} \Big(\sum_{i,j=1}^n c_i \tau_{-x_i}f\cdot  \overline{c_j \tau_{x_j} f}\Big)
     = M_{\cA} \Big( \Big|\sum_{i=1}^n c_i \tau_{-x_i}f \Big|^2\Big) \geqslant 0 \ts.
\end{align*}
\end{proof}

Next, let us look at how the (twisted) Eberlein convolution behaves
with respect to translation. We first look at the Eberlein
convolution.

\begin{lemma}
Let $f,g: G \to \CC$ be such that $f\ebeA g$ is well defined.
\begin{itemize}
\item[(a)] For all $t \in G$,  $f \ebeA (\tau_t g)$ is well defined and
\[
f \ebeA (\tau_{t} g) = \tau_{t} (f \ebeA g)  \ts.
\]
\item[(b)]  If $f,g \in L^\infty(G)$, then for all $t \in G$, $(\tau_t f) \ebeA g$ is well defined and
\[
(\tau_t f) \ebeA g = \tau_t (f \ebeA g)  \ts.
\]
\end{itemize}
\end{lemma}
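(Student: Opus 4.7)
The plan is to verify both parts by direct substitution in the defining integral, where part (a) is essentially a change of variables that requires no Følner-type argument, and part (b) requires the additional boundedness hypothesis in order to absorb the shift in the domain of integration.

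For part (a), I would simply write out the definition:
\[
(f \ebeA (\tau_t g))(x) = \lim_{n\to\infty} \frac{1}{|A_n|} \int_{A_n} f(s)\ts g(x-t-s)\ \dd s = (f \ebeA g)(x-t) = \tau_t(f \ebeA g)(x).
\]
No replacement of the averaging set is needed, so the limit on the right exists and equals the limit on the left for every $t \in G$. This step is routine and requires no integrability hypothesis beyond the assumed existence of $f \ebeA g$.

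For part (b), the natural move is the substitution $u = s - t$, which transforms
\[
((\tau_t f) \ebeA g)(x) = \lim_{n\to\infty} \frac{1}{|A_n|} \int_{A_n} f(s-t)\ts g(x-s)\ \dd s = \lim_{n\to\infty} \frac{1}{|A_n|} \int_{A_n - t} f(u)\ts g(x-t-u)\ \dd u.
\]
The target expression $\tau_t(f \ebeA g)(x) = (f \ebeA g)(x-t)$ is instead the limit over $A_n$, not $A_n - t$. The key observation is that the difference of the two integrals is bounded by
\[
\frac{1}{|A_n|}\ts |A_n \Delta (A_n - t)|\ts \|f\|_\infty \ts \|g\|_\infty,
\]
and since every van Hove sequence is in particular a F\o lner sequence, this tends to $0$. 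Here is precisely where the $L^\infty$ assumption on both $f$ and $g$ enters: it allows the pointwise bound on the integrand that converts the F\o lner property into a vanishing contribution.

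The main (and only) obstacle is thus the domain shift in part (b), which is handled by the standard trick of bounding the difference of averages by the measure of a symmetric difference times $\|f\|_\infty \|g\|_\infty$ and invoking the F\o lner property. Once this is in place, we conclude $((\tau_t f) \ebeA g)(x) = (f \ebeA g)(x-t) = \tau_t(f \ebeA g)(x)$ for all $x \in G$, finishing the proof.
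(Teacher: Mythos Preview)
Your proposal is correct and follows essentially the same approach as the paper: part (a) by direct substitution with no domain shift needed, and part (b) by shifting the integration domain, bounding the difference by $\frac{1}{|A_n|}\,|A_n \Delta (-t+A_n)|\,\|f\|_\infty\|g\|_\infty$, and invoking the F\o lner property. The only cosmetic difference is that the paper writes out the difference of the two integrals before identifying the symmetric-difference bound, whereas you perform the substitution $u=s-t$ explicitly first; the underlying argument is identical.
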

\begin{proof}
(a) Since $f \ebeA g$ exists, the limits below exist for all $x,t$ and
\begin{align*}
\tau_t(f \ebeA g) (x)
    &= (f \ebeA g)(x-t)= \lim_{n\to\infty}  \frac{1}{|A_n|} \int_{A_n} f(s)\ts g(x-t-s)\ \dd s \\
    &= \lim_{n\to\infty}  \frac{1}{|A_n|} \int_{A_n} f(s)\ts (\tau_{t}g)(x-s)\ \dd s
     =  (f \ebeA (\tau_{t}g))(x) \ts.
\end{align*}
This shows that $f \ebeA (\tau_{t}g)$ exists and
\[
f \ebeA (\tau_{t}g)= \tau_t ( f \ebeA g) \ts.
\]

\smallskip

\noindent (b) For all $x,t \in G$ and all $n\in\NN$, we have
\begin{align*}
{}
    &\left|\frac{1}{|A_n|} \int_{A_n} f(s)\ts g(x-t-s)\ \dd s - \frac{1}{|A_n|} \int_{A_n}
       (\tau_t f)(s)\ts g(x-s)\ \dd s \right| \\
    &\phantom{========} =\frac{1}{|A_n|} \left|\int_{A_n} f(s)\ts  g(x-t-s) \ \dd s - \int_{A_n}
       f(s-t)\ts g(x-s)\ \dd s \right| \\
    &\phantom{========} =\frac{1}{|A_n|} \left|\int_{A_n} f(s)\ts  g(x-t-s) \ \dd s - \int_{-t+A_n}
       f(s)\ts g(x-t-s)\ \dd s \right| \\
    &\phantom{========} = \frac{1}{|A_n|} \left|\int_{A_n \Delta (-t+A_n)} f(s)\ts g(x-s-t)\ \dd s
      \right|  \ts.
\end{align*}

Since $(A_n)_{n\in\NN}$ is a F\o lner sequence and $f,g$ are bounded, we have
\[
\lim_{n\to\infty} \left|\frac{1}{|A_n|} \int_{A_n} f(s)\ts g(x-t-s)\ \dd s - \frac{1}{|A_n|} \int_{A_n}
       (\tau_t f)(s)\ts g(x-s)\ \dd s \right| =0 \ts.
\]
Therefore, since $\big(\frac{1}{|A_n|} \int_{A_n} f(s)\ts g(x-t-s)\ \dd s\big)_{n\in\NN}$ converges to $\tau_t ( f \ebeA g)$, it follows that
\[
\lim_{n\to\infty} \frac{1}{|A_n|} \int_{A_n}(\tau_t f)(s)\ts g(x-s)\ \dd s = \tau_t ( f \ebeA g)
\]
for all $x,t \in G$. This shows that $(\tau_tf)\ebeA g$ exists and
\[
(\tau_tf)\ebeA g=\tau_t ( f \ebeA g) \ts.   \qedhere
\]
\end{proof}

Similar results can be proven in the same way for the twisted
Eberlein convolution and this proves (a) and (b) of the next lemma.
Part (c) is then an immediate remarkable consequence of (a) and (b)
and shows a structural advantage of the twisted Eberlein
convolution.

\begin{lemma}[Translation invariance of twisted Eberlein convolution] \label{lem conv trans}
Let $f,g: G \to \CC$ be such that $\leftbrac f , g \rightbrac_{\cA}$
is well defined.
\begin{itemize}
\item[(a)] $\leftbrac f ,\tau_t g \rightbrac_{\cA}$ is well defined for all $t \in G$ and
\[
\leftbrac f ,\tau_t g \rightbrac_{\cA} = \tau_{-t} \leftbrac f , g
\rightbrac_{\cA}  \ts.
\]
\item[(b)]  If $f,g \in L^\infty(G)$, then  $\leftbrac \tau_t f , g \rightbrac_{\cA}$ is well defined for all $t \in G$ and
\[
\leftbrac \tau_t f , g \rightbrac_{\cA} = \tau_t \leftbrac f , g
\rightbrac_{\cA}  \ts.
\]
\item[(c)]
If $f,g \in L^\infty(G)$, then
\[
\leftbrac \tau_t f , \tau_t g
\rightbrac_{\cA} =\leftbrac f ,  g \rightbrac_{\cA}
\]
holds for all $t\in G$.  \qed
\end{itemize}
\end{lemma}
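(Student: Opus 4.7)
The plan is to deduce part (c) directly by concatenating parts (a) and (b), which are proved by translation-of-variable arguments analogous to the Eberlein case just above. No new limit-interchange is needed; everything reduces to reading off the translation behavior on both sides.

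More precisely, I would proceed as follows. Since $f \in L^\infty(G)$ and $\leftbrac f , g \rightbrac_{\cA}$ exists by hypothesis, part (b) applies and gives that $\leftbrac \tau_t f , g \rightbrac_{\cA}$ is well defined for every $t \in G$, with
\[
\leftbrac \tau_t f , g \rightbrac_{\cA} = \tau_t \leftbrac f , g \rightbrac_{\cA}.
\]
Now $\tau_t f \in L^\infty(G)$ and the twisted Eberlein convolution $\leftbrac \tau_t f , g \rightbrac_{\cA}$ just shown to exist plays the role of $\leftbrac f, g\rightbrac_\cA$ in the hypothesis of part (a). Applying (a) with first argument $\tau_t f$ (and second argument $g$) then yields that $\leftbrac \tau_t f , \tau_t g \rightbrac_{\cA}$ exists and
\[
\leftbrac \tau_t f , \tau_t g \rightbrac_{\cA} = \tau_{-t} \leftbrac \tau_t f , g \rightbrac_{\cA}.
\]

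Combining the two identities gives
\[
\leftbrac \tau_t f , \tau_t g \rightbrac_{\cA} = \tau_{-t} \bigl( \tau_t \leftbrac f , g \rightbrac_{\cA} \bigr) = \leftbrac f , g \rightbrac_{\cA},
\]
which is the claim. There is no real obstacle here beyond checking that the hypotheses of (a) and (b) are satisfied at each stage; the only point worth flagging is that the use of (a) in the second step requires the first-slot function to be bounded (which $\tau_t f$ inherits from $f$) and that $\leftbrac \tau_t f, g\rightbrac_{\cA}$ exists, both of which are supplied by the previous application of (b). Note that the chain uses (b) before (a): this order matters, because (a) as stated only asserts existence upon shifting the second argument, whereas (b) takes care of shifting the first, and only (b) requires the boundedness hypothesis.
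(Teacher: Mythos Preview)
Your proof is correct and matches the paper's approach exactly: the paper states that (a) and (b) follow by the same translation-of-variable arguments as in the preceding lemma for the untwisted Eberlein convolution, and that (c) is an immediate consequence of (a) and (b). One small inaccuracy worth noting: part (a) as stated does \emph{not} require boundedness of either argument, so your remark that ``the use of (a) in the second step requires the first-slot function to be bounded'' is unnecessary (though harmless, since $\tau_t f$ is indeed bounded); the only hypothesis needed to invoke (a) for the pair $(\tau_t f, g)$ is the existence of $\leftbrac \tau_t f, g\rightbrac_{\cA}$, which your first step supplies via (b).
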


\begin{remark}
While at the first glance it may look like the Eberlein convolution
has a nicer behaviour with respect to translates it turns out that
the twisted version behaves better in the sense that it is
translation invariant.       \exend
\end{remark}

On a structural level our preceding  results give that  the twisted
Eberlein convolution can be seen as a form of translation invariant
inner product. Specifically, the following holds.

\begin{prop}[Inner-product like properties of twisted Eberlein convolution]
\label{prop:ip}
Choose $f,g \in L^\infty(G)$. If $\leftbrac f ,  g
\rightbrac_{\cA}$ exists, then
\[
\leftbrac \tau_t f , \tau_t g \rightbrac_{\cA} =\leftbrac f ,  g \rightbrac_{\cA} = \tau_t \leftbrac f ,  \tau_t g \rightbrac_{\cA}
\]
holds for all $t\in G$. Moreover, as long as the following twisted
 Eberlein convolutions exist they have the following properties:
 \begin{itemize}
\item $\leftbrac g , f \rightbrac_{\cA}=
\widetilde{\leftbrac f , g \rightbrac_{\cA}}$ \hspace*{\fill}
Conjugate symmetry
  \item $\leftbrac af+bh , g \rightbrac_{\cA}= a\leftbrac f , g \rightbrac_{\cA}+b\leftbrac h , g \rightbrac_{\cA}$ \hspace*{\fill} Linearity in the first argument
  \item $\leftbrac f , ag+bh \rightbrac_{\cA} = \bar{a}\leftbrac f , g \rightbrac_{\cA}+\bar{b}\leftbrac f , h \rightbrac_{\cA}$  \hspace*{\fill} Sesqui-linearity in the second argument
  \item $\leftbrac f , f \rightbrac_{\cA}$ is positive definite \hspace*{\fill} Positive-definiteness
  \item $ \| \leftbrac f , g \rightbrac_{\cA}\|_\infty \leqslant \| f \|_{b, 2, \cA} \cdot  \| g \|_{b, 2,\cA}$ \hspace*{\fill} Cauchy--Schwarz type inequality
  \item $\leftbrac f , f \rightbrac_{\cA}(0)=  \| \leftbrac f , f \rightbrac_{\cA} \|_\infty = \| f \|_{b, 2,
\cA}^2$.
\end{itemize}
\end{prop}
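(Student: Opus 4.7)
Most of the six listed properties are immediate consequences of results already established in this section. The translation identity $\leftbrac \tau_t f,\tau_t g\rightbrac_{\cA}=\leftbrac f,g\rightbrac_{\cA}$ is part (c) of Lemma~\ref{lem conv trans}, while the identity $\leftbrac f,g\rightbrac_{\cA}=\tau_t\leftbrac f,\tau_t g\rightbrac_{\cA}$ follows from part (a) of the same lemma applied to $\tau_t g$ (which gives $\leftbrac f,\tau_t g\rightbrac_{\cA}=\tau_{-t}\leftbrac f,g\rightbrac_{\cA}$) and then applying $\tau_t$ on both sides. Conjugate symmetry is exactly \eqref{eq tebe twist com}, and positive-definiteness of $\leftbrac f,f\rightbrac_{\cA}$ is the preceding lemma. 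Linearity in the first argument and sesqui-linearity in the second reduce to linearity of the integral and of the limit: assuming all three involved twisted convolutions exist, splitting
\[
\frac{1}{|A_n|}\int_{A_n}\bigl(af(s)+bh(s)\bigr)\overline{g(s-t)}\,\dd s
\]
into two pieces and sending $n\to\infty$ gives linearity in the first slot, and the complex conjugation inside the second argument converts $ag+bh$ into $\bar a\,\overline{g}+\bar b\,\overline{h}$, which yields sesqui-linearity in the second slot.

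\textbf{The Cauchy--Schwarz type inequality.} This is the only step requiring real work. For fixed $t\in G$ and each $n$, the ordinary Cauchy--Schwarz inequality in $L^2(A_n,\dd s/|A_n|)$ gives
\[
\left|\frac{1}{|A_n|}\int_{A_n} f(s)\,\overline{g(s-t)}\,\dd s\right|\leqslant \left(\frac{1}{|A_n|}\int_{A_n}|f(s)|^2\,\dd s\right)^{1/2}\left(\frac{1}{|A_n|}\int_{A_n}|g(s-t)|^2\,\dd s\right)^{1/2}.
\]
By hypothesis, the left-hand side converges to $|\leftbrac f,g\rightbrac_{\cA}(t)|$. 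The $\limsup$ of the first factor on the right is at most $\|f\|_{b,2,\cA}$ directly from the definition. For the second factor, a change of variable combined with the F\o lner property of the van Hove sequence $\cA$ and the boundedness of $g$ yield
\[
\left|\int_{A_n}|g(s-t)|^2\,\dd s - \int_{A_n}|g(u)|^2\,\dd u\right|\leqslant \|g\|_\infty^2\cdot|A_n\ts\Delta\ts(t+A_n)|=o(|A_n|),
\]
so the $\limsup$ of the second factor is also bounded by $\|g\|_{b,2,\cA}$. Using the standard inequality $\limsup(a_nb_n)\leqslant(\limsup a_n)(\limsup b_n)$ for nonnegative sequences, we conclude $|\leftbrac f,g\rightbrac_{\cA}(t)|\leqslant \|f\|_{b,2,\cA}\|g\|_{b,2,\cA}$ uniformly in $t$.

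\textbf{The final identity.} Specializing the Cauchy--Schwarz inequality to $g=f$ yields $\|\leftbrac f,f\rightbrac_{\cA}\|_\infty\leqslant \|f\|_{b,2,\cA}^2$, while the preceding lemma relating the Besicovitch norm to the twisted Eberlein convolution provides $\leftbrac f,f\rightbrac_{\cA}(0)=\|f\|_{b,2,\cA}^2$. Since $|\leftbrac f,f\rightbrac_{\cA}(0)|\leqslant \|\leftbrac f,f\rightbrac_{\cA}\|_\infty$, a sandwich forces equality of all three quantities. The only mild obstacle is the F\o lner estimate used to replace the translated $L^2$ integral by the untranslated one; everything else is a direct bookkeeping assembly of the earlier lemmas.
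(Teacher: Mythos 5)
Your proposal is correct, and for everything except the Cauchy--Schwarz type inequality it follows the same route as the paper: citing Lemma~\ref{lem conv trans}, Eq.~\eqref{eq tebe twist com}, the positive-definiteness lemma, the lemma identifying $\leftbrac f,f\rightbrac_{\cA}(0)$ with $\|f\|_{b,2,\cA}^2$, and linearity of integral and limit for the two (sesqui-)linearity bullets. Where you genuinely diverge is the Cauchy--Schwarz step. The paper argues abstractly: it observes that $(u,v)\mapsto\leftbrac u,v\rightbrac_{\cA}(0)$ is a semi-inner product on the linear span of the translates of $f$ and $g$, invokes the abstract Cauchy--Schwarz inequality for semi-inner products, and then uses translation invariance to convert $|\leftbrac f,g\rightbrac_{\cA}(t)|$ into $|\leftbrac f,\tau_t g\rightbrac_{\cA}(0)|$. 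That argument is slick but implicitly requires the twisted Eberlein convolution to exist on the whole span of translates, an assumption the paper has to insert by hand. Your argument instead applies the ordinary Cauchy--Schwarz inequality in $L^2(A_n,\dd s/|A_n|)$ at each finite stage and controls the translated second factor by the F\o lner property, $\bigl|\int_{A_n}|g(s-t)|^2\,\dd s-\int_{A_n}|g(u)|^2\,\dd u\bigr|\leqslant\|g\|_\infty^2\,|A_n\,\Delta\,(-t+A_n)|=o(|A_n|)$; this only uses the existence of $\leftbrac f,g\rightbrac_{\cA}$ itself and the boundedness of $f$ and $g$, so it is both more elementary and slightly more economical in hypotheses. (It is, in effect, a self-contained proof of the $p=q=2$ case of the paper's later Lemma~\ref{lemma norm}, which the paper imports from external references.) The concluding sandwich for the final bullet is also a minor variant: you deduce $\|\leftbrac f,f\rightbrac_{\cA}\|_\infty\leqslant\|f\|_{b,2,\cA}^2=\leftbrac f,f\rightbrac_{\cA}(0)\leqslant\|\leftbrac f,f\rightbrac_{\cA}\|_\infty$ from your inequality, whereas the paper gets $\|\leftbrac f,f\rightbrac_{\cA}\|_\infty=\leftbrac f,f\rightbrac_{\cA}(0)$ from the fact that a positive definite function attains its supremum at $0$; both are fine.
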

\begin{proof} Translation invariance has already been shown.
The first and the fourth  bullet have already been shown. The second
and third bullet are easy to derive.

It remains to show Cauchy-Schwarz inequality: By positive
definitedness we have
\[
\|\leftbrac f , f \rightbrac_{\cA} \|_\infty=
\leftbrac f , f \rightbrac_{\cA}(0)
\]
and as shown above we have
\[
\leftbrac f , f \rightbrac_{\cA}(0)
 =  \| f \|_{b, 2, \cA}^2 \ts.
\]
Now, consider $f,g$ such that the twisted Eberlein convolution
exists on the  linear span of their translates. Then, the map
\[
(u,v) \mapsto\leftbrac u , v
\rightbrac_{\cA}(0)=:\langle u, v\rangle
\]
is an (semi-)inner product
on this space by what we have shown already. Hence, it satisfies
\[
|\langle u, v\rangle|\leqslant \langle u, u\rangle^{1/2}  \langle
v,v\rangle^{1/2} \ts.
\]
Taking $u = f$, $v = \tau_t g$ we find from
translation invariance
\[
|\leftbrac f , g \rightbrac_{\cA}(t)| =|\tau_{-t} \leftbrac f , g
\rightbrac_{\cA}(0)| = |\leftbrac f , \tau_t g \rightbrac_{\cA}(0)|
=|\langle u,v\rangle| \ts.
\]
Putting this together we infer the statement.
\end{proof}

\begin{remark}
The preceding proposition shows the advantage of the
twisted Eberlein convolution over the Eberlein convolution.  \exend
\end{remark}

\subsection{Continuity and smoothing of the (twisted) Eberlein
convolution}
In this section we study continuity properties of the
(twisted) Eberlein convolution in its two arguments. As a
consequence we will be able to derive smoothing properties of the
(twisted) Eberlein convolution.

\medskip

The following result is an immediate consequence of \cite[Lemma~1.15]{LSS} and \cite[Lemma~1.13]{LSS} (note that the case $p=1$ trivially holds).

\begin{lemma}\label{lemma norm}
Let $p\geqslant1$ and let $q$ be such that
$\frac{1}{p}+\frac{1}{q}=1$ . Let  $f \in
L^p_{\operatorname{loc}}(G)$ and $g \in L^q_{\operatorname{loc}}(G)$
be such that $\leftbrac f , g \rightbrac_{\cA}$ exists. Then, we
have
\[
| \leftbrac f , g \rightbrac_{\cA}(t) | \leqslant \| f \|_{b, p,
\cA} \cdot \| \tau_t g \|_{b, q, \cA} \ts,
\]
for all $t \in G$. In particular, if $g \in L^\infty(G)$, we have
\[
\| \leftbrac f , g \rightbrac_{\cA} \|_\infty \leqslant \| f \|_{b,
p, \cA} \cdot  \| g \|_{b, q, \cA} \ts.   \tag*{\qed}
\]
\end{lemma}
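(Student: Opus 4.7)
My plan is to apply Hölder's inequality inside each of the finite averages $\frac{1}{|A_n|}\int_{A_n}$ that define the twisted Eberlein convolution, and then pass to $\limsup$ in $n$. The ``in particular'' clause then follows by invoking translation invariance of $\|\cdot\|_{b,q,\cA}$ on $L^\infty(G)$, which itself rests only on the Følner property of $\cA$.

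For the first inequality, fix $t \in G$. For every $n \in \NN$, Hölder's inequality with conjugate exponents $p$ and $q$ on the finite measure space $(A_n, |A_n|^{-1}\,\dd s)$, applied to the restrictions of $f$ and $\tau_t g$, gives
\[
\frac{1}{|A_n|}\left|\int_{A_n} f(s)\,\overline{g(s-t)}\,\dd s\right| \leqslant \left(\frac{1}{|A_n|}\int_{A_n} |f(s)|^p\,\dd s\right)^{1/p}\left(\frac{1}{|A_n|}\int_{A_n}|g(s-t)|^q\,\dd s\right)^{1/q}.
\]
Since $\leftbrac f, g \rightbrac_{\cA}(t)$ exists by hypothesis, the left-hand side converges to $|\leftbrac f, g \rightbrac_{\cA}(t)|$. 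On the right, the two nonnegative factors have $\limsup$'s equal to $\|f\|_{b,p,\cA}$ and $\|\tau_t g\|_{b,q,\cA}$, respectively, directly from the definition of the Besicovitch semi-norms. The elementary bound $\limsup_n a_n b_n \leqslant (\limsup_n a_n)(\limsup_n b_n)$ for nonnegative sequences then yields the first inequality.

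For the second statement, it suffices to show that $\|\tau_t g\|_{b,q,\cA} = \|g\|_{b,q,\cA}$ for every $g \in L^\infty(G)$ and every $t \in G$, and then take the supremum over $t$ in the first inequality. Substituting $u = s - t$ gives $\frac{1}{|A_n|}\int_{A_n}|g(s-t)|^q\,\dd s = \frac{1}{|A_n|}\int_{A_n - t}|g(u)|^q\,\dd u$, and this differs from $\frac{1}{|A_n|}\int_{A_n}|g|^q\,\dd u$ by at most $\|g\|_\infty^q \cdot |A_n \Delta (A_n - t)|/|A_n|$, which vanishes as $n\to\infty$ because every van Hove sequence is Følner.

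I do not anticipate any serious obstacle here. The two points requiring a modicum of care are the asymmetry between the convergent left-hand side and the $\limsup$ bound on the right in the first step, and the need to invoke the Følner property (rather than mere boundedness of $g$) to obtain the translation invariance of $\|\cdot\|_{b,q,\cA}$ used in the second step.
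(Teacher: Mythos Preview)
Your argument is correct and is precisely the elementary content behind the paper's proof, which simply cites \cite[Lemma~1.13 and Lemma~1.15]{LSS} without writing out the details: the first cited lemma is the H\"older-type bound you derive for the finite averages, and the second is the translation invariance of $\|\cdot\|_{b,q,\cA}$ on $L^\infty(G)$ via the F{\o}lner property. So your approach and the paper's are the same; you have merely unpacked the referenced lemmas in place.
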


\begin{coro}
Let $p\geqslant1$ and let $q$ be such that  $\frac{1}{p}+\frac{1}{q}=1$ . Let  $f \in L^p_{\operatorname{loc}}(G)$ and $g \in L^q_{\operatorname{loc}}(G)$ be such that $f \ebeA g$ exists. Then, for all $t \in G$, we have
\[
| (f \ebeA g)(t) | \leqslant \| f \|_{b, p, \cA} \cdot \| \tau_{-t} g \|_{b, q, -\cA} \ts.
\]
In particular, if $g \in L^\infty(G)$, we have
\[
\| f \ebeA g \|_\infty \leqslant \| f \|_{b, p, \cA} \cdot  \| g \|_{b, q, -\cA} \ts.    \tag*{\qed}
\]
\end{coro}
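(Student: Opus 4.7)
The plan is to reduce the corollary to Lemma~\ref{lemma norm} via the identity $f\ebeA g=\leftbrac f,\widetilde{g}\rightbrac_{\cA}$, which follows from the Remark relating the Eberlein and twisted Eberlein convolutions together with $\widetilde{\widetilde{g}}=g$. Since $g\in L^q_{\operatorname{loc}}(G)$ implies $\widetilde{g}\in L^q_{\operatorname{loc}}(G)$, Lemma~\ref{lemma norm} applied to $f$ and $\widetilde{g}$ immediately yields
\[
|(f\ebeA g)(t)|=|\leftbrac f,\widetilde{g}\rightbrac_{\cA}(t)|\leqslant \|f\|_{b,p,\cA}\cdot\|\tau_t\widetilde{g}\|_{b,q,\cA}\ts.
\]
So the whole task is to compare $\|\tau_t\widetilde{g}\|_{b,q,\cA}$ with $\|\tau_{-t}g\|_{b,q,-\cA}$.

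\textbf{Change of variables.} The next step is to verify the identity $\|\tau_t\widetilde{g}\|_{b,q,\cA}=\|\tau_{-t}g\|_{b,q,-\cA}$. From $(\tau_t\widetilde{g})(x)=\overline{g(t-x)}$, substituting $y=t-x$ gives
\[
\|\tau_t\widetilde{g}\|_{b,q,\cA}^q=\limsup_{n\to\infty}\frac{1}{|A_n|}\int_{t-A_n}|g(y)|^q\ \dd y\ts,
\]
while from $(\tau_{-t}g)(x)=g(x+t)$ the substitution $y=x+t$ gives
\[
\|\tau_{-t}g\|_{b,q,-\cA}^q=\limsup_{n\to\infty}\frac{1}{|-A_n|}\int_{t-A_n}|g(y)|^q\ \dd y\ts.
\]
These agree because the Haar measure is inversion-invariant, i.e. $|A_n|=|-A_n|$. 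Inserting this identity into the previous bound proves the first displayed inequality of the corollary.

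\textbf{The $L^\infty$ case and main difficulty.} For the second assertion, I would show that when $g\in L^\infty(G)$ the quantity $\|\tau_{-t}g\|_{b,q,-\cA}$ is in fact equal to $\|g\|_{b,q,-\cA}$, and then take the supremum in $t$. This translation invariance is a standard consequence of the Følner property (which $(-A_n)_{n\in\NN}$ inherits from $\cA$): the averages over $-A_n$ and over $-A_n+t$ differ by at most
\[
\|g\|_\infty^q\cdot\frac{|(-A_n)\ts\Delta\ts(-A_n+t)|}{|-A_n|}\ts,
\]
which tends to $0$, so the two limsups coincide.

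The only real difficulty is bookkeeping: keeping the $\pm$ signs and van Hove sequences $\cA$ versus $-\cA$ straight across the substitutions, and noting in passing that $(-A_n)_{n\in\NN}$ is itself a van Hove sequence (immediate from the definition of $\partial^K A$ under the symmetry $A\mapsto -A$). Apart from this minor juggling, no new ideas beyond Lemma~\ref{lemma norm} are required.
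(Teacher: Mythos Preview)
Your proof is correct and follows exactly the approach the paper has in mind: the corollary is stated with a bare \qed immediately after Lemma~\ref{lemma norm}, so the intended argument is precisely your reduction via $f\ebeA g=\leftbrac f,\widetilde{g}\rightbrac_{\cA}$ together with the change of variables $\|\tau_t\widetilde{g}\|_{b,q,\cA}=\|\tau_{-t}g\|_{b,q,-\cA}$. Your handling of the $L^\infty$ case via the F{\o}lner property of $(-A_n)_n$ is likewise the standard translation-invariance argument the paper is implicitly invoking (cf.\ the references to \cite[Lem.~1.13, 1.15]{LSS} in Lemma~\ref{lemma norm}).
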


Let us note here that the previous twin results emphasize why it is sometimes advantageous to work with the twisted version of the Eberlein convolution.
A nice immediate consequence of Lemma~\ref{lemma norm} is the following result which states that (for suitable functions) the twisted Eberlein convolution is uniformly continuous.

\begin{coro}\label{core tebe cu}
Let $f \in BL_{\cA}^p(G)$ and $g \in BC_{\cA}^q(G)\cap L^\infty(G)$
with $\frac{1}{p}+\frac{1}{q}=1$ be such that $\leftbrac f , g
\rightbrac_{\cA}$ exists. Then,
\[
\leftbrac f , g \rightbrac_{\cA} \in \Cu(G) \ts.
\]
\end{coro}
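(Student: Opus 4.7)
The plan is to set $h := \leftbrac f , g \rightbrac_{\cA}$ and show directly that $h$ is bounded and uniformly continuous on $G$. Boundedness is already provided by Lemma~\ref{lemma norm}, and the finiteness of $\|g\|_{b,q,\cA}$ follows from $g \in L^\infty(G)$. For uniform continuity, the goal is to estimate $|h(t+s)-h(t)|$ by a quantity that depends only on $s$ and tends to $0$ as $s \to 0$.

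Since both $h(t+s)$ and $h(t)$ exist as limits by assumption, I can combine the defining means into one and write
\[
h(t+s)-h(t) = \lim_{n\to\infty} \frac{1}{|A_n|} \int_{A_n} f(u)\ts \overline{(\tau_{t+s}g - \tau_t g)(u)}\ \dd u \ts.
\]
Applying H\"older's inequality inside the limit (exactly as in the proof of Lemma~\ref{lemma norm}) gives
\[
|h(t+s)-h(t)| \leqslant \|f\|_{b,p,\cA}\cdot \|\tau_{t+s}g - \tau_t g\|_{b,q,\cA} = \|f\|_{b,p,\cA}\cdot \|\tau_t(\tau_s g - g)\|_{b,q,\cA} \ts.
\]
Because $g \in L^\infty(G)$, the function $\tau_s g - g$ is bounded, and the van Hove property yields $|A_n\Delta(A_n-t)|/|A_n| \to 0$. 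A short direct computation then shows that for any bounded measurable function $k$ one has $\|\tau_t k\|_{b,q,\cA} = \|k\|_{b,q,\cA}$, so the above estimate reduces to
\[
|h(t+s)-h(t)| \leqslant \|f\|_{b,p,\cA}\cdot \|\tau_s g - g\|_{b,q,\cA} \ts,
\]
which is independent of $t$.

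It remains to prove that $\|\tau_s g - g\|_{b,q,\cA} \to 0$ as $s \to 0$, and this is where the assumption $g \in BC_{\cA}^q(G)$ (the closure of $\Cu(G)$ in $BL_{\cA}^q(G)$) enters. Given $\eps > 0$, pick $g_0 \in \Cu(G)$ with $\|g-g_0\|_{b,q,\cA}<\eps/3$. Since $\|k\|_{b,q,\cA} \leqslant \|k\|_\infty$ and $g_0$ is uniformly continuous, $\|\tau_s g_0 - g_0\|_{b,q,\cA}<\eps/3$ for $s$ in a suitable neighbourhood of $0$. Noting that $g-g_0$ is bounded and invoking once more the translation invariance of $\|\cdot\|_{b,q,\cA}$ on $L^\infty(G)$, a triangle inequality yields $\|\tau_s g - g\|_{b,q,\cA}<\eps$ in that neighbourhood, completing the proof. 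The only mildly technical point is the translation invariance of the semi-norm for bounded functions; the semi-norm is not translation invariant in general, but the van Hove property renders the boundary correction negligible whenever the function lies in $L^\infty(G)$.
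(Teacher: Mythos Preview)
Your proof is correct and follows essentially the same route as the paper's: both establish boundedness via Lemma~\ref{lemma norm}, reduce uniform continuity to $\|\tau_s g - g\|_{b,q,\cA}\to 0$ through a H\"older estimate combined with translation invariance of the Besicovitch semi-norm on bounded functions, and then use $g\in BC_{\cA}^q(G)$ to conclude. The only difference is cosmetic: the paper cites \cite{LSS} for the continuity of the translation action on $BC_{\cA}^q(G)/\!\equiv$ and the identity $T_t[g]=[\tau_t g]$ for bounded $g$, whereas you supply these facts directly via the $\Cu(G)$-approximation and the F{\o}lner boundary argument, making your version slightly more self-contained.
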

\begin{proof}
First note that the function $\leftbrac f , g \rightbrac_{\cA}$ is
bounded by Lemma~\ref{lemma norm}. Next, by Lemma~\ref{lem conv
trans} and Lemma~\ref{lemma norm}, we have
\[
\| \tau_t \leftbrac f , g \rightbrac_{\cA}- \leftbrac f , g
\rightbrac_{\cA} \|_\infty  = \| \leftbrac f , \tau_{-t}g- g
\rightbrac_{\cA} \ \|_\infty
  \leqslant \| f \|_{b, p, \cA} \cdot  \| \tau_{-t}g- g \|_{b, q, \cA}
\]
for all $t \in G$.

Let $\eps >0$ be fixed but arbitrary.
Now, recall from \cite[p. 19]{LSS} that the translation mapping $\tau_t : \Cu(G) \to \Cu(G)$ has a unique extension to a continuous isometry $T_t : BC_{\cA}^q(G)/\equiv \to  BC_{\cA}^q(G)/\equiv$. Therefore, there exists an open neighbourhood $U =-U$ of $0$ such that
\[
\| T_t[g]- [g] \|_{b, q, \cA}< \frac{\eps}{ 1+\| f \|_{b, p, \cA} }
\]
for all $t \in U$.
Next, since $g \in L^\infty(G)$, \cite[Prop.~1.20]{LSS} implies
\[
T_t [g] = [\tau_t g] \ts.
\]
Therefore, we have $-t \in U$ for all $t \in U$, and hence
\[
\| \tau_{-t}g- g \|_{b, q, \cA}=\| T_{-t}[g]- [g] \|_{b, q, \cA}< \frac{\eps}{ 1+\| f \|_{b, p, \cA} } \ts.
\]
This gives
\[
\| \tau_t \leftbrac f , g \rightbrac_{\cA}- \leftbrac f , g
\rightbrac_{\cA} \|_\infty < \eps
\]
 for all $t \in U$, which proves the claim.
\end{proof}

It is possible to interchange the roles of $f$ and $g$ in the previous corollary due to Eq.~\eqref{eq tebe twist com}.

\begin{coro} \label{core tebe cu2}
Let $f \in BC_{\cA}^p(G) \cap L^\infty(G)$ and $g \in BL_{\cA}^q(G)$
with $\frac{1}{p}+\frac{1}{q}=1$ be such that $\leftbrac f , g
\rightbrac_{\cA}$ exists. Then,
\[
\leftbrac f , g \rightbrac_{\cA} \in \Cu(G) \ts.   \tag*{\qed}
\]
\end{coro}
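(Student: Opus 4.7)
The plan is to reduce to Corollary~\ref{core tebe cu} by swapping the roles of the two arguments, exploiting the conjugate symmetry relation~\eqref{eq tebe twist com}.

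First, since $\leftbrac f , g \rightbrac_{\cA}$ is assumed to exist, the conjugate-symmetry lemma preceding \eqref{eq tebe twist com} guarantees that $\leftbrac g , f \rightbrac_{\cA}$ exists as well, with
\[
\leftbrac f , g \rightbrac_{\cA} = \widetilde{\leftbrac g , f \rightbrac_{\cA}} \ts.
\]
So it suffices to show $\leftbrac g , f \rightbrac_{\cA}\in \Cu(G)$ and then observe that the operation $h\mapsto \widetilde{h}$, given by $\widetilde{h}(x)=\overline{h(-x)}$, preserves $\Cu(G)$ (it is the composition of conjugation and the reflection $x\mapsto -x$, both of which are isometries of $(\Cu(G),\|\cdot\|_\infty)$).

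To obtain $\leftbrac g , f \rightbrac_{\cA}\in \Cu(G)$, I would apply Corollary~\ref{core tebe cu} to the pair $(g,f)$ with the exponents $p$ and $q$ interchanged. Concretely, setting $p':=q$ and $q':=p$, we still have $\tfrac{1}{p'}+\tfrac{1}{q'}=1$, and the hypotheses of Corollary~\ref{core tebe cu} read: the first argument should lie in $BL_{\cA}^{p'}(G)=BL_{\cA}^{q}(G)$, which is satisfied by $g$; and the second argument should lie in $BC_{\cA}^{q'}(G)\cap L^\infty(G)=BC_{\cA}^{p}(G)\cap L^\infty(G)$, which is satisfied by $f$. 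Therefore Corollary~\ref{core tebe cu} applies and yields $\leftbrac g , f \rightbrac_{\cA}\in \Cu(G)$.

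Combining the two steps, $\leftbrac f , g \rightbrac_{\cA} = \widetilde{\leftbrac g , f \rightbrac_{\cA}}\in \Cu(G)$, which is the claim. There is no real obstacle here: the content of the corollary is entirely captured by the conjugate symmetry~\eqref{eq tebe twist com}, and the only thing to verify carefully is that the role swap is compatible with the hypotheses (i.e.\ that the conjugate exponents can be exchanged), which is automatic.
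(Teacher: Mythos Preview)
Your approach is exactly the one the paper indicates: invoke the conjugate-symmetry relation~\eqref{eq tebe twist com} to swap the roles of $f$ and $g$, and then apply Corollary~\ref{core tebe cu} with the exponents interchanged.

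One technical point deserves mention: the lemma establishing~\eqref{eq tebe twist com} is stated under the hypothesis $f,g\in L^\infty(G)$, whereas here only $f\in L^\infty(G)$ is assumed and $g$ lies merely in $BL_{\cA}^q(G)$. You invoke that lemma without addressing this discrepancy. The paper glosses over the same point (it simply says the result follows ``due to Eq.~\eqref{eq tebe twist com}''), so your argument matches the paper's level of detail; but strictly speaking the applicability of the conjugate-symmetry lemma in the stated generality is not justified by either.
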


We can now show that in many situations the twisted Eberlein convolution is a weakly almost periodic function.
Using properties of translation bounded measures, we will show latter in Corollary~\ref{cor1} that assuming the existence of
 $\leftbrac f , f \rightbrac_{\cA}$ and $\leftbrac g , g \rightbrac_{\cA}$ is actually not necessary.

\begin{coro}\label{cor2}
Let $f, g \in \Cu(G)$ be given and assume that $\leftbrac f , f
\rightbrac_{\cA}$, $\leftbrac g , g \rightbrac_{\cA}$ and $\leftbrac
f , g \rightbrac_{\cA}$ exist. Then, $\leftbrac f , g
\rightbrac_{\cA}$ is weakly almost periodic.
\end{coro}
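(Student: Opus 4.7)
The plan is to use the inner-product structure isolated in Proposition~\ref{prop:ip} and reduce the claim by polarisation to the diagonal case $\leftbrac h , h \rightbrac_{\cA}$, which is a continuous, bounded, positive-definite function and hence, by Bochner's theorem together with the classical fact that Fourier--Stieltjes transforms of finite measures on $\widehat{G}$ lie in $\WAP(G)$, weakly almost periodic.

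First I would verify that for every $\alpha \in \CC$ the twisted Eberlein convolution $\leftbrac f + \alpha g, f + \alpha g \rightbrac_{\cA}$ actually exists. Conjugate symmetry turns the hypothesised existence of $\leftbrac f , g \rightbrac_{\cA}$ into existence of $\leftbrac g , f \rightbrac_{\cA}$, and the expansion
\[
(f + \alpha g)(s)\ts \overline{(f + \alpha g)(s - t)}
= f(s)\overline{f(s-t)} + |\alpha|^{2} g(s)\overline{g(s-t)} + \bar{\alpha}\, f(s)\overline{g(s-t)} + \alpha\, g(s)\overline{f(s-t)}
\]
then shows that the limit defining $\leftbrac f + \alpha g , f + \alpha g \rightbrac_{\cA}(t)$ splits into the four hypothesised limits. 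Feeding $\alpha \in \{1, i, -1, -i\}$ into the resulting identity, together with $\sum_{k=0}^{3} i^{k} = \sum_{k=0}^{3} i^{2k} = 0$, yields the polarisation formula
\[
\leftbrac f , g \rightbrac_{\cA} = \frac{1}{4} \sum_{k=0}^{3} i^{k}\, \leftbrac f + i^{k} g , f + i^{k} g \rightbrac_{\cA}.
\]

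For each $k$ set $h_{k} := f + i^{k} g$. Then $h_{k} \in \Cu(G) \subseteq L^{\infty}(G) \cap BC^{2}_{\cA}(G)$, so Corollary~\ref{core tebe cu} with $p = q = 2$ places $\leftbrac h_{k} , h_{k} \rightbrac_{\cA}$ in $\Cu(G)$, and the positive-definiteness lemma makes it a continuous, bounded, positive-definite function. By Bochner's theorem it is the Fourier--Stieltjes transform $\widehat{\nu_{k}}$ of a finite positive Radon measure $\nu_{k}$ on $\widehat{G}$, and Fourier--Stieltjes transforms of finite measures on $\widehat{G}$ are classically known to be weakly almost periodic (a direct consequence of Eberlein's double-limit criterion; see e.g.\ \cite{BF,MoSt}). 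Since $\WAP(G)$ is a $\CC$-linear subspace of $\Cu(G)$, the polarisation formula exhibits $\leftbrac f , g \rightbrac_{\cA}$ as a linear combination of WAP functions and the claim follows.

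The only non-trivial obstacle I foresee is the bookkeeping needed to extract existence of each $\leftbrac h_{k} , h_{k} \rightbrac_{\cA}$ from the three hypothesised limits, together with the invocation of the standard inclusion $B(\widehat{G}) \subseteq \WAP(G)$; everything else is either sesquilinearity or a direct application of earlier material in the paper.
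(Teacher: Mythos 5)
Your argument is correct and is essentially the paper's own proof: polarisation reduces $\leftbrac f , g \rightbrac_{\cA}$ to a linear combination of the diagonal terms $\leftbrac f + i^k g , f + i^k g \rightbrac_{\cA}$, each of which is a continuous positive definite function and hence weakly almost periodic. You merely spell out the existence bookkeeping and justify the standard fact ``continuous positive definite $\Rightarrow$ WAP'' via Bochner, which the paper simply cites.
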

\begin{proof} Any positive definite continuous function is weakly almost
periodic. Now, by polarisation, we can write $\leftbrac f , g
\rightbrac_{\cA}$  as linear combination of continuous positive definite
functions and the statement follows.
\end{proof}

The statements of Corollary~\ref{core tebe cu} and Corollary~\ref{core tebe cu2} remain true (except for changing $\cA$ to $-\cA$ at one point) if we consider the standard Eberlein convolution.

\begin{coro}
Let $p \geqslant 1$ and $q$ be such that $\frac{1}{p}+\frac{1}{q}=1$.
\begin{itemize}
  \item[(a)] Let $f \in BL_{\cA}^p(G)$ and $g \in BC_{-\cA}^q(G)\cap L^\infty(G)$ be such that $f \ebeA g$ exists. Then,
\[
f \ebeA g \in \Cu(G) \ts.
\]
  \item[(b)] Let $f \in BC_{\cA}^p(G) \cap L^\infty(G)$ and $g \in BL_{-\cA}^q(G)$ be such that $f \ebeA g$ exists. Then,
\[
f \ebeA g \in \Cu(G) \ts.  \tag*{\qed}
\]
\end{itemize}
\end{coro}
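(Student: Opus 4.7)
The plan is to reduce this corollary directly to the twin results already established for the twisted Eberlein convolution, namely Corollary~\ref{core tebe cu} and Corollary~\ref{core tebe cu2}. The bridge is the Remark relating the two convolutions: since $\widetilde{\widetilde{g}} = g$, we have the identity
\[
f \ebeA g = \leftbrac f , \widetilde{g} \rightbrac_{\cA} \ts,
\]
with one side existing iff the other does. So the only task is to check that the ``twiddle'' operation carries the hypotheses on $g$ into the form required by the twisted-version corollaries.

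First I will verify that $\widetilde{\cdot}$ is an isometric involution between $BL^q_{-\cA}(G)$ and $BL^q_{\cA}(G)$. By the substitution $s \mapsto -s$ and the fact that Haar measure is inversion-invariant, one computes
\[
\frac{1}{|A_n|} \int_{A_n} |\widetilde{g}(s)|^q \ \dd s = \frac{1}{|{-A_n}|} \int_{-A_n} |g(t)|^q \ \dd t \ts,
\]
which gives $\|\widetilde{g}\|_{b,q,\cA} = \|g\|_{b,q,-\cA}$. Since $\widetilde{\cdot}$ also maps $\Cu(G)$ to itself, continuity of the twiddle map on the Besicovitch spaces and the definition of $BC^q_\cA(G)$ as the closure of $\Cu(G)$ show that $g \in BC^q_{-\cA}(G)$ implies $\widetilde{g} \in BC^q_\cA(G)$. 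Likewise, $\widetilde{\cdot}$ preserves $L^\infty(G)$ with equal norms, and it preserves the $BC_{-\cA}^p \cap L^\infty$ class needed for part (b).

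With these compatibility facts in hand, the two parts follow immediately. For (a), set $h := \widetilde{g}$; then $h \in BC^q_\cA(G) \cap L^\infty(G)$, and the existence of $f \ebeA g$ is equivalent to the existence of $\leftbrac f, h \rightbrac_{\cA}$, with the two coinciding. Corollary~\ref{core tebe cu} therefore gives $f \ebeA g = \leftbrac f, h \rightbrac_{\cA} \in \Cu(G)$. Part (b) is handled identically: with $h := \widetilde{g} \in BC^q_\cA(G)$ and $f \in BC^p_\cA(G) \cap L^\infty(G)$, Corollary~\ref{core tebe cu2} applies to $\leftbrac f, h \rightbrac_{\cA}$.

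There is no real obstacle here; the only item requiring a line of verification is the reflection behaviour of the norm $\|\cdot\|_{b,q,\cA}$ under the swap $\cA \leftrightarrow -\cA$, which is a one-line change of variables. The appearance of $-\cA$ in the hypothesis on $g$ in this unttwisted version (as compared to the twisted version) is precisely the trace left behind by absorbing the $\widetilde{\cdot}$ into the second argument, which is the same phenomenon visible in Lemma~\ref{lem:ebe comm}.
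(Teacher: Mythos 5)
Your reduction to Corollaries~\ref{core tebe cu} and \ref{core tebe cu2} via the identity $f \ebeA g = \leftbrac f , \widetilde{g} \rightbrac_{\cA}$, together with the verification that $\widetilde{\cdot}$ is an isometry from $BL^q_{-\cA}(G)$ (resp.\ $BC^q_{-\cA}(G)$) onto $BL^q_{\cA}(G)$ (resp.\ $BC^q_{\cA}(G)$), is exactly the argument the paper intends; the result is stated there without proof as an immediate consequence of the twisted versions, with the $-\cA$ arising precisely from absorbing the reflection into the second argument. Your proposal is correct and follows the same route.
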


It turns out that the (twisted) Eberlein convolution is not only uniformly continuous and bounded, but it is also Bohr almost periodic if one of the two functions is mean almost periodic (see next theorem). This seems to be part of a more general smoothing effect the (twisted) Eberlein convolution has.

\begin{theorem}\label{them tebe is sap}
Let $f \in BL^{p}_{\cA}(G)\cap L^\infty(G)$ and  $g \in
Map^q_{\cA}(G) \cap L^\infty(G)$ with $\frac{1}{p}+\frac{1}{q}=1$.
If $\leftbrac f , g \rightbrac_{\cA}$ exists, then $\leftbrac f , g
\rightbrac_{\cA} \in SAP(G)$.
\end{theorem}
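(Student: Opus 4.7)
The strategy is to use the classical characterization: a function $h\in\Cu(G)$ belongs to $SAP(G)$ if and only if, for every $\eps>0$, the set of $\eps$-almost periods $\{t\in G : \|h-\tau_t h\|_\infty <\eps\}$ is relatively dense in $G$. Setting $h:=\leftbrac f,g\rightbrac_\cA$, continuity is immediate: since $g\in \operatorname{Map}^q_\cA(G)\subseteq BC^q_\cA(G)$ and $g\in L^\infty(G)$, Corollary~\ref{core tebe cu} yields $h\in\Cu(G)$.

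For the almost-period control, I would combine Lemma~\ref{lem conv trans}(a) with conjugate-linearity in the second argument (Proposition~\ref{prop:ip}) to obtain, for every $t\in G$,
\[
\tau_t h - h \;=\; \leftbrac f,\tau_{-t}g\rightbrac_\cA - \leftbrac f,g\rightbrac_\cA \;=\; \leftbrac f,\tau_{-t}g-g\rightbrac_\cA \ts,
\]
and then apply the H\"older-type bound of Lemma~\ref{lemma norm} to deduce
\[
\|\tau_t h - h\|_\infty \;\leqslant\; \|f\|_{b,p,\cA}\cdot \|\tau_{-t}g-g\|_{b,q,\cA}\ts.
\]
Hence it suffices to prove that, for every $\delta>0$, the set $P_\delta:=\{t\in G : \|\tau_{-t}g-g\|_{b,q,\cA}<\delta\}$ is relatively dense; applying this with $\delta=\eps/(1+\|f\|_{b,p,\cA})$ and invoking the characterization recalled above completes the argument.

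The main obstacle is this last step: it is a density of $\|\cdot\|_{b,q,\cA}$-almost periods, whereas the defining property of $\operatorname{MAP}_\cA(G)$ is density of $\|\cdot\|_{b,1,\cA}$-almost periods. Here one exploits that $g$ is bounded: approximate $g$ in $\|\cdot\|_{b,q,\cA}$ by functions $g_n\in\operatorname{MAP}_\cA(G)$, and, using convolution with an approximate identity (together with the corollary of Lemma~\ref{lem:5} stating that $\operatorname{MAP}$ is stable under convolution with $\Cc(G)$), arrange that the $g_n$ are uniformly bounded by some $M$ with $|g_n-\tau_t g_n|\leqslant 2M$. The elementary pointwise bound $|u|^q \leqslant (2M)^{q-1}|u|$ for $|u|\leqslant 2M$ yields
\[
\|u\|_{b,q,\cA}^q \;\leqslant\; (2M)^{q-1}\|u\|_{b,1,\cA}\ts,
\]
which converts the relatively dense $\|\cdot\|_{b,1,\cA}$-almost periods of each $g_n$ into $\|\cdot\|_{b,q,\cA}$-almost periods. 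A triangle inequality in $\|\cdot\|_{b,q,\cA}$, together with the translation invariance of this semi-norm on $BC^q_\cA(G)$ (via the isometry $T_t$ recalled in the proof of Corollary~\ref{core tebe cu}), then transfers relative density of almost periods from $g_n$ to $g$, proving that $P_\delta$ is relatively dense and finishing the proof.
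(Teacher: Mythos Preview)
Your argument is correct and follows essentially the same route as the paper: establish $\leftbrac f,g\rightbrac_\cA\in\Cu(G)$ via Corollary~\ref{core tebe cu}, convert translates via Lemma~\ref{lem conv trans}(a), and bound $\|\tau_t h-h\|_\infty$ by $\|f\|_{b,p,\cA}\cdot\|\tau_{-t}g-g\|_{b,q,\cA}$ using Lemma~\ref{lemma norm}, so that relative density of $\|\cdot\|_{b,q,\cA}$-almost periods of $g$ finishes the proof.

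The only difference is that the paper simply asserts that $g\in\operatorname{Map}^q_\cA(G)$ has relatively dense $\|\cdot\|_{b,q,\cA}$-almost periods (treating this as the working characterization of $\operatorname{Map}^q_\cA$, implicitly from \cite{LSS}), whereas you spell out a justification. Your justification is fine, though slightly over-engineered: the uniform bound $M$ on the approximants and the approximate-identity device are not needed. Each $g_n\in\operatorname{MAP}_\cA(G)\subseteq\Cu(G)$ is automatically bounded, so the interpolation $\|u\|_{b,q,\cA}^q\leqslant(2\|g_n\|_\infty)^{q-1}\|u\|_{b,1,\cA}$ applies to each $g_n$ individually; one then fixes a single $n$ with $\|g-g_n\|_{b,q,\cA}<\delta/3$ and uses the triangle inequality (with translation invariance via $T_t$, exactly as you note) to pass the almost periods from $g_n$ to $g$.
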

\begin{proof}
By Corollary~\ref{core tebe cu}, we have $\leftbrac f , g
\rightbrac_{\cA} \in \Cu(G)$ and hence it is continuous. Now, let
$\eps >0$. Since $g \in Map^p(G)$, the set
\[
P := \Big\{ t \in G\ts :\ts \| T_t g - g \|_{b,q,\cA} < \frac{\eps}{\| f\|_{b,p \cA}+1} \Big\}
\]
is relatively dense.
By Lemma~\ref{lemma norm}, we have
\[
\| T_t \leftbrac f , g \rightbrac_{\cA}- \leftbrac f , g
\rightbrac_{\cA} \|_\infty
    =  \| \leftbrac f , T_{-t}g-g \rightbrac_{\cA}  \|_\infty
    \leqslant \| f \|_{b, p, \cA} \| T_{-t}g-g \|_{b, q, \cA} < \eps
\]
for all $t \in -P$ and all $x \in G$. Since $P$ is relatively dense,
so is $-P$. Thus, $ \leftbrac f , g \rightbrac_{\cA} \in SAP(G)$, as
claimed.
\end{proof}

Once again, we can apply Eq.~\eqref{eq tebe twist com} to see that we can interchange the roles of $f$ and $g$.

\begin{coro}
Let $f \in Map^{p}_{\cA}(G)\cap L^\infty(G)$ and  $g \in
BL^q_{\cA}(G) \cap L^\infty(G)$ with $\frac{1}{p}+\frac{1}{q}=1$. If
$\leftbrac f , g \rightbrac_{\cA}$ exists, then $\leftbrac f , g
\rightbrac_{\cA} \in SAP(G)$.  \qed
\end{coro}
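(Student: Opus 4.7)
The plan is to obtain this corollary as a direct consequence of Theorem~\ref{them tebe is sap} by swapping the roles of $f$ and $g$ via the conjugate symmetry relation~\eqref{eq tebe twist com}. The hypotheses on $f$ and $g$ are exactly those of Theorem~\ref{them tebe is sap} with the two arguments interchanged and with $p,q$ interchanged; since $\frac{1}{p}+\frac{1}{q}=1$ is symmetric in $p$ and $q$, this interchange is admissible.

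More concretely, the first step is to invoke the conjugate symmetry lemma: assuming $\leftbrac f , g \rightbrac_{\cA}$ exists, \eqref{eq tebe twist com} gives that $\leftbrac g , f \rightbrac_{\cA}$ also exists and equals $\widetilde{\leftbrac f , g \rightbrac_{\cA}}$. Now $g \in BL_{\cA}^q(G)\cap L^\infty(G)$ and $f \in \operatorname{Map}_{\cA}^p(G)\cap L^\infty(G)$, so Theorem~\ref{them tebe is sap} (with $(f,g,p,q)$ replaced by $(g,f,q,p)$) applies and yields $\leftbrac g , f \rightbrac_{\cA} \in SAP(G)$.

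The second step is to transfer this back: since $SAP(G)$ is closed under the operation $h \mapsto \widetilde{h}$ (translation by $-t$ of $\widetilde{h}$ equals $\widetilde{\tau_t h}$, so the orbit of $\widetilde{h}$ under translation is just the conjugate-reflection of the orbit of $h$, hence relatively compact in $(\Cu(G),\|\cdot\|_\infty)$ whenever that of $h$ is), we conclude
\[
\leftbrac f , g \rightbrac_{\cA} = \widetilde{\,\widetilde{\leftbrac f , g \rightbrac_{\cA}}\,} = \widetilde{\leftbrac g , f \rightbrac_{\cA}} \in SAP(G) \ts.
\]
There is no substantial obstacle here; the only thing worth double-checking is the trivial fact that $SAP(G)$ is stable under $\widetilde{\,\cdot\,}$, which follows either from the orbit-compactness characterisation above or from the observation that $\widetilde{\chi}=\overline{\chi}$ is again a character so uniform limits of trigonometric polynomials are mapped to uniform limits of trigonometric polynomials.
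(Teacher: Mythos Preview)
Your proof is correct and follows exactly the approach indicated by the paper: apply the conjugate symmetry relation~\eqref{eq tebe twist com} to swap the arguments, invoke Theorem~\ref{them tebe is sap} with the roles of $(f,g,p,q)$ interchanged, and use the stability of $SAP(G)$ under $\widetilde{\,\cdot\,}$ to conclude. The paper states the corollary without proof precisely because this swap is all that is needed.
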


As before, the statements remain true for the standard Eberlein convolution.

\begin{coro}
Let $1 \leqslant p < \infty$ and let $q$ be such that $\frac{1}{p}+\frac{1}{q}=1$.
\begin{itemize}
  \item[(a)]Let $f \in BL^{p}_{\cA}(G)\cap L^\infty(G)$ and $g \in Map^q_{-\cA}(G) \cap L^\infty(G)$. If $f \ebeA g$ exists, then
$f \ebeA g \in SAP(G)$.
  \item[(b)]Let $f \in Map^{p}_{\cA}(G)\cap L^\infty(G)$ and $g \in BL^q_{-\cA}(G) \cap L^\infty(G)$. If $f \ebeA g$ exists, then
$f \ebeA g \in SAP(G)$.  \qed
\end{itemize}
\end{coro}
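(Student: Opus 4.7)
The plan is to reduce both (a) and (b) to the already established results for the twisted Eberlein convolution, namely Theorem~\ref{them tebe is sap} and the corollary following it, via the identity
\[
f \ebeA g \,=\, \leftbrac f , \widetilde g \rightbrac_{\cA} \ts,
\]
which follows from the remark immediately after the definition of the twisted Eberlein convolution, applied with $\widetilde g$ in place of $g$ and using $\widetilde{\widetilde g} = g$. To invoke those results we must transfer the $-\cA$-hypothesis on $g$ into an $\cA$-hypothesis on $\widetilde g$.

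The key preliminary step is to show that the reflection $h \mapsto \widetilde h$ is an isometric bijection between $(BL^r_{-\cA}(G), \|\cdot\|_{b,r,-\cA})$ and $(BL^r_\cA(G), \|\cdot\|_{b,r,\cA})$ for every $r \in [1,\infty)$, preserving $L^\infty(G)$. Indeed, the substitution $t \mapsto -t$ together with $|{-A_n}|=|A_n|$ gives $\|\widetilde h\|_{b,r,\cA} = \|h\|_{b,r,-\cA}$. Next, since $\widetilde{\tau_s h} = \tau_{-s}\widetilde h$, one obtains $\|\tau_s \widetilde h - \widetilde h\|_{b,1,\cA} = \|\tau_{-s}h - h\|_{b,1,-\cA}$, so the set of $\eps$-almost periods of $\widetilde h$ with respect to $\cA$ is the negative of the set of $\eps$-almost periods of $h$ with respect to $-\cA$. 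Negation preserves relative denseness, hence $\widetilde{\cdot}$ maps $\operatorname{MAP}_{-\cA}(G)$ bijectively onto $\operatorname{MAP}_{\cA}(G)$. Taking closures in $BC^q_\cA(G)$ and $BC^q_{-\cA}(G)$, this bijection extends to an isometric bijection between $\operatorname{Map}^q_{-\cA}(G)$ and $\operatorname{Map}^q_\cA(G)$.

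With these transfer properties in hand, part (a) is immediate: from $g \in \operatorname{Map}^q_{-\cA}(G) \cap L^\infty(G)$ we get $\widetilde g \in \operatorname{Map}^q_\cA(G) \cap L^\infty(G)$, the existence of $f \ebeA g$ gives the existence of $\leftbrac f , \widetilde g \rightbrac_{\cA}$, and applying Theorem~\ref{them tebe is sap} to $(f,\widetilde g)$ yields $f \ebeA g = \leftbrac f , \widetilde g \rightbrac_{\cA} \in SAP(G)$. Part (b) is analogous: from $g \in BL^q_{-\cA}(G) \cap L^\infty(G)$ we get $\widetilde g \in BL^q_\cA(G) \cap L^\infty(G)$, and the corollary following Theorem~\ref{them tebe is sap} (which swaps the roles of the two arguments) applied to $(f,\widetilde g)$ delivers the conclusion.

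There is no real obstacle here; the only step requiring care is the reflection correspondence between the $\cA$- and $-\cA$-versions of the Besicovitch and mean almost periodic spaces. Once this correspondence is set up, (a) and (b) are mere reformulations of the already proved twisted versions.
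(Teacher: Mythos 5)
Your proof is correct and follows exactly the route the paper intends for this corollary (which it leaves as an immediate consequence, marked \verb|\qed|): rewrite $f \ebeA g$ as the twisted convolution of $f$ with $\widetilde g$ and invoke Theorem~\ref{them tebe is sap} and its corollary. Your explicit verification that reflection carries $BL^q_{-\cA}(G)$ and $\operatorname{Map}^q_{-\cA}(G)$ isometrically onto the corresponding $\cA$-spaces is precisely the detail the paper suppresses, and it is carried out correctly.
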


Let us complete this section by looking at the space of functions which are convolvable with a fixed function.

\begin{definition}
For $f : G \to \CC$ and a F\o lner sequence $(A_n)_{n\in\NN}$, we define
\begin{align*}
{\mathcal EB}_{\cA}(f)&:= \{ g : G \to \CC\ts :\ts  f \ebeA g \mbox{ is well defined} \} \ts, \\
{\mathcal TEB}_{\cA}(f)&:= \{ g : G \to \CC\ts :\ts  \leftbrac f , g
\rightbrac_{\cA} \mbox{ is well defined} \} \ts.
\end{align*}
\end{definition}

The following is an immediate consequence of Lemma~\ref{lemma norm} and Corollary~\ref{core tebe cu}.

\begin{lemma}
Let $1 \leqslant p < \infty$ and let $q$ be such that $\frac{1}{p}+\frac{1}{q}=1$.
\begin{itemize}
\item [(a)]Let $f \in BC^p_{\cA}(G)$. Then, the mapping
\[
(BL^q_{\cA}(G)\cap L^\infty(G)\cap {\mathcal TEB}_{\cA}(f) , \|
\cdot \|_{b,q,\cA}) \to (\Cu(G), \| \cdot \|_\infty) \ts, \qquad  g
\mapsto \leftbrac f , g \rightbrac_{\cA} \ts,
\]
is continuous.
\item [(b)]Let $f \in BL^p_{\cA}(G)$. Then, the mapping
\[
(BC^q_{\cA}(G)\cap L^\infty(G)\cap {\mathcal TEB}_{\cA}(f) , \|
\cdot \|_{b,q,\cA}) \to (\Cu(G), \| \cdot \|_\infty) \ts, \qquad g
\mapsto \leftbrac f , g \rightbrac_{\cA} \ts,
\]
is continuous.
\item [(c)] Let $f \in BC^p_{\cA}(G)$. Then, the mapping
\[
(BL^q_{-\cA}(G)\cap L^\infty(G)\cap {\mathcal EB}_{\cA}(f) , \| \cdot \|_{b,q,\cA}) \to (\Cu(G), \| \cdot \|_\infty) \ts, \qquad  g \mapsto f \ebeA g \ts,
\]
is continuous.
\item [(d)] Let $f \in BL^p_{\cA}(G)$. Then, the mapping
\[
(BC^q_{-\cA}(G)\cap L^\infty(G), \| \cdot \|_{b,q,\cA}) \to (\Cu(G), \| \cdot \|_\infty) \ts, \qquad g \mapsto f \ebeA g \ts,
\]
is continuous. \qed
\end{itemize}
\end{lemma}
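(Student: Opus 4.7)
The plan is to combine the Cauchy--Schwarz type bound of Lemma~\ref{lemma norm} with the bilinearity of $\leftbrac \cdot , \cdot \rightbrac_{\cA}$ in the second argument. I would prove part~(a) in detail; parts~(b)--(d) follow from (a) by symmetrization.

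First I would verify that the map is well-defined into $\Cu(G)$. For $g$ in the stated domain the existence of $\leftbrac f , g \rightbrac_{\cA}$ is built into $\mathcal{TEB}_{\cA}(f)$, and Corollary~\ref{core tebe cu} (applied after noting $BC^{p}_{\cA}(G) \subseteq BL^{p}_{\cA}(G)$) supplies $\leftbrac f , g \rightbrac_{\cA} \in \Cu(G)$.

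Continuity then reduces to a one-line Lipschitz estimate. For any $g_1, g_2$ in the domain, linearity of the defining mean yields
\[
\leftbrac f , g_1 \rightbrac_{\cA} - \leftbrac f , g_2 \rightbrac_{\cA} = \leftbrac f , g_1 - g_2 \rightbrac_{\cA}.
\]
Since $g_1 - g_2 \in L^{\infty}(G)$, the ``in particular'' clause of Lemma~\ref{lemma norm} gives
\[
\bigl\| \leftbrac f , g_1 \rightbrac_{\cA} - \leftbrac f , g_2 \rightbrac_{\cA} \bigr\|_{\infty} \leqslant \| f \|_{b,p,\cA}\, \| g_1 - g_2 \|_{b,q,\cA},
\]
so the map is Lipschitz with constant $\| f \|_{b,p,\cA}$, in particular continuous.

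Parts~(b), (c), (d) are obtained by symmetrization. For~(b) one applies the conjugate-symmetry relation~\eqref{eq tebe twist com} together with the fact that $h \mapsto \widetilde{h}$ is an isometry of $(\Cu(G), \| \cdot \|_{\infty})$, reducing to~(a). For~(c) and~(d) one uses the identity $f \ebeA g = \leftbrac f , \widetilde{g} \rightbrac_{\cA}$ combined with the observation that $g \mapsto \widetilde{g}$ provides an isometry between the $(-\cA)$- and $\cA$-versions of both the $BL^{q}$- and $BC^{q}$-semi-norms, thereby transferring (a), (b) to the untwisted setting. There is no substantial obstacle: once Lemma~\ref{lemma norm} is in hand, the entire argument is direct bookkeeping.
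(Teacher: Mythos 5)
Your proposal is correct and follows exactly the route the paper intends: the paper gives no written proof, stating only that the lemma is an immediate consequence of Lemma~\ref{lemma norm} (the Cauchy--Schwarz type bound, applied to $g_1-g_2$ via linearity to get the Lipschitz estimate) and Corollary~\ref{core tebe cu} (for landing in $\Cu(G)$), with the remaining parts handled by conjugate symmetry and the $\cA \leftrightarrow -\cA$ switch, just as you do.
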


\subsection{The twisted Eberlein convolution of Besicovitch almost periodic functions} \label{subsec:bes}
In the preceding sections we have been concerned with properties of
the (twisted) Eberlein convolution. There we have always assumed
that the twisted Eberlein convolution exists. Here, we  tackle the
issue of existence  and show that the twisted Eberlein convolution
of bounded Besicovitch almost periodic functions exists.

\medskip

We start with the following lemma, which was proven in a particular case in \cite{BS}. Since the proof for the general case is identical and straightforward, we skip it.

\begin{lemma}\label{bslemma} \cite[Lem. 4.1]{BS}
Let $\cA$ be a van Hove sequence, let $f\in L_{\text{loc}}^1(G)$, and let $\chi\in\widehat{G}$. The following statements are equivalent:
\begin{enumerate}
\item[(i)] $a^{\cA}_{\chi}(f)$ exists.
\item[(ii)] $\leftbrac f , \chi \rightbrac_{\cA}$ exists.
\item[(iii)] There is a $t\in G$ such that $\leftbrac f , \chi \rightbrac_{\cA}(t)$ exists.
\end{enumerate}
In that case, one has
\[
\leftbrac f , \chi \rightbrac_{\cA}(t) = \chi(t)\ts a^{\cA}_{\chi}(f)
\ts.        \tag*{\qed}
\]
\end{lemma}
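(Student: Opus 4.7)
The plan is to exploit the multiplicativity of the character $\chi$ to factor the $t$-dependence out of the defining integral for $\leftbrac f , \chi \rightbrac_{\cA}(t)$, which reduces everything to the Fourier--Bohr integral at $t=0$.

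Concretely, for each $t\in G$ and each $n\in\NN$, I would use $\chi(s-t)=\chi(s)\overline{\chi(t)}$ (and $|\chi(t)|=1$) to write
\[
\frac{1}{|A_n|}\int_{A_n} f(s)\,\overline{\chi(s-t)}\,\dd s
 \;=\; \chi(t)\cdot\frac{1}{|A_n|}\int_{A_n} f(s)\,\overline{\chi(s)}\,\dd s \ts.
\]
Since $\chi(t)$ is a nonzero scalar independent of $n$, the sequence on the left converges (for that particular $t$) if and only if the sequence on the right converges, and in that case
\[
\leftbrac f , \chi \rightbrac_{\cA}(t) \;=\; \chi(t)\ts a^{\cA}_\chi(f) \ts.
\]

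From this single identity the three implications fall out immediately: (i)$\Rightarrow$(ii) follows because if $a^{\cA}_\chi(f)$ exists then the displayed identity shows $\leftbrac f , \chi \rightbrac_{\cA}(t)$ exists for every $t\in G$ with the prescribed value; (ii)$\Rightarrow$(iii) is trivial; and (iii)$\Rightarrow$(i) follows because if the limit exists for some particular $t$, dividing by $\chi(t)\neq 0$ recovers convergence of the Fourier--Bohr averages. The asserted formula is exactly what the displayed identity records.

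There is no real obstacle here beyond bookkeeping with the character identity and remembering that $\chi(t)$ is a unimodular constant that can be pulled outside both the integral and the limit. No approximation, no van Hove boundary estimate, and no use of almost periodicity is needed; the argument is purely algebraic once the character property is invoked.
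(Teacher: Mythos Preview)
Your proof is correct and is exactly the intended argument; the paper itself omits the proof as straightforward (citing \cite{BS}), and the only reasonable route is the character factorization $\overline{\chi(s-t)}=\chi(t)\,\overline{\chi(s)}$ that you use.
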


\begin{remark}
By Lemma~\ref{bslemma}, for all $\chi \in \widehat{G}$ we have $\leftbrac \chi , \chi \rightbrac_{\cA}  = \chi$. Moreover, for all $\chi, \psi \in \widehat{G}$ with $\chi \neq \psi$ we have the orthogonality condition $\leftbrac \chi , \psi \rightbrac_{\cA} = 0$ .
\end{remark}

As an immediate consequence, we get the following result, compare \cite[Cor.~4.2]{BS}.

\begin{coro}
Let $f\in L_{\text{loc}}^1(G)$. Then, $a_{\chi}^{\cA}(f)$ exists for
all $\chi\in\widehat{G}$ if and only if $\leftbrac f , P
\rightbrac_{\cA}$ exists for all trigonometric polynomials
$P=\sum_{k=1}^nc_k\chi_k$.

In that case, one has
\[
\leftbrac f , P \rightbrac_{\cA} = \sum_{k=1}^n
a_{\chi_k}^{\cA}(f)\ts \overline{a_{\chi_k}^{\cA}(P)}\ts \chi_k \ts.  \tag*{\qed}
\]
\end{coro}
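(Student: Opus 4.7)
The plan is to invoke Lemma~\ref{bslemma} for each character and then use (sesqui-)linearity of the twisted Eberlein convolution in its second argument, together with the standard orthogonality of characters with respect to the mean along van Hove sequences. Both directions will then follow cleanly, and the identification of the coefficients $c_k$ with $a_{\chi_k}^{\cA}(P)$ will complete the formula.

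For the implication ``($\Leftarrow$)'' I would observe that a single character $\chi \in \widehat{G}$ is itself a trigonometric polynomial (with $n=1$ and $c_1=1$). Hence the hypothesis applies, giving existence of $\leftbrac f,\chi \rightbrac_{\cA}$, and then Lemma~\ref{bslemma} yields existence of $a_\chi^{\cA}(f)$.

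For ``($\Rightarrow$)'' I would start from the definition and use conjugate-linearity in the second argument:
\[
\frac{1}{|A_n|} \int_{A_n} f(s)\ts\overline{P(s-t)}\ \dd s = \sum_{k=1}^n \overline{c_k} \ts \frac{1}{|A_n|} \int_{A_n} f(s)\ts \overline{\chi_k(s-t)}\ \dd s \ts.
\]
By Lemma~\ref{bslemma}, existence of $a_{\chi_k}^{\cA}(f)$ implies existence of $\leftbrac f, \chi_k \rightbrac_{\cA}$ for every $k$, so the right-hand side converges to $\sum_k \overline{c_k}\ts \leftbrac f,\chi_k \rightbrac_{\cA}(t)$. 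Using the formula $\leftbrac f,\chi_k \rightbrac_{\cA}(t) = \chi_k(t)\ts a_{\chi_k}^{\cA}(f)$ from Lemma~\ref{bslemma}, I obtain
\[
\leftbrac f, P \rightbrac_{\cA} = \sum_{k=1}^n \overline{c_k}\ts a_{\chi_k}^{\cA}(f)\ts \chi_k \ts.
\]

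To match the stated formula, it remains to identify $\overline{c_k}$ with $\overline{a_{\chi_k}^{\cA}(P)}$. Assuming without loss of generality that the $\chi_k$ are pairwise distinct, this is the orthogonality relation $a_{\chi_k}^{\cA}(\chi_j) = \delta_{jk}$, which follows from translation invariance of the mean along van Hove sequences: applying $\tau_t$ to a nontrivial character $\xi = \chi_j \overline{\chi_k}$ ($j\ne k$) gives $M_{\cA}(\xi)=\overline{\xi(t)}\ts M_{\cA}(\xi)$ for every $t$, forcing $M_{\cA}(\xi)=0$, while $a_{\chi_k}^\cA(\chi_k)=M_{\cA}(1)=1$. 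Hence $a_{\chi_k}^{\cA}(P)=c_k$, and the formula follows. I do not expect any real obstacle here, since every substantive ingredient has already been established; the only minor point to be careful about is justifying the orthogonality of characters, which is a standard consequence of the van Hove property, and reducing to the distinct-character case (which can be done by collecting equal characters in $P$).
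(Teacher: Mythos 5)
Your proof is correct and is exactly the argument the paper has in mind: the corollary is stated as an immediate consequence of Lemma~\ref{bslemma} with the proof omitted, and your route (single characters for the reverse direction, conjugate-linearity plus Lemma~\ref{bslemma} termwise for the forward direction, then character orthogonality via translation invariance of the mean to identify $c_k=a_{\chi_k}^{\cA}(P)$) is the intended one. The only care needed is the reduction to pairwise distinct $\chi_k$, which you address.
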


We can now prove the following result.

\begin{prop}\label{prop:tebe-bap-exists}
Let $f \in BL^2_{\cA}(G)$ be so that for all $\chi \in \widehat{G}$
the Fourier--Bohr coefficient $a_{\chi}^\cA(f)$ exists and let $g
\in Bap_{\cA}^2(G)\cap L^\infty(G)$. Then, $\leftbrac f , g
\rightbrac_{\cA}$ exists, $\leftbrac f , g \rightbrac_{\cA}\in
SAP(G)$ and
\[
a_{\chi}^\cA(\leftbrac f , g \rightbrac_{\cA})= a_{\chi}^\cA(f) \ts
\overline{a_{\chi}^\cA(g)}
\]
\end{prop}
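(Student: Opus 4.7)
The plan is to approximate $g$ by trigonometric polynomials $(P_n)_{n}$ in the seminorm $\|\cdot\|_{b,2,\cA}$, which is possible since $Bap_{\cA}^2(G)$ is (by definition) the $\|\cdot\|_{b,2,\cA}$-closure of the trigonometric polynomials in $BL^2_{\cA}(G)$. For each $P_n$ the twisted Eberlein convolution $\leftbrac f, P_n\rightbrac_{\cA}$ already exists, and by Lemma~\ref{bslemma} together with conjugate linearity in the second argument it is itself a trigonometric polynomial, hence lies in $SAP(G)$. The strategy is then to show that $\leftbrac f, P_n\rightbrac_{\cA}$ converges uniformly to some $h \in SAP(G)$ and to identify $h(t)$ with the pointwise limit defining $\leftbrac f, g\rightbrac_{\cA}(t)$.

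First I would verify that $(\leftbrac f, P_n\rightbrac_{\cA})_n$ is Cauchy in $\|\cdot\|_\infty$. Writing
\[
\leftbrac f, P_n\rightbrac_{\cA} - \leftbrac f, P_m\rightbrac_{\cA} = \leftbrac f, P_n - P_m\rightbrac_{\cA},
\]
Lemma~\ref{lemma norm} (with $p=q=2$) bounds this uniformly by $\|f\|_{b,2,\cA}\|P_n - P_m\|_{b,2,\cA}$, which tends to zero. Since $SAP(G)$ is closed in $(\Cu(G),\|\cdot\|_\infty)$, the uniform limit $h$ lies in $SAP(G)$.

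The main step, and the one that needs the most care, is the pointwise identification $\leftbrac f, g\rightbrac_{\cA}(t) = h(t)$. For fixed $t \in G$ and auxiliary $k$, I would split
\[
\tfrac{1}{|A_n|}\!\int_{A_n}\!\! f(s)\overline{g(s-t)}\,ds - h(t)
\]
into three pieces by inserting $\pm\tfrac{1}{|A_n|}\int_{A_n} f(s)\overline{P_k(s-t)}\,ds$ and $\pm\leftbrac f, P_k\rightbrac_{\cA}(t)$: a ``tail'' piece with integrand $f\cdot\overline{\tau_t(g-P_k)}$, a piece measuring convergence of the averages for $P_k$, and the uniform error $\|h - \leftbrac f, P_k\rightbrac_{\cA}\|_\infty$. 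The last two become arbitrarily small by first choosing $k$ large (for the uniform error) and then $n$ large (by existence of $\leftbrac f, P_k\rightbrac_{\cA}$). For the tail, Cauchy--Schwarz on the probability space $(A_n, \tfrac{1}{|A_n|}ds)$ yields the bound
\[
\Big(\tfrac{1}{|A_n|}\!\int_{A_n}\!\!|f|^2\Big)^{1/2}\Big(\tfrac{1}{|A_n|}\!\int_{A_n}\!\!|(g-P_k)(s-t)|^2\,ds\Big)^{1/2}.
\]
The main obstacle is the $t$-translate in the second factor, which I would handle by observing that, since $g - P_k \in L^\infty(G)$, the van Hove property of $\cA$ gives $|A_n \triangle (A_n - t)|/|A_n| \to 0$, so the $\limsup$ of the second factor equals $\|g-P_k\|_{b,2,\cA}$. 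Consequently $\limsup_n (\text{tail}) \leqslant \|f\|_{b,2,\cA}\|g-P_k\|_{b,2,\cA}$, which vanishes as $k \to \infty$. This gives both existence of $\leftbrac f, g\rightbrac_{\cA}$ pointwise and the identification with $h \in SAP(G)$.

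For the Fourier--Bohr formula I would pass to the limit in the identity $a_\chi^{\cA}(\leftbrac f, P_n\rightbrac_{\cA}) = a_\chi^{\cA}(f)\,\overline{a_\chi^{\cA}(P_n)}$, which is obtained from Lemma~\ref{bslemma} by linearity. On the left, uniform convergence preserves Fourier--Bohr coefficients since $|a_\chi^\cA(F) - a_\chi^\cA(F')| \leqslant \|F - F'\|_\infty$. On the right, $a_\chi^{\cA}(P_n) \to a_\chi^{\cA}(g)$ because $|a_\chi^{\cA}(P_n - g)| \leqslant \|P_n - g\|_{b,1,\cA} \leqslant \|P_n - g\|_{b,2,\cA} \to 0$, where the last inequality is Cauchy--Schwarz on each $A_n$. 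Combining yields $a_\chi^\cA(\leftbrac f, g\rightbrac_{\cA}) = a_\chi^\cA(f)\,\overline{a_\chi^\cA(g)}$, as claimed.
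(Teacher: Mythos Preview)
Your proof is correct and follows essentially the same route as the paper's: approximate $g$ by trigonometric polynomials in $\|\cdot\|_{b,2,\cA}$, use Lemma~\ref{lemma norm} to show $(\leftbrac f,P_n\rightbrac_{\cA})_n$ is uniformly Cauchy with limit $h\in SAP(G)$, identify $h$ with $\leftbrac f,g\rightbrac_{\cA}$ via the same three-term splitting (handling the translate in the tail term via the F{\o}lner property and $g-P_k\in L^\infty$), and pass to the limit in the Fourier--Bohr identity. The only cosmetic difference is that the paper invokes \cite[Lem.~3.5]{LSS} to choose the $P_n$ uniformly bounded and \cite[Cor.~3.9]{LSS} for $a_\chi^{\cA}(P_n)\to a_\chi^{\cA}(g)$, whereas you supply these steps directly; your observation that boundedness of $g-P_k$ for each fixed $k$ suffices is correct and makes the uniform bound on $\|P_n\|_\infty$ unnecessary.
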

\begin{proof}
By \cite[Lem. 3.5]{LSS}, there is a sequence $(P_n)_{n\in\NN}$ of trigonometric polynomials such that
\[
\|g-P_n\|_{b,2,\cA} \leqslant \frac{1}{n} \qquad \text{ and } \qquad \|P_n\|_{\infty} \leqslant \|g\|_{\infty} + 1 \ts.
\]
Also, \cite[Cor.~3.9]{LSS} implies $a_{\chi}^{\cA}(P_n)\xrightarrow{n\to\infty} a_{\chi}^{\cA}(g)$. Said corollary can be applied, since one has $\|g-P_n\|_{b,1,\cA} \leqslant \|g-P_n\|_{b,2,\cA}$.

Note that $g-P_n$ is bounded. Hence, we can apply Lemma~\ref{lemma norm} and get
\[
\| \leftbrac f , P_n-P_m \rightbrac_{\cA} \|_{\infty} \leqslant
\|f\|_{b,2,\cA} \ts \| P_n-P_m\|_{b,2,\cA} \ts.
\]
As
\[
\|P_n-P_m\|_{b,2,\cA} \leqslant \| P_n-g\|_{b,2,\cA} + \|g-P_m\|_{b,2,\cA} \leqslant \frac{1}{n} + \frac{1}{m} \ts,
\]
we find that $(\leftbrac f , P_n \rightbrac_{\cA})_{n\in\NN}$ is a
Cauchy sequence (of trigonometric polynomials, see previous
corollary) in $(SAP(G),\|\cdot\|_{\infty})$. Hence, it converges to
a function $h\in SAP(G)$.

Note that the first two assertions now follow from the following claim: For all $t\in G$, one has
\[
\lim_{m\to\infty} \frac{1}{|A_m|} \int_{A_m} f(s) \ts \overline{g(s-t)} \ \dd s= h(t) \ts.
\]
To prove the claim, let $t\in G$ and $\eps>0$. Pick $n\in \NN$ such that
\[
\| \leftbrac f , P_n \rightbrac_{\cA} - h \|_{\infty} <
\frac{\eps}{3} \qquad \text{ and } \qquad \|P_n-g\|_{b,2,\cA} <
\frac{\eps}{6\cdot(\|f\|_{b,2,\cA}+1)} \ts.
\]
Since $g$ is bounded, we also have $\|T_tP_n-T_tg\|_{b,2,\cA} < \frac{\eps}{6\cdot (\|f\|_{b,2,\cA}+1)}$. Therefore, there is $M_1\in \NN$ such that
\[
\frac{1}{|A_m|} \int_{A_m} \Big|\overline{P_n(s-t)} - \overline{g(s-t)} \Big|^2\ \dd s < \frac{\eps}{3\cdot(\|f\|_{b,2,\cA}+1)}
\]
for all $m>M_1$. At the same time, there is $M_2\in\NN$ such that
\[
\frac{1}{|A_m|} \int_{A_m} |f(s)|^2 \ \dd s < \|f\|_{b,2,\cA} + 1
\]
for all $m>M_2$. Together, the previous two inequalities and Cauchy--Schwarz' inequality yield
\[
\left| \frac{1}{|A_m|} \int_{A_m} f(s) \big( \overline{P_n(s-t)} - \overline{g(s-t)} \big) \ \dd s \right| \leqslant (\|f\|_{b,2,\cA}+1)\cdot  \frac{\eps}{3\cdot(\|f\|_{b,2,\cA}+1)} = \frac{\eps}{3}
\]
for all $m>M:=\max\{M_1,M_2\}$. Also, there is $M_3\in\NN$ such that
\[
\left| \frac{1}{|A_m|} \int_{A_m} f(s)\ts \overline{P_n(s-t)}\ \dd s
- \leftbrac f , P_n \rightbrac_{\cA}(t) \right| < \frac{\eps}{3}
\]
for all $m>M_3$. Putting everything together, we obtain
\begin{align*}
\left| \frac{1}{|A_m|} \int_{A_m} f(s)\ts \overline{g(s-t)}\
       \dd s-h(t) \right|
    &\leqslant \left| \frac{1}{|A_m|} \int_{A_m} f(s) \big(
      \overline{g(s-t)} - \overline{P_n(s-t)} \big)\dd s\right|   \\[2mm]
    &\phantom{XXXX}+ \left| \frac{1}{|A_m|} \int_{A_m} f(s)\ts
    \overline{P_n(s-t)} - \leftbrac f , P_n \rightbrac_{\cA}(t)
     \right|  \\[2mm]
    &\phantom{XXXX}+ |\leftbrac f, P_n \rightbrac_{\cA}(t)
      - h(t)| \\
    &<\frac{\eps}{3} + \frac{\eps}{3} + \frac{\eps}{3} = \eps \ts.
\end{align*}
Hence, the claim follows, and the first two assertions are proved.

Finally, since $\leftbrac f, P_n \rightbrac_{\cA}
\xrightarrow{n\to\infty} \leftbrac f, g \rightbrac_{\cA}$ with
respect to $\|\cdot\|_{\infty}$, we find
\[
a_{\chi}^{\cA}(\leftbrac f, g \rightbrac_{\cA})
    = \lim_{n\to\infty} \leftbrac f, P_n \rightbrac_{\cA}
     = \lim_{n\to\infty} a_{\chi}^{\cA}(f) \ts \overline{a_{\chi}^{\cA}(P_n)}
     = a_{\chi}^{\cA}(f)\ts \overline{a_{\chi}^{\cA}(g)} \ts.  \qedhere
\]
\end{proof}

Noting that in the proof of Proposition~\ref{prop:tebe-bap-exists} we only
used the boundedness of $g$ to deduce the translation invariance of
its Besicovitch norm, the following result is identical and we skip
its proof. For the definition and properties of Weyl almost periodic functions we refer the reader to \cite{LSS}.

\begin{prop}
Let $f \in BL^2_{\cA}(G)$ be so that for all $\chi \in \widehat{G}$
the Fourier--Bohr coefficient $a_{\chi}^\cA(f)$ exists and let $g$ be Weyl 2-almost periodic. Then, $\leftbrac f , g \rightbrac_{\cA}$
exists, $\leftbrac f , g \rightbrac_{\cA}\in SAP(G)$ and
\[
a_{\chi}^\cA(\leftbrac f , g \rightbrac_{\cA})= a_{\chi}^\cA(f) \ts
\overline{a_{\chi}^\cA(g)}  \,. \tag*{\qed}
\]
\end{prop}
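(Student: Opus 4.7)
The plan is to mimic the proof of Proposition~\ref{prop:tebe-bap-exists} almost line by line, substituting the role played there by the boundedness of $g$ with the built-in translation invariance of the Weyl 2-norm.

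First I would invoke the density of trigonometric polynomials in the space of Weyl 2-almost periodic functions with respect to the Weyl norm (this is the analogue of \cite[Lem.~3.5]{LSS} used before). This produces a sequence $(P_n)_{n\in\NN}$ of trigonometric polynomials that approximates $g$ in the Weyl 2-norm, and whose Fourier--Bohr coefficients at each $\chi$ converge to $a_\chi^\cA(g)$. Since the Weyl 2-norm dominates the Besicovitch 2-norm $\|\cdot\|_{b,2,\cA}$, we in particular have $\|g - P_n\|_{b,2,\cA}\to 0$, and via Lemma~\ref{lemma norm} we obtain
\[
\| \leftbrac f , P_n - P_m \rightbrac_{\cA} \|_{\infty} \leqslant \|f\|_{b,2,\cA}\,\|P_n - P_m\|_{b,2,\cA},
\]
so $(\leftbrac f, P_n\rightbrac_\cA)_{n\in \NN}$ is a Cauchy sequence in $(SAP(G),\|\cdot\|_\infty)$, with limit some $h\in SAP(G)$.

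Next I would show that $h(t) = \lim_m \frac{1}{|A_m|}\int_{A_m} f(s)\overline{g(s-t)}\,\dd s$ for every $t\in G$, by the same $\eps/3$ decomposition as in the proof of Proposition~\ref{prop:tebe-bap-exists}. Here is where the key obstacle (and the only point where the old proof appealed to boundedness of $g$) appears: we must control
\[
\frac{1}{|A_m|}\int_{A_m}\bigl|\overline{P_n(s-t)}-\overline{g(s-t)}\bigr|^2 \,\dd s
\]
in the limit $m\to\infty$. For bounded $g$, \cite[Prop.~1.20]{LSS} gave $T_t[g]=[\tau_tg]$, which yielded translation invariance of $\|\cdot\|_{b,2,\cA}$ on $g$. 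In the Weyl setting this step is free: the Weyl 2-norm is by construction translation invariant, and the approximation $P_n\to g$ survives translation, so $\|T_t P_n - T_t g\|_{b,2,\cA}\leqslant \|T_t P_n - T_t g\|_{W,2,\cA}=\|P_n-g\|_{W,2,\cA}$ can be made arbitrarily small, uniformly in $t$. This is the main (and only nontrivial) departure from the previous argument.

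With this translation-invariant control in hand, the rest goes through verbatim: a Cauchy--Schwarz estimate bounds the error coming from replacing $g$ by $P_n$, a choice of $m$ large enough makes $\bigl|\frac{1}{|A_m|}\int_{A_m} f(s)\overline{P_n(s-t)}\,\dd s - \leftbrac f,P_n\rightbrac_\cA(t)\bigr|<\eps/3$, and the approximation $\|\leftbrac f, P_n\rightbrac_\cA - h\|_\infty<\eps/3$ disposes of the remaining term. This proves existence of $\leftbrac f,g\rightbrac_\cA$ and identifies it with $h\in SAP(G)$. Finally, the Fourier--Bohr coefficient formula follows from the uniform convergence $\leftbrac f,P_n\rightbrac_\cA\to \leftbrac f,g\rightbrac_\cA$ combined with Lemma~\ref{bslemma} applied to the trigonometric polynomials $P_n$ and the fact that $a_\chi^\cA(P_n)\to a_\chi^\cA(g)$.
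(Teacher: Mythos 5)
Your proposal is correct and is essentially the paper's own argument: the authors skip the proof of this proposition, remarking only that the proof of Proposition~\ref{prop:tebe-bap-exists} used boundedness of $g$ solely to obtain translation invariance of its Besicovitch norm, and that for Weyl $2$-almost periodic $g$ this is supplied by the translation invariance of the Weyl norm (which dominates $\|\cdot\|_{b,2,\cA}$). You have correctly identified that single point of departure and filled in the rest verbatim, so nothing further is needed.
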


Proposition~\ref{prop:tebe-bap-exists} has the following important consequence:
\begin{theorem}
Let $f,g  \in Bap_{\cA}^2(G)$ be so that $g \in L^\infty(G)$. Then,
$\leftbrac f , g \rightbrac_{\cA}$ exists, $\leftbrac f , g
\rightbrac_{\cA}\in SAP(G)$ and
\[
a_{\chi}^\cA(\leftbrac f , g \rightbrac_{\cA})= a_{\chi}^\cA(f) \ts
\overline{a_{\chi}^\cA(g)}\ts.  \tag*{\qed}
\]
\end{theorem}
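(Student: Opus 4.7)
The plan is to reduce this directly to Proposition~\ref{prop:tebe-bap-exists}, which already gives the three desired conclusions (existence of $\leftbrac f, g \rightbrac_{\cA}$, membership in $SAP(G)$, and the Consistent Phase Property for Fourier--Bohr coefficients) under the hypotheses that $f \in BL^2_{\cA}(G)$ has well-defined Fourier--Bohr coefficients at every character and that $g \in Bap_{\cA}^2(G)\cap L^\infty(G)$. The assumption on $g$ in the theorem already matches the proposition verbatim, so the only thing to check is that an $f \in Bap_{\cA}^2(G)$ satisfies the proposition's hypotheses on the first argument.

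First, since $Bap_{\cA}^2(G)$ is the closure of the trigonometric polynomials in $(BL^2_{\cA}(G), \|\cdot\|_{b,2,\cA})$, we have $Bap_{\cA}^2(G)\subseteq BL^2_{\cA}(G)$ automatically. Second, I would invoke the fact that every Besicovitch 2-almost periodic function has well-defined Fourier--Bohr coefficients at every $\chi \in \widehat{G}$. This is the statement recalled right after the definition of Fourier--Bohr coefficients in Section~2 (``The Fourier--Bohr coefficients exist for Besicovitch almost periodic functions and measures \cite{LSS}'') and it follows from the fact that for a sequence $(P_n)_{n\in\NN}$ of trigonometric polynomials approximating $f$ in $\|\cdot\|_{b,2,\cA}$, the sequence $(a^{\cA}_{\chi}(P_n))_{n\in\NN}$ is Cauchy by \cite[Cor.~3.9]{LSS} (using $\|\cdot\|_{b,1,\cA}\leqslant \|\cdot\|_{b,2,\cA}$) and its limit is precisely $a^{\cA}_{\chi}(f)$.

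With these two observations in hand, Proposition~\ref{prop:tebe-bap-exists} applies and yields all three conclusions of the theorem at once: the twisted Eberlein convolution $\leftbrac f, g \rightbrac_{\cA}$ exists on all of $G$, lies in $SAP(G)$, and satisfies the Consistent Phase Property $a^{\cA}_{\chi}(\leftbrac f, g\rightbrac_{\cA}) = a^{\cA}_{\chi}(f)\ts \overline{a^{\cA}_{\chi}(g)}$ for every $\chi \in \widehat{G}$.

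There is no genuine obstacle here; the entire content of the statement has already been extracted in the previous proposition, and the role of the theorem is simply to record the natural specialization in which both $f$ and $g$ belong to the Besicovitch class. The only minor point worth remarking on is that we need $g$ (but not $f$) to be essentially bounded, which is exactly as in Proposition~\ref{prop:tebe-bap-exists}.
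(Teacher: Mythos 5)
Your proof is correct and takes exactly the route the paper intends: the theorem is stated there as an immediate consequence of Proposition~\ref{prop:tebe-bap-exists} (with no separate proof given), obtained precisely by noting that $Bap_{\cA}^2(G)\subseteq BL_{\cA}^2(G)$ and that Besicovitch almost periodic functions have well-defined Fourier--Bohr coefficients at every character. Your extra remark justifying the latter fact via approximation by trigonometric polynomials and \cite[Cor.~3.9]{LSS} is a correct filling-in of a detail the paper leaves to the cited reference.
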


With the help of the twisted Eberlein convolution a main result
of \cite{LSS} can be reformulated as follows (compare
\cite[Cor.~3.21]{LSS}).

\begin{theorem}
Let $f \in L^2_{\operatorname{loc}}(G) \cap L^\infty(G)$. Then, the following are equivalent:
\begin{itemize}
\item[(i)] $f \in Bap_{\cA}^2(G)$.
\item[(ii)]   \begin{itemize}
\item[(a)]$\leftbrac f , \chi \rightbrac_{\cA}$
exists for any character $\chi$.
\item[(b)] $\leftbrac f , f \rightbrac_{\cA}$ exists and $\leftbrac f , f \rightbrac_{\cA} \in SAP(G)$.
\item[(c)] For any character $\chi$, the equality
\[
a_{\chi}^\cA(\leftbrac f , f \rightbrac_{\cA})=|\leftbrac f , \chi \rightbrac_{\cA}|^2
\]
holds.
\item[(d)]  The identity
\[
\leftbrac f , f \rightbrac_{\cA} (0) = \sum_{\chi} |\leftbrac f , \chi \rightbrac_{\cA} |^2
\]
holds.  \qed
\end{itemize}
\end{itemize}
\end{theorem}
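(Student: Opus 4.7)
The plan is to establish the two implications separately, relying mainly on Proposition~\ref{prop:tebe-bap-exists}, Lemma~\ref{bslemma} and the identity $\leftbrac f, f \rightbrac_{\cA}(0) = \|f\|_{b,2,\cA}^2$ from the earlier Besicovitch-norm lemma. The key translation device that I will invoke throughout is Lemma~\ref{bslemma}: for any character $\chi$, the existence of $\leftbrac f, \chi \rightbrac_{\cA}$ is equivalent to the existence of $a_\chi^{\cA}(f)$, and in that case $\leftbrac f, \chi \rightbrac_{\cA}(t) = \chi(t)\, a_\chi^{\cA}(f)$, so $|\leftbrac f, \chi \rightbrac_{\cA}|$ equals the constant $|a_\chi^{\cA}(f)|$.

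For the direction (i)~$\Rightarrow$~(ii), I will use that every character lies in $Bap_{\cA}^2(G) \cap L^\infty(G)$. Hence Proposition~\ref{prop:tebe-bap-exists} applied with $g = \chi$ gives (a), and applied with $g = f$ yields (b) together with the identity $a_\chi^{\cA}(\leftbrac f, f \rightbrac_{\cA}) = a_\chi^{\cA}(f)\overline{a_\chi^{\cA}(f)} = |a_\chi^{\cA}(f)|^2$; combined with the translation above this is (c). For (d), I will combine $\leftbrac f, f \rightbrac_{\cA}(0) = \|f\|_{b,2,\cA}^2$ with the classical Parseval identity $\|f\|_{b,2,\cA}^2 = \sum_\chi |a_\chi^{\cA}(f)|^2$ for Besicovitch-$2$-almost periodic functions, which is part of the underlying content of \cite[Cor.~3.21]{LSS}.

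For the less immediate direction (ii)~$\Rightarrow$~(i), I will construct trigonometric polynomial approximants of $f$ explicitly. By (a) and Lemma~\ref{bslemma}, $a_\chi^{\cA}(f)$ exists for every $\chi$; since $\|f\|_\infty < \infty$ gives $\|f\|_{b,2,\cA}^2 < \infty$, the summability in (d) forces the spectrum $S = \{\chi : a_\chi^{\cA}(f) \neq 0\}$ to be countable. Enumerating $S = \{\chi_i\}_{i\in\NN}$ and setting $P_N = \sum_{i=1}^N a_{\chi_i}^{\cA}(f)\,\chi_i$, I will expand $|f - P_N|^2 = |f|^2 - f\overline{P_N} - \overline{f}P_N + |P_N|^2$ and average over $A_n$. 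Each of the four terms converges along $\cA$: the $|f|^2$ term by (b), the cross terms by (a), and the $|P_N|^2$ term by the classical orthogonality $M_{\cA}(\chi_i\overline{\chi_j}) = \delta_{ij}$. The limits telescope to $\|f\|_{b,2,\cA}^2 - \sum_{i=1}^N |a_{\chi_i}^{\cA}(f)|^2$, which by (d) tends to $0$ as $N \to \infty$, so $f$ lies in the $\|\cdot\|_{b,2,\cA}$-closure of the trigonometric polynomials and hence in $Bap_{\cA}^2(G)$.

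The main obstacle is ensuring the simultaneous existence of every mean appearing in the expansion of $\|f - P_N\|_{b,2,\cA}^2$, which is what justifies passing to the limit term by term; precisely these existences are guaranteed by hypotheses (a) and (b), so once they are invoked the argument collapses to the standard Parseval computation in the Besicovitch inner product.
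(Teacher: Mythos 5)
Your argument is correct, but it takes a genuinely different route from the paper, which offers no proof at all here: the theorem is stated with a \qed as a direct reformulation of \cite[Cor.~3.21]{LSS}, the translation between the two formulations being precisely the dictionary you set up at the outset (Lemma~\ref{bslemma} turning $\leftbrac f , \chi \rightbrac_{\cA}$ into the constant-modulus function $\chi\cdot a_{\chi}^{\cA}(f)$, and the identity $\leftbrac f , f \rightbrac_{\cA}(0)=\|f\|_{b,2,\cA}^2$). What you supply instead is a self-contained reconstruction from the tools of this paper: (i)$\Rightarrow$(ii) via Proposition~\ref{prop:tebe-bap-exists} applied with $g=\chi$ and $g=f$ (note $f\in L^\infty(G)$ gives $f\in BL^2_{\cA}(G)$, so the proposition applies), and (ii)$\Rightarrow$(i) via the explicit Parseval computation $\|f-P_N\|_{b,2,\cA}^2=\leftbrac f , f \rightbrac_{\cA}(0)-\sum_{i\leqslant N}|a_{\chi_i}^{\cA}(f)|^2$, where hypotheses (a) and (b) guarantee that every mean in the four-term expansion exists (so the limsup is a limit and term-by-term passage is legitimate), and the orthogonality $\leftbrac \chi , \psi \rightbrac_{\cA}=\delta_{\chi,\psi}\,\chi$ from the remark after Lemma~\ref{bslemma} handles $M_{\cA}(|P_N|^2)$. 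The countability of the spectrum from (d), the independence of the enumeration (all summands nonnegative), and the conclusion that $f$ lies in the $\|\cdot\|_{b,2,\cA}$-closure of the trigonometric polynomials, hence in $Bap_{\cA}^2(G)$, all check out. The one place you still defer to \cite{LSS} is the Parseval identity $\|f\|_{b,2,\cA}^2=\sum_{\chi}|a_{\chi}^{\cA}(f)|^2$ needed for (i)$\Rightarrow$(ii)(d); since that is essentially the content of \cite[Cor.~3.21]{LSS} itself, this is consistent with how the paper treats the statement. Your route buys a proof of the harder direction (ii)$\Rightarrow$(i) that is independent of the external reference; the paper's route buys brevity.
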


Combining  results in this section we get:

\begin{coro}
The mapping
\[
\leftbrac \cdot , \cdot  \rightbrac_{\cA} : (Bap_{\cA}^2(G) \cap
L^\infty(G)) \times (Bap_{\cA}^2(G) \cap L^\infty(G)) \to SAP(G)
\subseteq (Bap_{\cA}^2(G) \cap L^\infty(G))
\]
is well defined and satisfies the properties from Proposition~\ref{prop:ip}. \qed
\end{coro}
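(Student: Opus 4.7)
The plan is to deduce the corollary almost directly from the theorem immediately preceding it together with Proposition~\ref{prop:ip}. First I would verify well-definedness: given $f,g\in Bap_{\cA}^2(G)\cap L^\infty(G)$, the theorem right above this corollary asserts that $\leftbrac f,g\rightbrac_{\cA}$ exists and lies in $SAP(G)$. This gives that the map takes values in $SAP(G)$.

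Second, I would justify the inclusion $SAP(G)\subseteq Bap_{\cA}^2(G)\cap L^\infty(G)$. Every strongly almost periodic function is uniformly continuous and bounded, hence in $L^\infty(G)$; moreover, such a function can be uniformly approximated by trigonometric polynomials. Since uniform convergence on $G$ implies $\|\cdot\|_{b,2,\cA}$-convergence (because $\|h\|_{b,2,\cA}\leqslant \|h\|_\infty$), and trigonometric polynomials lie in $Bap_{\cA}^2(G)$, it follows that $SAP(G)\subseteq Bap_{\cA}^2(G)$. Combining, $SAP(G)\subseteq Bap_{\cA}^2(G)\cap L^\infty(G)$, so the codomain inclusion displayed in the statement is valid.

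Third, I would check that the properties from Proposition~\ref{prop:ip} apply in this restricted setting. Proposition~\ref{prop:ip} states those properties under the assumption that the twisted Eberlein convolutions appearing in them exist. Here the space $Bap_{\cA}^2(G)\cap L^\infty(G)$ is a translation invariant linear subspace of $L^\infty(G)$: closedness under translation is immediate, and closedness under linear combinations is clear from the definition of $Bap_{\cA}^2$ via approximation by trigonometric polynomials. Consequently, for any $f,g,h\in Bap_{\cA}^2(G)\cap L^\infty(G)$ and scalars $a,b\in\CC$, every expression appearing in Proposition~\ref{prop:ip} (translates $\tau_t f$, $\tau_t g$, combinations $af+bh$, $ag+bh$, the pair $(f,f)$) lies again in $Bap_{\cA}^2(G)\cap L^\infty(G)$, so the preceding theorem guarantees the existence of the required twisted Eberlein convolutions. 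Thus Proposition~\ref{prop:ip} may be invoked, delivering translation invariance, conjugate symmetry, bilinearity/sesqui-linearity, positive definiteness, the Cauchy--Schwarz bound $\|\leftbrac f,g\rightbrac_{\cA}\|_\infty\leqslant \|f\|_{b,2,\cA}\|g\|_{b,2,\cA}$, and the identity $\leftbrac f,f\rightbrac_{\cA}(0)=\|f\|_{b,2,\cA}^2$.

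I do not expect a genuine obstacle: the corollary is essentially a packaging of earlier results. The only point requiring a brief verification is the inclusion $SAP(G)\subseteq Bap_{\cA}^2(G)$, which is immediate from density of trigonometric polynomials in $SAP(G)$ with respect to $\|\cdot\|_\infty$ and the domination $\|\cdot\|_{b,2,\cA}\leqslant \|\cdot\|_\infty$; every other ingredient has already been established.
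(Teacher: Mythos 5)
Your proposal is correct and takes essentially the same route as the paper, which states this corollary without a written proof as a direct combination of the preceding theorem (existence and membership in $SAP(G)$ for pairs in $Bap_{\cA}^2(G)\cap L^\infty(G)$) with Proposition~\ref{prop:ip}. Your explicit verification of the inclusion $SAP(G)\subseteq Bap_{\cA}^2(G)\cap L^\infty(G)$ via uniform approximation by trigonometric polynomials and the bound $\|\cdot\|_{b,2,\cA}\leqslant\|\cdot\|_\infty$ is exactly the intended (unwritten) argument; the only point stated a bit too casually is translation invariance of $Bap_{\cA}^2(G)$, which is not automatic since $\|\cdot\|_{b,2,\cA}$ is not translation invariant, but holds here for bounded functions by the results of \cite{LSS} that the paper itself invokes.
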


In fact, there is a converse to this corollary as follows.

\begin{theorem}
Let $\langle \cdot,\cdot \rangle : \bap_{\cA}^2(G) \cap L^\infty(G) \times
\bap_{\cA}^2(G) \cap L^\infty(G) \to \bap_{\cA}^2(G)\cap L^\infty(G)$ be any
function satisfying the statements of Proposition~\ref{prop:ip}. Then, $\langle
\cdot, \cdot \rangle$ is the twisted Eberlein convolution.
\end{theorem}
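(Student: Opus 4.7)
The plan is to show that $\langle \cdot, \cdot \rangle$ coincides with $\leftbrac \cdot, \cdot \rightbrac_\cA$ first on characters, then by (sesqui)linearity on trigonometric polynomials, and finally extend to all of $\bap_\cA^2(G) \cap L^\infty(G)$ via the Cauchy--Schwarz estimate of Proposition~\ref{prop:ip} together with density of trigonometric polynomials in $\bap_\cA^2(G)$.

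First I would pin down the value on a single character $\chi \in \widehat{G}$. Using the translation identity $\langle f, g \rangle = \tau_t \langle f, \tau_t g \rangle$ with $f = g = \chi$, together with $\tau_t \chi = \overline{\chi(t)}\, \chi$ and conjugate linearity in the second slot, I obtain $\tau_{-t} \langle \chi, \chi \rangle = \chi(t)\, \langle \chi, \chi \rangle$, that is, $\langle \chi, \chi \rangle(t) = \chi(t)\, \langle \chi, \chi \rangle(0)$. The normalization bullet of Proposition~\ref{prop:ip} gives $\langle \chi, \chi \rangle(0) = \|\chi\|_{b,2,\cA}^2 = 1$, so $\langle \chi, \chi \rangle = \chi$. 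For distinct characters $\chi \neq \psi$, the other translation identity $\langle \tau_t \chi, \tau_t \psi \rangle = \langle \chi, \psi \rangle$, combined with linearity and conjugate linearity, forces $\overline{\chi(t)}\psi(t)\, \langle \chi, \psi \rangle = \langle \chi, \psi \rangle$; choosing $t$ with $\overline{\chi(t)}\psi(t) \neq 1$ yields $\langle \chi, \psi \rangle = 0$. By (sesqui)linearity this extends to $\langle P, Q \rangle = \leftbrac P, Q \rightbrac_\cA$ on all trigonometric polynomials, since by the remark after Lemma~\ref{bslemma} the twisted Eberlein convolution takes exactly the same values on characters.

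Next, for arbitrary $f, g \in \bap_\cA^2(G) \cap L^\infty(G)$, I would choose trigonometric polynomial sequences $P_n \to f$ and $Q_n \to g$ in $\|\cdot\|_{b,2,\cA}$, as in the proof of Proposition~\ref{prop:tebe-bap-exists} (appealing to \cite[Lem.~3.5]{LSS}). Bilinearity and the Cauchy--Schwarz type inequality give
\[
\| \langle f, g \rangle - \langle P_n, Q_n \rangle \|_\infty \leqslant \|f - P_n\|_{b,2,\cA}\, \|g\|_{b,2,\cA} + \|P_n\|_{b,2,\cA}\, \|g - Q_n\|_{b,2,\cA},
\]
which tends to $0$ because $\|P_n\|_{b,2,\cA} \to \|f\|_{b,2,\cA}$ is bounded. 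The identical estimate holds with $\leftbrac \cdot, \cdot \rightbrac_\cA$ in place of $\langle \cdot, \cdot \rangle$ by Proposition~\ref{prop:ip} applied to the twisted Eberlein convolution itself. Since $\langle P_n, Q_n \rangle = \leftbrac P_n, Q_n \rightbrac_\cA$ for every $n$, both sides converge uniformly to the corresponding limits, and the equality passes through to give $\langle f, g \rangle = \leftbrac f, g \rightbrac_\cA$.

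I expect the only step demanding genuine thought to be the identification of $\langle \chi, \chi \rangle$ with $\chi$: one must combine the translation identity, conjugate linearity, and the normalization at the origin in exactly the right order. Once the values on characters are locked in, the remaining argument is routine, powered by the structural feature that the Cauchy--Schwarz bullet of Proposition~\ref{prop:ip} converts convergence in the Besicovitch seminorm (in which trigonometric polynomials are dense in $\bap_\cA^2(G)$) into uniform convergence of the output.
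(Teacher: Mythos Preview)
Your proposal is correct and follows essentially the same route as the paper: determine $\langle \chi,\psi\rangle$ on characters via the translation identities and the normalization $\langle \chi,\chi\rangle(0)=\|\chi\|_{b,2,\cA}^2=1$, extend by sesquilinearity to trigonometric polynomials, and then pass to all of $\bap_\cA^2(G)\cap L^\infty(G)$ using the Cauchy--Schwarz type bound together with density of trigonometric polynomials. Your write-up is in fact slightly more explicit than the paper's in the density step, spelling out the telescoping estimate rather than merely invoking continuity.
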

\begin{proof}

Note first that for $\chi \neq \psi \in \widehat{G}$ we have
\[
\langle \chi, \psi \rangle = \langle \tau_t \chi, \tau_t \psi \rangle
= (\chi-\psi)(t) \langle \chi, \psi \rangle
\]
showing that
\[
\langle \chi, \psi \rangle = 0 \qquad \text{ for all } \chi \neq \psi \in \widehat{G} \,.
\]
Next, for all $\chi \in \widehat{G}$ and $t \in G$ we have
\[
\chi(t) \langle \chi,  \chi \rangle = \langle \chi,  \tau_{-t} \chi \rangle = \tau_t \langle \chi, \chi \rangle
\]
showing that
\[
\langle \chi,  \chi \rangle(t)  =\chi(t)  \langle \chi,  \chi \rangle (0)= \chi(t) \|\chi \|_{b,2,\cA}^2 = \chi(t) \,.
\]
Therefore, for $\chi, \psi \in \widehat{G}$ we have
\[
\langle \chi, \psi \rangle =\delta_{\chi, \psi}\chi = \leftbrac \chi , \psi  \rightbrac_{\cA} \,.
\]

This shows that $\langle \cdot,\cdot \rangle $ agrees with the twisted
Eberlein convolution on $T \times T$ where $T$ is the set of
trigonometric polynomial. The Cauchy--Schwarz type inequality for
both $ \langle \cdot,\cdot \rangle $ and the twisted Eberlein convolution
gives us the continuity of these two functions from Besicovitch to
supremum norm topology. Therefore, by continuity and density, they agree.
\end{proof}

\section{The (twisted) Eberlein convolution for measures}
As mentioned in the introduction, quasicrystals are modeled using measures, not functions.
The notions of (twisted) Eberlein convolution carry naturally to measures.

\begin{definition}
We say that $\mu, \nu$ have a well defined \textit{Eberlein convolution} with respect to the van Hove sequence $\cA$ if
\[
\mu \ebeA \nu := \lim_{n\to\infty} \frac{1}{|A_n|}  \left( \mu|_{A_n} \right) * \left( \nu|_{-A_n} \right)
\]
exists in the vague topology. In this case, we call $\mu \ebeA \nu$ the \textit{Eberlein convolution} of $\mu$ and $\nu$.

We say that $\mu, \nu$ have a well defined \textit{twisted Eberlein convolution} with respect to the van Hove sequence $\cA$ if
\[
\leftbrac \mu , \nu \rightbrac_{\cA} := \mu \ebeA \widetilde{\nu}
\]
exists. In this case, we call $ \leftbrac  \mu, \nu \rightbrac_{\cA}
$ the \textit{twisted Eberlein convolution} of $\mu$ and $\nu$.

\end{definition}

Let us note here in passing that
\[
\leftbrac \mu , \nu \rightbrac_{\cA} =  \lim_{n\to\infty} \frac{1}{|A_n|}  \left( \mu|_{A_n} \right) * \widetilde{\left( \nu|_{A_n} \right)} \,,
\]
which makes the twisted version of the Eberlein convolution more natural, as we restrict both measures to $A_n$.

\smallskip

\noindent\fbox{%
    \parbox{15.3cm}{%
For the rest of the paper,  we assume that the (twisted) Eberlein
convolution exists, whenever we write $\mu \ebeA \nu$ or $ \leftbrac
\mu , \nu \rightbrac_{\cA} $, respectively.
        }%
}

\medskip

The following result shows the compatibility of the (twisted) Eberlein convolution for measures and functions.

\begin{prop}\label{prop1} \cite{LSS} Let $f,g \in \Cu(G), \mu,\nu \in \cM^\infty(G)$ and let $\cA$ be a van Hove sequence.
\begin{itemize}
\item[(a)] $f \ebeA g$ exists if and only if $(f \theta_G) \ebeA (g \theta_G)$ exists. Moreover, in this case
\[
(f \theta_G) \ebeA (g \theta_G)=(f \ebeA g) \theta_G \ts.
\]
\item[(b)] $\leftbrac f , g \rightbrac_{\cA}$ exists if and only if $ \leftbrac f \theta_G , g \theta_G \rightbrac_{\cA} $ exists. Moreover, in this case
\[
 \leftbrac f \theta_G ,  g \theta_G\rightbrac_{\cA} =\leftbrac f , g \rightbrac_{\cA} \theta_G \ts.
\]
\item[(c)] $ \leftbrac  \mu, \nu\rightbrac_{\cA}$ exists if and only if for all $\varphi, \psi \in \Cc(G)$ the limit
\[
M_{\cA}((\mu*\varphi)\cdot( \overline{ \nu*\psi }))=\lim_{n\to\infty} \frac{1}{|A_n|} \int_{A_n} (\mu*\varphi)(t)\ts \overline{ (\nu*\psi)(t)}\ \dd t
\]
exists. Moreover, in this case
\[
M_{\cA}((\mu*\varphi)\cdot( \overline{ \nu*\psi }))=\big(( \leftbrac
\mu ,  \nu\rightbrac_{\cA} )*(\varphi*\widetilde{\psi})\big)(0) \ts.
\]
\item[(d)] $\mu \ebeA \nu$ exists if and only if for all $\varphi, \psi \in \Cc(G)$ the limit
\[
M_{\cA}((\mu*\varphi)\cdot( \nu*\psi)^\dagger)=\lim_{n\to\infty} \frac{1}{|A_n|} \int_{A_n} (\mu*\varphi)(t)\ts ( \nu*\psi)(-t)\ \dd t
\]
exists. Moreover, in this case
\[
M_{\cA}((\mu*\varphi)\cdot( \nu*\psi)^\dagger)=\big((\mu \ebeA \nu)*(\varphi*\psi)\big)(0) \ts.
\]
\end{itemize}
\end{prop}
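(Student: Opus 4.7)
The plan is to prove the four assertions in parallel, since they all rest on the same identification between measure convolutions of cut-offs and function convolutions, with errors controlled by the van Hove property. The core observation is that for precompact Borel $A,B\subseteq G$ and functions $f,g$, the measure convolution $(f\theta_G)|_{A}*(g\theta_G)|_{B}$ is absolutely continuous with density
\[
t\mapsto \int_{A\cap (t-B)} f(s)\ts g(t-s)\, \dd s.
\]

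For (a), I would apply this observation with $A=A_n$ and $B=-A_n$. Comparing the resulting density divided by $|A_n|$ with $g_n(t):=\frac{1}{|A_n|}\int_{A_n}f(s)g(t-s)\,\dd s$ yields a pointwise discrepancy bounded by $\|f\|_\infty\|g\|_\infty\,|A_n\ts\triangle\ts (t+A_n)|/|A_n|$, which tends to $0$ locally uniformly in $t$ by the F{\o}lner property implicit in any van Hove sequence. Together with the uniform bound $\|f\|_\infty\|g\|_\infty$, dominated convergence then gives: the pointwise (equivalently, locally uniform) limit of $g_n$ exists iff the vague limit of the measures exists, and in that case the two limits agree as functions/measures. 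Part (b) is immediate from (a) applied to the pair $(f,\widetilde{g})$, together with $\widetilde{g\theta_G}=\widetilde{g}\ts\theta_G$.

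For (c) and (d), I would unfold the convolutions $\mu*\varphi$ and $\nu*\psi$ using Fubini. Fix $\varphi,\psi\in \Cc(G)$, set $K:=\supp\varphi\cup\supp\psi$, and write out
\[
\Bigl(\tfrac{1}{|A_n|}\,\mu|_{A_n}*\nu|_{-A_n}\Bigr)*(\varphi*\psi)\,(0)
= \tfrac{1}{|A_n|}\int_{A_n}(\mu*\varphi)(t)\,(\nu*\psi)(-t)\,\dd t + R_n,
\]
where $R_n$ collects the terms arising from the difference between $\mu|_{A_n}*\varphi$ and the full $\mu*\varphi$ (and likewise for $\nu$). Because $\mu,\nu$ are translation bounded and $\varphi,\psi$ are compactly supported, this difference is supported in a $K$-neighbourhood of $\partial A_n$, so
\[
|R_n|\leqslant \|\mu\|_K\,\|\nu\|_K\,\|\varphi\|_\infty\|\psi\|_\infty\,\frac{|\partial^{K'}\!A_n|}{|A_n|}\longrightarrow 0
\]
for a suitable compact $K'$, by the van Hove property. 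This gives (d), and (c) follows from the same identity with $\widetilde\psi$ in place of $\psi$ via the definition $\leftbrac\mu,\nu\rightbrac_{\cA}=\mu\ebeA\widetilde\nu$. To package this into the equivalence asserted in the proposition, one uses that the vague convergence of $\frac{1}{|A_n|}\mu|_{A_n}*\nu|_{-A_n}$ is characterised by convergence of its test against every $\eta\in \Cc(G)$, and that such $\eta$ can be generated (by translation) from convolutions $\varphi*\psi$; translation of the test function $\eta$ corresponds to testing the identity above at points other than $0$, which is handled identically.

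The main technical point, unified across all four parts, is the boundary estimate: the places where the cut-off convolutions $\mu|_{A_n}*\varphi$ and $(f\theta_G)|_{A_n}*(g\theta_G)|_{-A_n}$ differ from their un-cut-off analogues are confined to a fixed-compact-width neighbourhood of $\partial A_n$, so the van Hove property forces these corrections to be $o(1)$ after division by $|A_n|$. Once this is established, the equivalences are symbolic rewritings via Fubini and the definition of the (twisted) Eberlein convolution.
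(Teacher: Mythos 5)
The paper does not actually prove (a) and (c): it cites them from \cite{LSS} and only observes that (b) and (d) follow formally (via $\leftbrac f,g\rightbrac_{\cA}=f\ebeA\widetilde g$ and the corresponding identity for measures). Your proposal supplies a direct proof of the kind that reference contains, and its skeleton --- identify the density of $(f\theta_G)|_{A_n}*(g\theta_G)|_{-A_n}$, unfold $\mu*\varphi$ and $\nu*\psi$ by Fubini, and push all discrepancies into $K$-boundaries of $A_n$ that the van Hove property kills --- is the right one. The boundary estimates and the ``moreover'' identities are fine.

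There is, however, one genuine gap. In (a) you claim that, given the uniform bound $\|f\|_\infty\|g\|_\infty$, dominated convergence yields that the pointwise limit of $g_n(t)=\frac{1}{|A_n|}\int_{A_n}f(s)g(t-s)\,\dd s$ exists \emph{iff} the vague limit of $g_n\theta_G$ exists. Dominated convergence gives only the forward implication; vague convergence of a uniformly bounded sequence of densities does not imply their pointwise convergence (think of increasingly oscillatory densities converging vaguely to $0$). The implication you need for the ``if'' direction of (a) is exactly the nontrivial one, since the measure-side limit is the vague one. It is rescued by the hypothesis $f,g\in\Cu(G)$, which you never use: it makes $(g_n)_n$ uniformly bounded \emph{and equicontinuous}, and for such a family vague convergence of $g_n\theta_G$ does force locally uniform (hence pointwise) convergence of $g_n$ (e.g.\ via an approximate identity: $g_n(t)$ is uniformly close to $(g_n*\phi_\eps)(t)=\int g_n\,\phi_\eps(t-\cdot)\,\dd\theta_G$, and the latter converges by vague convergence). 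A secondary, more minor soft spot is in the ``if'' direction of (c)/(d): translates of $\varphi*\psi$ are again of the form $\varphi'*\psi$, so translation generates nothing new; what you actually need is that the span of $\{\varphi*\psi\}$ is dense in $\Cc(G)$ together with the uniform translation boundedness of the approximants $\frac{1}{|A_n|}\mu|_{A_n}*\nu|_{-A_n}$ (as in Lemma~\ref{lem:tebe exists subseq}) to upgrade convergence on that dense set to vague convergence against every $\eta\in\Cc(G)$.
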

\begin{proof}
(a) is Proposition~1.5 in \cite{LSS}, (b) follows from (a), (c) is Proposition~1.4 in \cite{LSS}, and (d) follows from (c).
\end{proof}

As an immediate consequence we get:

\begin{coro}\label{lem87}
Let $\mu, \nu \in \cM^\infty(G)$ be such that $ \leftbrac  \mu, \nu
\rightbrac_{\cA}$ is well defined. Then, the twisted Eberlein
convolution $\leftbrac  \mu*\varphi, \nu*\psi \rightbrac_{\cA} $
exists for all $\varphi, \psi \in \Cc(G)$ and
\[
 \leftbrac  \mu*\varphi,  \nu*\psi\rightbrac_{\cA} =  \leftbrac \mu ,\nu \rightbrac_{\cA}  * \varphi *\widetilde{\psi} \ts.   \tag*{\qed}
\]
\end{coro}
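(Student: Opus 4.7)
The plan is to reduce the claim to Proposition~\ref{prop1}(c) by interpreting the twisted Eberlein convolution of the functions $f := \mu*\varphi$ and $g := \nu*\psi$ pointwise. Fix $t \in G$ and write out the definition
\[
\leftbrac f , g \rightbrac_{\cA}(t) = M_{\cA}\bigl( f \cdot \overline{\tau_t g} \bigr),
\]
provided the mean exists. The first task is to rewrite $\tau_t g$ in a form where Proposition~\ref{prop1}(c) applies directly. A one-line computation from the definition $(\nu*\psi)(x) = \int \psi(x-y)\,\dd\nu(y)$ gives the key identity $\tau_t(\nu*\psi) = \nu * (\tau_t \psi)$. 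Substituting this in, the expression becomes $M_{\cA}\bigl((\mu*\varphi)\cdot\overline{\nu*(\tau_t\psi)}\bigr)$.

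Now apply Proposition~\ref{prop1}(c) with the test functions $\varphi$ and $\tau_t \psi$: since $\leftbrac \mu, \nu \rightbrac_{\cA}$ is assumed to exist, the mean on the right-hand side exists and equals $\bigl(\leftbrac \mu, \nu \rightbrac_{\cA} * (\varphi * \widetilde{\tau_t \psi})\bigr)(0)$. This proves existence of the twisted Eberlein convolution of the functions at every $t$; it only remains to identify this value with $(\leftbrac \mu, \nu \rightbrac_{\cA} * \varphi * \widetilde{\psi})(t)$.

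That identification is pure bookkeeping with the reflection and translation. First, directly from the definitions of $\widetilde{(\cdot)}$ and $\tau_t$, we have $\widetilde{\tau_t \psi} = \tau_{-t} \widetilde{\psi}$. Because convolution of a compactly supported function with a translation intertwines with the translation, $\varphi * \tau_{-t} \widetilde{\psi} = \tau_{-t}(\varphi * \widetilde{\psi})$. Finally, for any measure $\rho$ and any $h \in \Cc(G)$, the relation $(\rho * \tau_{-t} h)(0) = (\rho * h)(t)$ is immediate from $(\rho*h)(s) = \int h(s-y)\,\dd\rho(y)$. Chaining these three identities with the output of Proposition~\ref{prop1}(c) yields
\[
\leftbrac \mu*\varphi, \nu*\psi \rightbrac_{\cA}(t) = \bigl(\leftbrac \mu, \nu \rightbrac_{\cA} * \varphi * \widetilde{\psi}\bigr)(t),
\]
for every $t \in G$, which is the stated formula.

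There is no genuine obstacle here; the only care required is in tracking tildes and translations, in particular verifying $\tau_t(\nu*\psi) = \nu*(\tau_t\psi)$ and $\widetilde{\tau_t\psi} = \tau_{-t}\widetilde{\psi}$ from the definitions given at the start of Section~2. Once these are in place, the result is obtained by invoking Proposition~\ref{prop1}(c) pointwise in $t$.
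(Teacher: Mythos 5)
Your proof is correct and follows exactly the route the paper intends: the paper presents this corollary as an immediate consequence of Proposition~\ref{prop1}(c), and your argument is precisely the unwinding of that implication, applying (c) with the test functions $\varphi$ and $\tau_t\psi$ and then tracking the identities $\widetilde{\tau_t\psi}=\tau_{-t}\widetilde{\psi}$ and $(\rho*\tau_{-t}h)(0)=(\rho*h)(t)$. All the bookkeeping checks out against the conventions of Section~2.
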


 Similarly to functions, the twisted Eberlein convolution of measures satisfies a conjugate symmetry relation.
\begin{lemma}\label{lem tebe switch}
Let $\mu, \nu \in \cM^\infty(G)$ be such that $ \leftbrac \mu , \nu
\rightbrac_{\cA} $ is well defined. Then, $ \leftbrac \nu , \mu
\rightbrac_{\cA}$ is well defined and
\[
 \leftbrac  \nu,  \mu \rightbrac_{\cA} = \widetilde{ \leftbrac \mu, \nu \rightbrac_{\cA} } \ts.
\]
\end{lemma}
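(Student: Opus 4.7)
The plan is to lift the already established function-level conjugate symmetry \eqref{eq tebe twist com} to measures, using Proposition~\ref{prop1}(c) as the bridge between the function-valued and measure-valued settings.

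Fix arbitrary $\varphi, \psi \in \Cc(G)$. Since $\leftbrac \mu, \nu \rightbrac_{\cA}$ exists by hypothesis, Corollary~\ref{lem87} gives that the function-level twisted Eberlein convolution $\leftbrac \mu*\varphi, \nu*\psi \rightbrac_{\cA}$ exists and equals $\leftbrac \mu, \nu \rightbrac_{\cA} * \varphi * \widetilde{\psi}$. As $\mu*\varphi, \nu*\psi \in \Cu(G) \subseteq L^\infty(G)$, the function-level conjugate symmetry \eqref{eq tebe twist com} applies, yielding
\[
\leftbrac \nu*\psi, \mu*\varphi \rightbrac_{\cA} = \widetilde{\leftbrac \mu*\varphi, \nu*\psi \rightbrac_{\cA}} = \widetilde{\leftbrac \mu, \nu \rightbrac_{\cA}} * \psi * \widetilde{\varphi},
\]
where the last equality uses the routine identity $\widetilde{f*g} = \widetilde{f}*\widetilde{g}$.

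Evaluating at $0$, I conclude that $M_{\cA}\bigl((\nu*\psi)\,\overline{(\mu*\varphi)}\bigr)$ exists for every pair $\varphi,\psi \in \Cc(G)$. The relevant direction of Proposition~\ref{prop1}(c) then produces the measure $\leftbrac \nu, \mu \rightbrac_{\cA}$ and supplies the identity
\[
\bigl(\leftbrac \nu, \mu \rightbrac_{\cA} * \psi * \widetilde{\varphi}\bigr)(0) = \bigl(\widetilde{\leftbrac \mu, \nu \rightbrac_{\cA}} * \psi * \widetilde{\varphi}\bigr)(0)
\]
for all $\varphi, \psi \in \Cc(G)$.

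The closing step is to pass from this pointwise identity to equality of measures. Since the family $\{\psi * \widetilde{\varphi} : \varphi, \psi \in \Cc(G)\}$ is total in $\Cc(G)$ (it contains, via a standard approximate-identity argument, functions approximating any element of $\Cc(G)$), and since the evaluations $h \mapsto (\sigma * h)(0) = \sigma(h^{\dagger})$ separate Radon measures when $h$ ranges over such a total family, the two measures must coincide. I expect no serious obstacle; the only mildly technical point is this concluding density argument, which is standard within the product-topology framework of the paper.
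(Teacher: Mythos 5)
Your argument is correct, but it takes a genuinely different --- and considerably longer --- route than the paper. The paper's proof is a one-line algebraic identity at the level of the approximants: since $\leftbrac \mu , \nu \rightbrac_{\cA}$ is by definition the vague limit of $\frac{1}{|A_n|}\ts \mu|_{A_n}*\widetilde{\nu|_{A_n}}$, one simply observes that
\[
\widetilde{\mu|_{A_n}*\widetilde{\nu|_{A_n}}}
=\widetilde{\mu|_{A_n}}*\widetilde{\widetilde{\nu|_{A_n}}}
=\nu|_{A_n}*\widetilde{\mu|_{A_n}}\ts,
\]
so the $n$-th approximant of $\leftbrac \nu , \mu \rightbrac_{\cA}$ is exactly the reflection of the $n$-th approximant of $\leftbrac \mu , \nu \rightbrac_{\cA}$; vague continuity of $\sigma\mapsto\widetilde{\sigma}$ then yields existence and the identity in one stroke. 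You instead descend to the function level via Corollary~\ref{lem87}, apply the conjugate symmetry \eqref{eq tebe twist com} to the smoothed functions $\mu*\varphi,\ \nu*\psi\in\Cu(G)$, climb back up through Proposition~\ref{prop1}(c), and close with a separation argument over the family $\{\psi*\widetilde{\varphi}\}$. Each of these steps is sound, including the final approximate-identity argument, which is standard for Radon measures, so your proof is valid. What you lose is brevity and self-containedness: you lean on Proposition~\ref{prop1}(c) and Corollary~\ref{lem87}, whereas the paper's computation needs only the elementary identities $\widetilde{\alpha*\beta}=\widetilde{\alpha}*\widetilde{\beta}$ and $\widetilde{\widetilde{\nu}}=\nu$ for compactly supported measures. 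What you gain is an explicit verification that the measure-level conjugate symmetry is consistent with the function-level one under smoothing, which is a worthwhile sanity check even if it is not the shortest path.
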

\begin{proof}
The claim follows immediately from the observation that
\[
  \widetilde{ \left( \mu|_{A_n} \right) * \widetilde{\left( \nu|_{A_n} \right)}} =  \widetilde{ \left( \mu|_{A_n} \right) }*\widetilde{ \widetilde{\left( \nu|_{A_n} \right)}}
  = \left( \nu|_{A_n} \right) *  \widetilde{ \left( \mu|_{A_n} \right) } \ts.  \qedhere
\]
\end{proof}

By switching between the Eberlein convolution and its twisted version, we get the following formula for the Eberlein convolution. Exactly as for functions, the Eberlein
convolution is commutative if the van Hove sequence is symmetric.

\begin{coro}
Let $\mu, \nu \in \cM^\infty(G)$ be such that $\mu \ebeA \nu$ is well defined. Then, $\nu \circledast_{-\cA} \mu$ is well defined and
\[
\nu \circledast_{-\cA} \mu = \mu \ebeA \nu \ts.  \tag*{\qed}
\]
\end{coro}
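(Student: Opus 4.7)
The plan is to prove this by directly unpacking the definition of the Eberlein convolution for both sides and using commutativity of convolution for finite measures. Since $\mu \ebeA \nu$ is assumed to exist, we know that the net
\[
\frac{1}{|A_n|} (\mu|_{A_n}) * (\nu|_{-A_n})
\]
converges vaguely to $\mu \ebeA \nu$. For the right-hand side, we apply the definition of the Eberlein convolution to the van Hove sequence $-\cA = (-A_n)_{n\in\NN}$ (which is indeed a van Hove sequence, since inversion is a measure-preserving homeomorphism of $G$), obtaining
\[
\nu \circledast_{-\cA} \mu = \lim_{n\to\infty} \frac{1}{|{-A_n}|} (\nu|_{-A_n}) * (\mu|_{-(-A_n)}) = \lim_{n\to\infty} \frac{1}{|A_n|} (\nu|_{-A_n}) * (\mu|_{A_n}),
\]
where I have used that the Haar measure on an LCA group satisfies $|{-A_n}| = |A_n|$.

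The key observation is now that the measures $\mu|_{A_n}$ and $\nu|_{-A_n}$ are finite (since $A_n$ is precompact and $\mu,\nu \in \cM^\infty(G)$), so their convolution is well defined and, because $G$ is abelian, commutative:
\[
(\mu|_{A_n}) * (\nu|_{-A_n}) = (\nu|_{-A_n}) * (\mu|_{A_n}).
\]
Substituting this into the expression for $\mu \ebeA \nu$ shows that the two defining nets coincide term by term. In particular, the limit for $\nu \circledast_{-\cA} \mu$ exists and equals the limit for $\mu \ebeA \nu$, which finishes the proof.

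There is no real obstacle here, as the statement is essentially a bookkeeping consequence of commutativity of convolution and invariance of the Haar measure under inversion. The only point requiring mild care is verifying that $-\cA$ is itself a van Hove sequence (so that the symbol $\nu \circledast_{-\cA} \mu$ is legitimate) and that $|{-A_n}| = |A_n|$, both of which follow from the fact that the map $x \mapsto -x$ is a topological group automorphism of the abelian group $G$ and hence preserves the Haar measure by its uniqueness up to a positive scalar.
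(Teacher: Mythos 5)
Your proof is correct and is essentially the argument the paper has in mind: the corollary carries a \qed{} with no written proof precisely because, after unwinding the definition of $\nu \circledast_{-\cA} \mu$ and using inversion-invariance of the Haar measure, the two defining sequences agree term by term via commutativity of the convolution of the finite measures $\mu|_{A_n}$ and $\nu|_{-A_n}$ --- the same observation that drives the proof of the preceding conjugate-symmetry lemma for measures. Your side checks that $-\cA$ is again a van Hove sequence and that $|-A_n|=|A_n|$ are exactly the points that need verifying, and both are handled correctly.
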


\medskip

Next, we give an alternate characterisation for the (twisted) Eberlein convolution. Note that this type of computations appeared implicitly in \cite{BL,LS,Martin2} just to name a few.

\begin{lemma}
Let $\mu , \nu \in \cM^\infty(G)$, and let $\cA$ be a van Hove sequence. Then, we have
\[
\lim_{n\to\infty} \frac{1}{|A_n|}\left( \mu- \mu|_{A_n} \right)* \widetilde{\left( \nu|_{A_n} \right)}=0
\]
in the vague topology.
\end{lemma}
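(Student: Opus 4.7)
The plan is to verify the claimed vague convergence by testing against an arbitrary $\varphi \in \Cc(G)$ with support contained in a compact set $K$. Unfolding the paired convolution via Fubini and taking absolute values yields
\[
\bigl|\bigl[(\mu - \mu|_{A_n}) \ast \widetilde{\nu|_{A_n}}\bigr](\varphi)\bigr| \leq \|\varphi\|_\infty \int_{G \setminus A_n} \bigl|\widetilde{\nu|_{A_n}}\bigr|(K - x) \, d|\mu|(x).
\]
Using the identity $|\widetilde{\rho}|(B) = |\rho|(-B)$, the inner factor equals $|\nu|\bigl((x-K) \cap A_n\bigr)$. This quantity is uniformly bounded by $\|\nu\|_{-K} < \infty$ by translation boundedness of $\nu$, and, crucially, it vanishes whenever $(x-K) \cap A_n = \emptyset$, that is, whenever $x \notin A_n + K$.

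Combining this vanishing with the restriction $x \notin A_n$ localizes the $x$-integration to $(A_n + K) \setminus A_n \subseteq \overline{A_n + K} \setminus A_n \subseteq \partial^K A_n$, and I would then insert the uniform bound on the inner factor to obtain the clean estimate
\[
\Bigl|\tfrac{1}{|A_n|}\bigl[(\mu - \mu|_{A_n}) \ast \widetilde{\nu|_{A_n}}\bigr](\varphi)\Bigr| \leq \|\varphi\|_\infty \cdot \|\nu\|_{-K} \cdot \tfrac{|\mu|(\partial^K A_n)}{|A_n|}.
\]

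The conclusion then reduces to the standard fact that, for a translation bounded measure $\mu$ and a van Hove sequence, $|\mu|(\partial^K A_n)/|A_n| \to 0$ for every compact $K$. If needed, this is obtained from the elementary Fubini bound $|\mu|(E) \leq \tfrac{\|\mu\|_V}{|V|}\,|E+V|$, valid for any compact symmetric neighbourhood $V$ of $0$, combined with the van Hove property applied to a suitably enlarged compact set (one checks that $|\partial^K A_n + V|/|A_n| \to 0$).

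The main obstacle is precisely this last transfer of van Hove decay from the Haar measure to the $|\mu|$-measure of the $K$-boundary, as it is the only place where both translation boundedness of $\mu$ and the defining property of a van Hove sequence enter in a non-trivial way. All remaining steps are routine bookkeeping with the supports of the restricted and reflected measures.
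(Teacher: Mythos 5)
Your proof is correct and follows essentially the same route as the paper: unfold the convolution against a test function, observe that the non-vanishing region forces the $\mu$-variable into $(A_n+K)\setminus A_n\subseteq\partial^K A_n$, bound the inner factor uniformly via translation boundedness of $\nu$, and conclude from $|\mu|(\partial^K A_n)/|A_n|\to 0$ (which the paper simply cites from Schlottmann's Lemma~1.1, whereas you also sketch its standard Fubini proof).
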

\begin{proof}
Let $\varphi \in \Cc(G)$, and let $K= \supp(\varphi)$. Then,
\begin{align*}
\Big| \frac{1}{|A_n|} ( \mu- \mu|_{A_n} )*\widetilde{( \nu|_{A_n} )} \Big|
    & = \frac{1}{|A_n|}  \left| \int_{G} \int_{G} \varphi(s+t)\ \dd \widetilde{( \nu|_{A_n})}(s) \  \dd
      ( \mu- \mu|_{A_n} )(t) \right|\\
    &= \frac{1}{|A_n|}  \left| \int_{G} \overline{\int_{G} \overline{\varphi(-s+t)}\ts 1_{A_n}(s)\ \dd \nu (s)}
        \ts 1_{G \backslash A_n}(t)\ \dd \mu(t) \right| \\
    &= \frac{1}{|A_n|}  \left| \int_{G} \overline{\int_{G} \overline{\varphi(-s+t)}\ts 1_{A_n}(s)\ts
       1_{G \backslash A_n}(t)\ \dd \nu (s)}\ \dd \mu(t) \right|   \ts.
\end{align*}

Next, $\varphi(-s+t) 1_{A_n}(s)1_{G \backslash A_n}(t)\neq 0$ implies that $t-s \in K, s \in A_n, t \notin A_n$. Therefore, $t \in (A_n+K) \backslash A_n \subseteq \partial^k(A_n)$.
Hence, one has
\begin{align*}
\Big| \frac{1}{|A_n|} ( \mu- \mu|_{A_n})*\widetilde{( \nu|_{A_n} )} \Big|
    & = \frac{1}{|A_n|}  \left| \int_{G} \overline{\int_{G} \overline{\varphi(-s+t)}\ts 1_{A_n}(s)
       \ts 1_{\partial^K(A_n)}(t)\ \dd \nu (s)}\ \dd \mu(t) \right| \\
    &\leqslant \frac{1}{|A_n|} \int_{G} \int_{G} |\overline{\varphi(-s+t)}|\ts 1_{A_n}(s)\ts
       1_{\partial^K(A_n)}(t)\  \dd |\nu| (s)\ \dd |\mu|(t)  \\
    &= \frac{1}{|A_n|} \int_{G} 1_{\partial^K(A_n)}(t)  \int_{G} |\varphi(-s+t)|\ts  1_{A_n}(s)\ \dd |\nu| (s)
      \ \dd |\mu|(t)  \\
    &\leqslant  \frac{1}{|A_n|} \int_{G} 1_{\partial^K(A_n)}(t)  \int_{G} |\varphi(-s+t)|\ \dd |\nu| (s)\
       \dd |\mu|(t)  \\
   &= \frac{1}{|A_n|}  \int_{G} 1_{\partial^K(A_n)}(t)\ts (|\varphi| *|\nu|) (t) \  \dd |\mu|(t)  \\
   &\leqslant \| |\varphi| *|\nu| \|_\infty \cdot \frac{|\mu|(\partial^K(A_n))}{|A_n|}  \ts.
\end{align*}
Now, since $\nu$ is translation bounded, so is $|\nu|$, and hence $ \| \left| \varphi\right|*\left|\mu \right| \|_\infty$, see \cite{ARMA1,MoSt}. Moreover, one has $\lim_{n\to\infty} \frac{|\mu |(\partial^K(A_n))}{|A_n|} =0$ by \cite[Lemma~ 1.1]{Martin2}, which completes the proof.
\end{proof}

This result has the following interesting  consequences.

\begin{coro}\label{coro:second def Ebe}
Let $\mu, \nu \in \cM^\infty(G)$, and let $\cA= (A_n)_{n \in\NN}$ be a van Hove sequence. If one of the limits below exist, then all three exist and they are equal, i.e.
\[
 \leftbrac  \mu,  \nu \rightbrac_{\cA}  =\lim_{n\to\infty} \frac{1}{|A_n|}   \big(\mu|_{A_n}  * \widetilde{\nu|_{A_n}}\big)
=\lim_{n\to\infty} \frac{1}{|A_n|}  \big(\mu  * \widetilde{\nu|_{A_n}} \big)
=\lim_{n\to\infty} \frac{1}{|A_n|}  \big( \mu|_{A_n}  * \widetilde{\nu} \big)  \ts.  \tag*{\qed}
\]
\end{coro}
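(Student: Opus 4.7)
The plan is to read the corollary as three separate equalities involving the limit $\leftbrac \mu, \nu \rightbrac_{\cA} = \lim_{n} \tfrac{1}{|A_n|} \mu|_{A_n} * \widetilde{\nu|_{A_n}}$ (which is essentially the definition, together with the interpretation noted just before the boxed convention) and the other two expressions. I would prove the two equivalences separately, exploiting the preceding lemma twice: once as stated, and once in a ``swapped'' form obtained by interchanging $\mu$ and $\nu$.

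For the equivalence of the first and second limits I would simply subtract: the difference
\[
\tfrac{1}{|A_n|}\,\mu * \widetilde{\nu|_{A_n}} \;-\; \tfrac{1}{|A_n|}\,\mu|_{A_n} * \widetilde{\nu|_{A_n}} \;=\; \tfrac{1}{|A_n|}\,(\mu-\mu|_{A_n}) * \widetilde{\nu|_{A_n}}
\]
tends to $0$ vaguely by the preceding lemma. So if either of the two vague limits exists, so does the other, and they agree.

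For the equivalence of the first and third limits I would apply the preceding lemma with the roles of $\mu$ and $\nu$ exchanged (both are translation bounded, so the hypothesis is symmetric), yielding
\[
\tfrac{1}{|A_n|}\,(\nu-\nu|_{A_n}) * \widetilde{\mu|_{A_n}} \xrightarrow{\,n\to\infty\,} 0
\]
vaguely. Applying $\widetilde{\cdot}$, and using that $\widetilde{\alpha * \beta}=\widetilde{\alpha}*\widetilde{\beta}$ for (translation bounded) measures together with the vague continuity of $\widetilde{\cdot}$, this transforms into
\[
\tfrac{1}{|A_n|}\,\mu|_{A_n} * \widetilde{(\nu-\nu|_{A_n})} \xrightarrow{\,n\to\infty\,} 0 \ts,
\]
which is exactly the difference between the first and the third limit. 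Hence these two limits exist and coincide whenever one of them does.

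The only subtlety is justifying that taking $\widetilde{\cdot}$ commutes with the vague limit and restriction, i.e.\ $\widetilde{\mu|_{A_n}} = \widetilde{\mu}|_{-A_n}$ and $\widetilde{\cdot}$ is continuous in the vague topology; both are straightforward from the definition of $\widetilde{\mu}$ as $\widetilde{\mu}(g)=\overline{\mu(\widetilde{g})}$ and from $\widetilde{g\,\ast\,\text{test}}$ algebra, so no real obstacle arises. Chaining the two implications gives that existence of any one of (A), (B), (C) forces the existence of the other two and equality of all three with $\leftbrac \mu, \nu \rightbrac_{\cA}$.
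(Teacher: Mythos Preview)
Your argument is correct and is exactly the approach the paper has in mind: the corollary is stated with a bare \qed\ because both differences $\tfrac{1}{|A_n|}(\mu-\mu|_{A_n})*\widetilde{\nu|_{A_n}}$ and $\tfrac{1}{|A_n|}\,\mu|_{A_n}*\widetilde{(\nu-\nu|_{A_n})}$ tend to $0$ vaguely by the preceding lemma (the second after swapping the roles of $\mu,\nu$ and applying $\widetilde{\;\cdot\;}$, just as you do). The minor checks you flag---vague continuity of $\widetilde{\;\cdot\;}$ and $\widetilde{\alpha*\beta}=\widetilde{\alpha}*\widetilde{\beta}$---are routine, so nothing is missing.
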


Similarly, we get the following relations for the Eberlein convolution.

\begin{coro}
Let $\mu , \nu \in \cM^\infty(G)$, and let $\cA$ be a van Hove sequence. Then,  we have
\[
\lim_{n\to\infty} \frac{1}{|A_n|}\left( \mu- \mu|_{A_n} \right)* \left( \nu|_{-A_n} \right)=0
\]
in the vague topology.  \qed
\end{coro}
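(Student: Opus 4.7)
The plan is to reduce the corollary directly to the preceding lemma by a substitution trick. The key observation is that $\nu|_{-A_n}$ can be written in the form $\widetilde{\rho|_{A_n}}$ for an appropriate auxiliary measure $\rho$ built from $\nu$, so that the preceding lemma applied to $(\mu,\rho)$ immediately yields the claim.

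First, I would establish the bookkeeping identity
\[
\widetilde{\rho\ts|_{A_n}} \;=\; \overline{\rho}^{\,\dagger}\big|_{-A_n}
\]
valid for any measure $\rho$ on $G$. This follows by combining the standard dualities $\widetilde{\rho}=\overline{\rho^{\dagger}}$ (dual to the pointwise identity $\widetilde{g}=\overline{g^{\dagger}}$) and $(\rho|_{A_n})^{\dagger}=\rho^{\dagger}|_{-A_n}$, both of which are immediate from the definitions of $\widetilde{\cdot}$ and $\cdot^{\dagger}$ on measures.

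Second, I would apply this identity with $\rho=\overline{\nu}^{\,\dagger}$. Since complex conjugation and the reflection $\cdot^{\dagger}$ both preserve the norm $\|\cdot\|_K$, the measure $\overline{\nu}^{\,\dagger}$ is translation bounded whenever $\nu$ is. A brief simplification, using $\overline{\overline{\nu}^{\,\dagger}}=\nu^{\dagger}$ and hence $(\overline{\overline{\nu}^{\,\dagger}})^{\dagger}=\nu$, gives
\[
\widetilde{\ts\overline{\nu}^{\,\dagger}\big|_{A_n}\ts} \;=\; \nu\big|_{-A_n} \ts.
\]
Now applying the preceding lemma to the pair $(\mu,\overline{\nu}^{\,\dagger})$ yields
\[
\lim_{n\to\infty} \frac{1}{|A_n|}\bigl(\mu - \mu|_{A_n}\bigr) * \widetilde{\ts\overline{\nu}^{\,\dagger}|_{A_n}\ts} \;=\; 0
\]
in the vague topology, and substituting the identity above converts this limit into the desired statement.

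I do not expect any substantive obstacle; the argument reduces to routine bookkeeping about the operations $\widetilde{\cdot}$, $\cdot^{\dagger}$, and complex conjugation, together with the observation that these operations preserve translation boundedness. As an alternative route, one could simply repeat the proof of the preceding lemma verbatim with $\nu|_{-A_n}$ in place of $\widetilde{\nu|_{A_n}}$: the support analysis (the integrand forces $s\in -A_n$ and $t\in (K+A_n)\setminus A_n\subseteq \partial^K A_n$) is unchanged, and the final estimate of the form $\||\varphi|*|\nu|^{\dagger}\|_\infty\cdot |\mu|(\partial^K A_n)/|A_n|\to 0$ follows from translation boundedness of $\nu$ and the van Hove property of $\cA$.
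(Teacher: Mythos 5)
Your argument is correct and matches what the paper intends by leaving this corollary as an unproved ``similarly'' consequence of the preceding lemma: both the substitution reduction and the verbatim-repetition route you sketch are valid, and the bookkeeping identities you use ($\widetilde{\rho}=\overline{\rho^{\dagger}}$, $(\rho|_{A_n})^{\dagger}=\rho^{\dagger}|_{-A_n}$, preservation of translation boundedness) all check out. The only simplification worth noting is that your auxiliary measure $\overline{\nu}^{\,\dagger}$ is nothing but $\widetilde{\nu}$, so the reduction amounts to applying the lemma to the pair $(\mu,\widetilde{\nu})$ together with the identity $\widetilde{\widetilde{\nu}|_{A_n}}=\nu|_{-A_n}$.
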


\begin{coro}
Let $\mu, \nu \in \cM^\infty(G)$, and let $\cA= (A_n)_{n \in\NN}$ be a van Hove sequence. If one of the limits below exist, then all three exist and they are equal, i.e.
\[
 \mu \ebeA \nu = \lim_{n\to\infty} \frac{1}{|A_n|}   \big(\mu|_{A_n}  * \nu|_{-A_n}\big)
=\lim_{n\to\infty} \frac{1}{|A_n|}  \big(\mu  * \nu|_{-A_n} \big)
=\lim_{n\to\infty} \frac{1}{|A_n|}  \big( \mu|_{A_n}  * \nu \big)  \ts.  \tag*{\qed}
\]
\end{coro}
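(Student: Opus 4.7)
The plan is to reduce the assertion to two vanishing statements in the vague topology. The first of these is already supplied by the preceding corollary, namely
\[
\lim_{n\to\infty} \frac{1}{|A_n|}(\mu - \mu|_{A_n}) * \nu|_{-A_n} = 0 \ts,
\]
which immediately gives the equality of the first two limits whenever one of them exists.

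To deal with the third limit, I would establish the companion vanishing
\[
\lim_{n\to\infty} \frac{1}{|A_n|}\, \mu|_{A_n} * (\nu - \nu|_{-A_n}) = 0 \ts.
\]
Rather than redo the boundary-type estimate, the cleanest route is to exploit commutativity of the convolution of measures and rewrite the left-hand side as $\frac{1}{|-A_n|}\,(\nu - \nu|_{-A_n}) * \mu|_{-(-A_n)}$. Since $-\cA := (-A_n)_{n\in\NN}$ is again a van Hove sequence with $|-A_n|=|A_n|$, the preceding corollary applied with the roles of $\mu$ and $\nu$ swapped and $\cA$ replaced by $-\cA$ delivers exactly this vanishing.

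Once both vanishing statements are available, the conclusion is automatic: any two of the three sequences appearing in the claim differ by a sequence that tends to $0$ vaguely, so the existence of any one limit forces the existence of the other two and the three limits coincide.

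The only (minor) obstacle is the verification that $-\cA$ is itself a van Hove sequence with $|-A_n|=|A_n|$. This rests on the invariance of Haar measure under inversion on an abelian group together with the identity $\partial^K(-A_n) = -\partial^{-K}(A_n)$; since $K$ ranges over all compact subsets of $G$, the van Hove condition for $\cA$ is equivalent to that for $-\cA$. Modulo this routine check, the entire argument is a straightforward application of the symmetric companion to the previous corollary.
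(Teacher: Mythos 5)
Your argument is correct. The paper itself leaves this corollary without proof, the intended route being the substitution $\nu \mapsto \widetilde{\nu}$ into the twisted version: since $\widetilde{\nu}|_{A_n} = \widetilde{\nu|_{-A_n}}$, the three-limit identity for $\leftbrac \mu , \widetilde{\nu} \rightbrac_{\cA}$ is verbatim the claimed identity for $\mu \ebeA \nu$, so all three statements (the vanishing lemma, the preceding corollary, and this one) transfer at once. Your route is slightly different: you take the preceding (untwisted) vanishing corollary as given, which settles the first two limits, and then manufacture the companion vanishing $\frac{1}{|A_n|}\, \mu|_{A_n} * (\nu - \nu|_{-A_n}) \to 0$ by commutativity of the convolution (legitimate here, since $\mu|_{A_n}$ is compactly supported and $\nu - \nu|_{-A_n}$ is translation bounded) together with the observation that $-\cA$ is again a van Hove sequence of the same volumes; your verification of that last point via inversion-invariance of Haar measure and $\partial^{K}(-A) = -\partial^{-K}(A)$ is accurate. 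What the paper's substitution buys is that no new facts about $-\cA$ are needed; what your argument buys is independence from the twisted machinery, deriving the statement directly from its own stated companion. Either way the conclusion follows, since two sequences of measures differing by a vaguely null sequence have the same convergence behaviour.
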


\medskip

Let us also briefly discuss the compatibility of the twisted Eberlein convolution with translations. Similarly to functions, we get the following formulas:

\begin{lemma}
Let $\mu,\nu\in \mc{M}^{\infty}(G)$, and let
$\mc{A}=(A_n)_{n\in\NN}$ be a van Hove sequence such that $
\leftbrac \mu , \nu \rightbrac_{\cA} $ exists. Then, the following
twisted Eberlein convolutions exist and
\[
\tau_t \leftbrac  \mu,  \nu \rightbrac_{\cA} = \leftbrac \tau_t\mu ,
\nu\rightbrac_{\cA}  = \leftbrac \mu , \tau_{-t} \nu
\rightbrac_{\cA}
\]
for all $t\in G$.
\end{lemma}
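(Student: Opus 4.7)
The plan is to exploit the alternative characterisations of the twisted Eberlein convolution in Corollary~\ref{coro:second def Ebe}, which allow us to leave one of the two factors unrestricted inside the limit. This is useful because restricting $\tau_t \mu$ to $A_n$ differs from translating $\mu|_{A_n}$ by $t$, and handling that difference directly would force us to invoke the van Hove property to kill a boundary term. Working instead with $\mu$ unrestricted and $\widetilde{\nu|_{A_n}}$ (or vice versa) makes both equalities essentially formal, so the corollary does the heavy lifting for us.

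For the first identity, I start from $\leftbrac \mu, \nu \rightbrac_{\cA} = \lim_n \tfrac{1}{|A_n|}\, \mu * \widetilde{\nu|_{A_n}}$ in the vague topology, which is the second limit in Corollary~\ref{coro:second def Ebe}. Since $\widetilde{\nu|_{A_n}}$ has compact support and $\mu$ is translation bounded, the convolution $\mu * \widetilde{\nu|_{A_n}}$ is a well-defined measure and commutes with translations in the first factor, so $(\tau_t \mu) * \widetilde{\nu|_{A_n}} = \tau_t\bigl(\mu * \widetilde{\nu|_{A_n}}\bigr)$. Using that $\tau_t$ is vaguely continuous, this gives
\[
\lim_n \frac{1}{|A_n|}\, (\tau_t \mu) * \widetilde{\nu|_{A_n}} = \tau_t\, \leftbrac \mu, \nu \rightbrac_{\cA},
\]
and Corollary~\ref{coro:second def Ebe} then implies that $\leftbrac \tau_t \mu, \nu \rightbrac_{\cA}$ exists and equals $\tau_t\, \leftbrac \mu, \nu \rightbrac_{\cA}$.

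For the second identity, I first verify the reflection--translation rule $\widetilde{\tau_{-t}\nu} = \tau_t\, \widetilde{\nu}$ by a short unwinding of the definitions $\widetilde{\sigma}(g) = \overline{\sigma(\widetilde{g})}$ and $(\tau_s \sigma)(g) = \sigma(\tau_{-s} g)$. Then, using the third limit in Corollary~\ref{coro:second def Ebe}, I compute
\[
\lim_n \frac{1}{|A_n|}\, \mu|_{A_n} * \widetilde{\tau_{-t}\nu} = \lim_n \frac{1}{|A_n|}\, \mu|_{A_n} * \tau_t \widetilde{\nu} = \tau_t \lim_n \frac{1}{|A_n|}\, \mu|_{A_n} * \widetilde{\nu} = \tau_t\, \leftbrac \mu, \nu \rightbrac_{\cA},
\]
so that $\leftbrac \mu, \tau_{-t}\nu \rightbrac_{\cA}$ exists and agrees with $\tau_t\, \leftbrac \mu, \nu \rightbrac_{\cA}$, again by Corollary~\ref{coro:second def Ebe}. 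The only mildly delicate point throughout is the compatibility of convolution-by-a-compactly-supported-measure with translations, but this is routine: by definition of the convolution paired against $\varphi \in \Cc(G)$, a translation in either factor simply pulls through as a translation of $\varphi$, and vague convergence is preserved.
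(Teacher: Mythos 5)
Your proof is correct and follows essentially the same route as the paper's: both pull the translation through the convolution term by term and invoke Corollary~\ref{coro:second def Ebe} to transfer existence of the limit to the translated pair. If anything, your use of the third limit form $\mu|_{A_n}*\widetilde{\nu}$ for the identity $\leftbrac \mu , \tau_{-t}\nu \rightbrac_{\cA}=\tau_t\leftbrac \mu , \nu \rightbrac_{\cA}$ is slightly cleaner than the paper's one-line computation, since it sidesteps the mismatch between the windows $A_n$ and $t+A_n$ that arises when one restricts $\tau_{-t}\nu$ to $A_n$.
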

\begin{proof}
A simple computation yields
\[
\tau_t \left( \frac{1}{|A_n|} (\mu* \widetilde{\nu|_{A_n}})\right) = \frac{1}{|A_n|} ((\tau_t\mu)* \widetilde{\nu|_{A_n}}) = \frac{1}{|A_n|} (\mu* \widetilde{(\tau_{-t}\nu)|_{A_n}})  \ts.
\]
The claim follows.
\end{proof}

\begin{coro}
Let $\mu,\nu\in \mc{M}^{\infty}(G)$, and let $\mc{A}=(A_n)_{n\in\NN}$ be a van Hove sequence such that $\mu\ebeA \nu$ exists. Then,  the following Eberlein convolutions exist and
\[
\tau_t(\mu\ebeA \nu) = (\tau_t\mu)\ebeA \nu = \mu\ebeA (\tau_{t} \nu)
\]
for all $t\in G$.  \qed
\end{coro}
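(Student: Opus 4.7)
The plan is to deduce this corollary for the ordinary Eberlein convolution directly from the twisted version in the preceding lemma, using the identity $\mu \ebeA \nu = \leftbrac \mu , \widetilde{\nu} \rightbrac_{\cA}$ (which follows from the definition of the twisted Eberlein convolution together with $\widetilde{\widetilde{\nu}} = \nu$). This reduces the problem to translating the twisted convolution in each of its arguments and then re-interpreting the result.

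First, I would record the auxiliary identity
\[
\widetilde{\tau_t \nu} = \tau_{-t}\ts \widetilde{\nu}
\]
for every $\nu \in \cM^\infty(G)$ and every $t \in G$. This is a two-line check: unwinding the definitions, $(\tau_s \widetilde{\nu})(g) = \widetilde{\nu}(\tau_{-s}g) = \overline{\nu(\widetilde{\tau_{-s}g})}$, and a direct computation gives $\widetilde{\tau_{-s}g} = \tau_s \widetilde{g}$, so $(\tau_s \widetilde{\nu})(g) = \overline{\nu(\tau_s \widetilde{g})} = \overline{(\tau_{-s}\nu)(\widetilde{g})} = \widetilde{\tau_{-s}\nu}(g)$. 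Setting $s = -t$ yields the claim.

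Next, given that $\mu\ebeA\nu$ exists, by definition $\leftbrac \mu , \widetilde{\nu} \rightbrac_{\cA}$ exists and equals $\mu \ebeA \nu$. Applying the previous lemma (the twisted version of this corollary) to the pair $(\mu,\widetilde{\nu})$ gives that both $\leftbrac \tau_t \mu , \widetilde{\nu} \rightbrac_{\cA}$ and $\leftbrac \mu , \tau_{-t}\widetilde{\nu} \rightbrac_{\cA}$ exist and
\[
\tau_t \leftbrac \mu , \widetilde{\nu} \rightbrac_{\cA} = \leftbrac \tau_t \mu , \widetilde{\nu} \rightbrac_{\cA} = \leftbrac \mu , \tau_{-t}\widetilde{\nu} \rightbrac_{\cA}.
\]
Using $\mu\ebeA\nu = \leftbrac \mu , \widetilde{\nu}\rightbrac_{\cA}$ and $(\tau_t\mu)\ebeA\nu = \leftbrac \tau_t\mu , \widetilde{\nu}\rightbrac_{\cA}$, the first equality immediately gives $\tau_t(\mu\ebeA\nu) = (\tau_t\mu)\ebeA\nu$. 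For the second, the identity of the first paragraph rewrites $\tau_{-t}\widetilde{\nu}$ as $\widetilde{\tau_t\nu}$, so $\leftbrac \mu , \tau_{-t}\widetilde{\nu}\rightbrac_{\cA} = \leftbrac \mu , \widetilde{\tau_t\nu}\rightbrac_{\cA} = \mu \ebeA (\tau_t\nu)$, and the existence and the remaining equality follow.

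There is no real obstacle here; the only point that requires any thought is getting the sign right in $\widetilde{\tau_t \nu} = \tau_{-t}\widetilde{\nu}$, which is simply the reflection property of the $\widetilde{\cdot}$ operation on measures and matches the analogous identity for functions used implicitly in the twisted analog. Everything else is bookkeeping by way of the definition $\leftbrac \cdot , \cdot \rightbrac_{\cA} = \cdot \ebeA \widetilde{\cdot}$.
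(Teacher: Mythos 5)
Your proposal is correct and follows exactly the route the paper intends: the corollary is stated with only a \qed because it is meant to be deduced from the preceding lemma on the twisted convolution via $\mu \ebeA \nu = \leftbrac \mu , \widetilde{\nu} \rightbrac_{\cA}$ together with $\widetilde{\tau_t\nu} = \tau_{-t}\widetilde{\nu}$. Your sign bookkeeping in that reflection identity is right, so there is nothing to add.
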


\begin{remark}
The twisted Eberlein convolution for measures satisfies the following translation compatibility relation: If $\mu, \nu \in \cM^\infty(G)$ and $\cA$ are such that
$\leftbrac \mu , \nu \rightbrac_{\cA} $ exists, then  the convolution $
\leftbrac \tau_t \mu , \tau_t \nu \rightbrac_{\cA} $ exists for all $t \in G$ and
\[
\leftbrac \tau_t \mu , \tau_t \nu \rightbrac_{\cA}=\leftbrac \mu , \nu \rightbrac_{\cA} \ts. \tag*{\exend}
\]
\end{remark}

Next we discuss the existence of the twisted Eberlein convolution. We start by proving the following preliminary result (compare \cite{BL}).

\begin{lemma}\label{lem:tebe exists subseq}
Let $\mu,\nu\in\mc{M}^{\infty}(G)$ and $\mc{A}=(A_n)_{n\in\NN}$ a van Hove sequence. Then, there exist a constant $C >0$ and a precompact open set $U$ such that
\[
\frac{1}{|A_n|}   ( \mu|_{A_n} * \widetilde{\nu}) \in \cM_{C,U} \ts.
\]
Moreover, $\cM_{C,U}$ is vaguely compact and metrisable.
\end{lemma}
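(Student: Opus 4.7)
The statement splits into two parts: (i) the existence of uniform constants $C$ and $U$ with $\rho_n := \tfrac{1}{|A_n|}(\mu|_{A_n} * \widetilde{\nu}) \in \cM_{C,U}$ for all $n$, and (ii) vague compactness and metrisability of $\cM_{C,U}$. The plan is to establish (i) by a direct estimate and to invoke standard facts about translation-bounded measures for (ii).

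For (i), I would fix any precompact open set $U \ni 0$ and bound $|\rho_n|(t+U)$ uniformly in $t \in G$ and $n \in \NN$. Dominating the total variation of a convolution by the convolution of the total variations, and using $|\widetilde{\nu}|(B) = |\nu|(-B)$, gives
\[
|\rho_n|(t+U) \leqslant \frac{1}{|A_n|}\int_G |\nu|(s-t-U)\, d|\mu|_{A_n}(s) \leqslant \|\nu\|_{-U}\cdot \frac{|\mu|(A_n)}{|A_n|}.
\]
The only nontrivial remaining step is to bound $|\mu|(A_n)/|A_n|$ independently of $n$. This is a routine consequence of translation boundedness of $\mu$ together with the van Hove property: pick a fixed symmetric precompact open set $V \ni 0$, take a maximal $V$-separated subset $\{t_i\}$ of $A_n$, which then yields both a packing of disjoint translates $t_i + V/2 \subseteq A_n + V/2$ and a covering $A_n \subseteq \bigcup_i (t_i + V)$. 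The resulting estimate
\[
|\mu|(A_n) \leqslant \|\mu\|_V \cdot \frac{|A_n+V/2|}{|V/2|}
\]
combined with $|A_n+V|/|A_n| = 1 + |\partial^{V}A_n|/|A_n|$, which is bounded thanks to the van Hove property, produces a uniform bound $|\mu|(A_n)/|A_n| \leqslant C'$. Setting $C := \|\nu\|_{-U}\cdot C'$ finishes (i).

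Part (ii) is standard and I would just cite it from the literature on translation bounded measures (e.g.\ \cite{BL,MoSt,ARMA1}). The bound $\|\rho\|_U \leqslant C$ translates via a covering argument into $|\rho(\varphi)| \leqslant C\|\varphi\|_\infty \cdot N(\supp\varphi, U)$ for every $\varphi \in \Cc(G)$, where $N(\supp\varphi, U)$ is the minimal number of translates of $U$ needed to cover $\supp \varphi$. Hence $\cM_{C,U}$ is vaguely bounded, and vague compactness follows from a Banach--Alaoglu-type argument together with vague closedness of the relation $\|\rho\|_U \leqslant C$. Metrisability follows from separability of $\Cc(G)$ in the relevant inductive-limit sense, which is inherited from the assumed second countability of $G$.

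The only genuine content is thus the uniform bound $\sup_n |\mu|(A_n)/|A_n| < \infty$, and I do not anticipate any serious obstacle beyond carefully assembling the packing/covering estimate.
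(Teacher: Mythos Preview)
Your proposal is correct and follows essentially the same strategy as the paper: bound $|\rho_n|(t+U)$ by $\|\nu\|_{-U}\cdot |\mu|(A_n)/|A_n|$ via the total-variation-of-convolution estimate, then use translation boundedness and the van Hove property to control $\sup_n |\mu|(A_n)/|A_n|$, and finally cite the standard compactness/metrisability result for $\cM_{C,U}$. The only difference is that the paper outsources both the convolution bound and the $\sup_n |\mu|(A_n)/|A_n| < \infty$ estimate to references (\cite[Lemma~6.1]{NS11} and \cite[Lemma~1.1(2)]{Martin2}, respectively) whereas you sketch the latter directly via a packing/covering argument; this is not a genuinely different route.
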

\begin{proof}
Let $K,K'$ be compact sets, and let $U\neq \varnothing$ be precompact such that $U \subseteq K \subseteq (K')^{\circ}$.
First note that the sequence $\left(|A_n|^{-1}\cdot |\mu|(A_n)\right)_{n\in\NN}$ is bounded from above by a constant $C'>0$; see \cite[Lemma~1.1.(2)]{Martin2}.
Set $C:= C' \| \widetilde{\nu} \|_{K'}$.
Then, we have
\[
\Big|\frac{1}{|A_n|}  \big( \mu|_{A_n}  * \widetilde{\nu} \big) \Big|(t+K) \leqslant \| \widetilde{\nu}\|_{t+K'} \cdot |\mu|(A_n)  \leqslant C' \| \widetilde{\nu} \|_{K'} =C
\]
by \cite[Lemma~6.1]{NS11}.
Taking the supremum over all $t\in G$, we obtain
\[
\Big\|\frac{1}{|A_n|}  \big( \mu|_{A_n}  * \widetilde{\nu} \big)\Big\|_{K} \leqslant  C \ts.
\]
Since $U \subset K$ we thus get in particular
\[
\Big\|\frac{1}{|A_n|}  \big( \mu|_{A_n}  * \widetilde{\nu} \big)\Big\|_{U} \leqslant  C \ts,
\]
which proves the first claim.

Now, as $G$ is second countable, $\cM_{C,U}$ is metrisable and compact by \cite[Thm.~2]{BL}.
\end{proof}

As an immediate consequence of Lemma~\ref{lem:tebe exists subseq}, we get the following result which shows the existence of (twisted) Eberlein convolution of arbitrary
measures along subsequences, as well as the biliniarity of the (twisted) Eberlein convolution.

\begin{theorem}
Let $\mu,\nu,\sigma\in\mc{M}^{\infty}(G), a,b \in \CC$ and let $\cA$ be an arbitrary van Hove sequence. Then,
\begin{itemize}
  \item[(a)] there exists a subsequence $\cB$ of $\cA$ such that $ \leftbrac  \mu, \nu  \rightbrac_{\cB}$ exists.
  \item[(b)] there exists a subsequence $\cB$ of $\cA$ such that $\mu \ebeB \nu$ exists.
  \item[(c)] there exists a subsequence $\cB$ of $\cA$ such that $  \leftbrac  a\mu+b\nu, \sigma \rightbrac_{\cB} ,  \leftbrac \mu , \sigma \rightbrac_{\cB} ,  \leftbrac \nu , \sigma \rightbrac_{\cB} $ exist. Moreover, for all subsequences
  for which these twisted Eberlein convolutions exist we have
\[
 \leftbrac a\mu+b\nu , \sigma \rightbrac_{\cB} =a \leftbrac   \mu,  \sigma \rightbrac_{\cB} + b  \leftbrac \nu , \sigma \rightbrac_{\cB}  \ts.
\]
\item[(d)] There exists a subsequence $\cB$ of $\cA$ such that $ \leftbrac  \mu, a \nu+b\sigma \rightbrac_{\cB} ,  \leftbrac \mu , \nu \rightbrac_{\cB} , \leftbrac \mu, \sigma  \rightbrac_{\cB} $ exist. Moreover, for all subsequences
  for which these twisted Eberlein convolutions exist, we have
\[
 \leftbrac \mu , a \nu+b\sigma \rightbrac_{\cB} =\bar{a} \leftbrac  \mu , \nu \rightbrac_{\cB} + \bar{b}  \leftbrac \mu , \sigma  \rightbrac_{\cB} \ts.
\]
  \item[(e)] There exists a subsequence $\cB$ of $\cA$ such that $(a\mu+b\nu) \ebeB \sigma, \mu \ebeB \sigma, \nu \ebeB \sigma$ exist. Moreover, for all subsequences
  for which these Eberlein convolutions exist we have
\[
(a\mu+b\nu) \ebeB \sigma=a( \mu \ebeB \sigma)+b( \nu \ebeB \sigma) \ts.
\]
\item[(f)] There exists a subsequence $\cB$ of $\cA$ such that $\mu \ebeB (a \nu+b\sigma), \mu \ebeB \nu, \mu \ebeB \sigma$ exist. Moreover, for all subsequences
  for which these twisted Eberlein convolutions exist we have
\[
\mu \ebeB (a \nu+b\sigma)=a \mu \ebeB \nu+ b \mu \ebeB \sigma \ts.   \tag*{\qed}
\]
\end{itemize}
\end{theorem}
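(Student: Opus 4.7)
The plan is to deduce everything from Lemma~\ref{lem:tebe exists subseq} together with Corollary~\ref{coro:second def Ebe}, using a nested subsequence extraction for the bilinearity parts.

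For part (a), I would apply Lemma~\ref{lem:tebe exists subseq} to obtain a constant $C>0$ and a precompact open $U$ such that the sequence $\bigl(\tfrac{1}{|A_n|}\,\mu|_{A_n}*\widetilde{\nu}\bigr)_{n\in\NN}$ lies entirely in the vaguely compact and metrisable set $\cM_{C,U}$. Metrisability gives a vaguely convergent subsequence $\cB = (A_{n_k})_k$, and Corollary~\ref{coro:second def Ebe} then identifies the limit with $\leftbrac \mu,\nu\rightbrac_{\cB}$. Part (b) is immediate from (a) via the identity $\mu \ebeA \nu = \leftbrac \mu,\widetilde{\nu}\rightbrac_{\cA}$, noting that $\widetilde{\nu}\in\cM^\infty(G)$ whenever $\nu\in\cM^\infty(G)$.

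For parts (c) and (d), I would iterate part (a) three times. Starting from $\cA$, pass to a subsequence $\cB_1$ along which $\leftbrac a\mu+b\nu,\sigma\rightbrac$ exists; then to a further subsequence $\cB_2\subseteq\cB_1$ along which $\leftbrac\mu,\sigma\rightbrac$ exists; then to $\cB\subseteq\cB_2$ along which $\leftbrac\nu,\sigma\rightbrac$ exists. Since each subsequent subsequence inherits convergence of the earlier twisted Eberlein convolutions, all three exist along $\cB$. For the bilinearity identity, observe directly at the pre-limit level that
\[
\tfrac{1}{|A_n|}\,(a\mu+b\nu)|_{A_n}*\widetilde{\sigma}
= a\cdot\tfrac{1}{|A_n|}\,\mu|_{A_n}*\widetilde{\sigma}
+ b\cdot\tfrac{1}{|A_n|}\,\nu|_{A_n}*\widetilde{\sigma},
\]
so whenever all three limits exist along any subsequence $\cB$, the identity in (c) follows by vague continuity of finite linear combinations. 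Part (d) is handled identically: the nested subsequence extraction yields a common $\cB$ for $\leftbrac\mu,a\nu+b\sigma\rightbrac_{\cB}$, $\leftbrac\mu,\nu\rightbrac_{\cB}$ and $\leftbrac\mu,\sigma\rightbrac_{\cB}$, and conjugate-linearity in the second argument follows from $\widetilde{a\nu+b\sigma}=\bar a\,\widetilde{\nu}+\bar b\,\widetilde{\sigma}$ combined with the same pre-limit decomposition.

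Parts (e) and (f) reduce to (c) and (d) by the same translation $\mu\ebeA\nu = \leftbrac\mu,\widetilde{\nu}\rightbrac_{\cA}$ used in (b); conjugation in the second slot disappears, so one obtains honest linearity in both arguments. The only step that needs any real care is the nested subsequence extraction, but since part (a) applies to every pair of translation bounded measures and metric compactness permits successive refinement, this is routine. There is no essential obstacle beyond organising the three (respectively four) successive extractions cleanly.
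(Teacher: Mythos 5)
Your proposal is correct and follows exactly the route the paper intends: the theorem is stated there as an immediate consequence of Lemma~\ref{lem:tebe exists subseq} (vague compactness and metrisability of $\cM_{C,U}$ giving subsequential limits, identified via Corollary~\ref{coro:second def Ebe}), with the bilinearity read off from the linearity of $\rho\mapsto\rho|_{A_n}$ and of convolution at the pre-limit level. The only cosmetic remark is that in (c) and (d) two extractions already suffice, since once $\leftbrac\mu,\sigma\rightbrac_{\cB}$ and $\leftbrac\nu,\sigma\rightbrac_{\cB}$ exist the third limit exists automatically as a linear combination of convergent sequences.
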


\medskip

Next, we want to move the Besicovitch topology to measures, compare \cite{LSS4}. In order to do this, note that the
Besicovitch semi-norm for functions induces a family $\{ N_{\varphi,p, \cA} : \varphi \in \Cc(G) \}$ of semi-norms on $\cM^\infty(G)$ via
\[
N_{\varphi, p, \cA}(\mu):= \| \varphi*\mu \|_{b,p ,\cA} \ts.
\]

By combining Lemma~\ref{lem87} with Lemma~\ref{lemma norm}, we immediately obtain the following estimates.

\begin{prop}\label{prop 2}
Let $\mu, \nu \in \cM^\infty(G)$ be such that $ \leftbrac \mu , \nu
\rightbrac_{\cA} $ is well defined and $\varphi, \psi \in \Cc(G)$.
Then,
\begin{itemize}
  \item[(a)] For all $1 < p,q <\infty$ conjugate, we have
\[
\| \leftbrac  \mu , \nu \rightbrac_{\cA}
\|_{\varphi*\widetilde{\psi}} \leqslant N_{\varphi, p , \cA}( \mu)
N_{\psi, q , \cA}(\nu) \ts.
\]
  \item[(b)] In particular, one has
\[
\| \leftbrac  \mu , \nu \rightbrac_{\cA}
\|_{\varphi*\widetilde{\psi}} \leqslant N_{\varphi, 2 , \cA}( \mu)
N_{\psi, 2 , \cA}(\nu) \ts.
\]
\item[(c)] Let $K =\supp(\psi)$. Then,
\[
\|  \leftbrac \mu ,  \nu\rightbrac_{\cA}  \|_{\varphi*\widetilde{\psi}} \leqslant N_{\varphi, 1 , \cA}( \mu) \| \nu*\psi \|_\infty
\leqslant N_{\varphi, 1 , \cA}( \mu) \|\psi \|_\infty  \| \nu \|_K \ts.  \tag*{\qed}
\]
\end{itemize}
\end{prop}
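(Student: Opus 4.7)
The plan is to reduce all three estimates to sup-norm bounds on the twisted Eberlein convolution of the two continuous bounded functions $f := \mu * \varphi$ and $g := \nu * \psi$, to which Lemma~\ref{lemma norm} applies directly. The key identity making this possible is Corollary~\ref{lem87}, which gives
\[
\| \leftbrac \mu , \nu \rightbrac_{\cA} \|_{\varphi * \widetilde{\psi}} = \| \leftbrac \mu , \nu \rightbrac_{\cA} * (\varphi * \widetilde{\psi}) \|_\infty = \| \leftbrac \mu * \varphi , \nu * \psi \rightbrac_{\cA} \|_\infty \ts.
\]
Since $\mu, \nu \in \cM^\infty(G)$, the functions $f$ and $g$ lie in $\Cu(G) \cap L^\infty(G)$, hence in $L^p_{\mathrm{loc}}(G)$ and $L^q_{\mathrm{loc}}(G)$ for every exponent, and the existence of $\leftbrac f , g \rightbrac_{\cA}$ is inherited from the existence of $\leftbrac \mu , \nu \rightbrac_{\cA}$ through the same corollary.

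For part (a), I would then apply Lemma~\ref{lemma norm} with the conjugate pair $(p, q)$ directly to $f$ and $g$, obtaining
\[
\| \leftbrac f , g \rightbrac_{\cA} \|_\infty \leqslant \| f \|_{b, p, \cA} \cdot \| g \|_{b, q, \cA} = N_{\varphi, p, \cA}(\mu) \cdot N_{\psi, q, \cA}(\nu) \ts,
\]
which is the required bound. Part (b) will then follow as the specialisation $p = q = 2$ without any additional work.

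For part (c), the first inequality is the $p = 1$, $q = \infty$ endpoint of the Hölder-type estimate: one has
\[
|\leftbrac f , g \rightbrac_{\cA}(t)| \leqslant \limsup_{n\to\infty} \frac{1}{|A_n|} \int_{A_n} |f(s)| \cdot |g(s - t)| \ \dd s \leqslant \|f\|_{b,1,\cA} \cdot \|g\|_\infty \ts,
\]
which is a direct one-line calculation inside the mean. For the second inequality, writing $(\nu * \psi)(t) = \int \psi(t - s) \ \dd \nu(s)$, I would estimate $|(\nu * \psi)(t)| \leqslant \|\psi\|_\infty \cdot |\nu|(t - K)$ with $K = \supp(\psi)$ and take the supremum over $t$.

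The proof is essentially bookkeeping once Corollary~\ref{lem87} is in hand, so no serious obstacle is expected. The only minor subtlety is the $p = 1$, $q = \infty$ endpoint used in (c), which is not literally a case of Lemma~\ref{lemma norm} as stated but follows instantly from the same Hölder argument inside the mean functional.
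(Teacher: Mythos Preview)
Your proposal is correct and follows exactly the route the paper indicates: the paper's entire proof is the one-line remark ``By combining Lemma~\ref{lem87} with Lemma~\ref{lemma norm}, we immediately obtain the following estimates,'' and you have spelled out precisely that combination, including the trivial $p=1$ endpoint for part~(c). Your treatment is in fact more detailed than the paper's, which leaves the reduction implicit.
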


 As an immediate consequence we get similar estimates for the Eberlein convolution.

\begin{coro}
Let $\mu, \nu \in \cM^\infty(G)$ be such that $\mu \ebeA \nu$ is well defined and $\varphi, \psi \in \Cc(G)$.
\begin{itemize}
  \item[(a)] For all $1 < p,q <\infty$ conjugate, we have
\[
\| \mu \ebeA \nu \|_{\varphi*\widetilde{\psi}} \leqslant N_{\varphi, p , \cA}( \mu) N_{\psi, q , -\cA}(\nu) \ts.
\]
  \item[(b)] In particular, one has
\[
\| \mu \ebeA \nu \|_{\varphi*\widetilde{\psi}} \leqslant N_{\varphi, 2 , \cA}( \mu) N_{\psi, 2 , -\cA}(\nu) \ts.
\]
\item[(c)] Let $K =\supp(\psi)$. Then
\[
\| \mu \ebeA \nu \|_{\varphi*\widetilde{\psi}} \leqslant N_{\varphi, 1 , \cA}( \mu) \| \nu*\psi \|_\infty
\leqslant N_{\varphi, 1 , \cA}( \mu) \|\psi \|_\infty  \| \nu \|_K \ts.  \tag*{\qed}
\]
\end{itemize}
\end{coro}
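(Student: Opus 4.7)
My plan is to reduce the statement directly to Proposition~\ref{prop 2}, via the identity $\mu \ebeA \nu = \leftbrac \mu , \widetilde{\nu} \rightbrac_{\cA}$ (which is immediate from the definition of the twisted Eberlein convolution together with $\widetilde{\widetilde{\nu}} = \nu$). With this in hand, each of (a), (b), (c) becomes the corresponding part of Proposition~\ref{prop 2} applied to the pair $(\mu, \widetilde{\nu})$, modulo a routine reflection identity that converts the $\cA$-Besicovitch seminorms of $\widetilde{\nu}$ into the $(-\cA)$-Besicovitch seminorms of $\nu$.

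The reflection identity rests on two elementary facts. First, the change of variable $t \mapsto -t$, combined with $|A_n| = |{-A_n}|$ (from inversion-invariance of Haar measure on the LCA group $G$), gives
\[
\| \widetilde{f} \|_{b, q, \cA} \;=\; \| f \|_{b, q, -\cA}
\]
for every $f \in L^q_{\mathrm{loc}}(G)$. Second, a direct computation from $\widetilde{\nu}(g) = \overline{\nu(\widetilde{g})}$ yields the convolution identity $\psi * \widetilde{\nu} = \widetilde{\,\nu * \widetilde{\psi}\,}$ for $\psi \in \Cc(G)$. Combining these,
\[
N_{\psi, q, \cA}(\widetilde{\nu})
  \;=\; \| \psi * \widetilde{\nu} \|_{b, q, \cA}
  \;=\; \| \widetilde{\,\nu * \widetilde{\psi}\,} \|_{b, q, \cA}
  \;=\; \| \nu * \widetilde{\psi} \|_{b, q, -\cA} \ts,
\]
so that applying Proposition~\ref{prop 2}(a) to $(\mu, \widetilde{\nu})$ produces the inequality of part (a) after the harmless relabelling $\widetilde{\psi} \leftrightarrow \psi$ (which is valid since $\Cc(G)$ is stable under the involution $\psi \mapsto \widetilde{\psi}$).

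Part (b) is then just the specialisation $p = q = 2$. Part (c) follows by the same substitution from Proposition~\ref{prop 2}(c): the first inequality becomes $\| \mu \ebeA \nu \|_{\varphi * \widetilde{\psi}} \leqslant N_{\varphi, 1, \cA}(\mu) \| \widetilde{\nu} * \psi \|_\infty$, and the convolution identity above, together with the invariance of the supremum norm under $f \mapsto \widetilde{f}$, rewrites this as $N_{\varphi, 1, \cA}(\mu) \| \nu * \widetilde{\psi} \|_\infty$; after the relabelling $\widetilde{\psi} \leftrightarrow \psi$, this is $N_{\varphi, 1, \cA}(\mu) \| \nu * \psi \|_\infty$ as claimed. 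The final estimate $\| \nu * \psi \|_\infty \leqslant \| \psi \|_\infty \| \nu \|_K$ with $K = \supp(\psi)$ is the standard pointwise bound $|(\nu * \psi)(t)| \leqslant \| \psi \|_\infty \, |\nu|(t - K)$ together with the definition of $\| \nu \|_K$ (with a possible symmetrisation of $K$ to absorb the sign flip in $t - K$).

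The only mildly delicate aspect of the whole argument is the bookkeeping of the reflection operations $\widetilde{\,\cdot\,}$, $^\dagger$, and the sign-flip on $\cA$; once the identity $\| \widetilde{f} \|_{b, q, \cA} = \| f \|_{b, q, -\cA}$ is secured, the corollary follows by a direct specialisation of Proposition~\ref{prop 2}, and I do not foresee any genuine obstacle.
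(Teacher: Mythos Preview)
Your approach is correct and matches the paper's: the corollary is stated with a bare \qed, signalling that it is an immediate consequence of Proposition~\ref{prop 2} via the identity $\mu \ebeA \nu = \leftbrac \mu , \widetilde{\nu} \rightbrac_{\cA}$, and you carry out precisely this reduction together with the routine reflection bookkeeping $\|\widetilde{f}\|_{b,q,\cA} = \|f\|_{b,q,-\cA}$ and $\widetilde{\nu}*\psi = \widetilde{\nu*\widetilde{\psi}}$.

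One small remark: after your relabelling $\widetilde{\psi}\leftrightarrow\psi$, the seminorm subscript on the left-hand side becomes $\varphi*\psi$ rather than $\varphi*\widetilde{\psi}$, so what you actually obtain is $\|\mu\ebeA\nu\|_{\varphi*\psi}\leqslant N_{\varphi,p,\cA}(\mu)\,N_{\psi,q,-\cA}(\nu)$; this is equivalent in content to the stated inequality (since $\psi$ ranges over all of $\Cc(G)$) and is in fact the more natural form coming from $(\mu*\varphi)\ebeA(\nu*\psi) = (\mu\ebeA\nu)*\varphi*\psi$, so the discrepancy is cosmetic rather than a gap in your argument.
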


Similarly with functions, let us introduce the following definition.

\begin{definition}
For $\mu \in \cM^\infty(G)$ we define
\begin{align*}
{\mathcal EB}_{\cA}(\mu)&:= \{ \nu \in \cM^\infty\ts :\ts  \mu \ebeA \nu \mbox{ is well defined} \}  \ts, \\
{\mathcal TEB}_{\cA}(\mu)&:= \{ \nu \in \cM^\infty\ts :\ts
\leftbrac\mu  ,  \nu\rightbrac_{\cA}  \mbox{ is well defined} \} \ts.
\end{align*}
\end{definition}

\smallskip
Now, on $\cM^\infty(G)$ consider the locally convex topology defined by $\{ N_{\varphi, 2, \cA} : \varphi \in \Cc(G) \}$, which we will call
the \emph{ Besicovitch 2-topology for measures}. Note here that a net $(\mu_\alpha)_{\alpha}$ in $\cM^\infty(G)$ converges in the Besicovitch 2-topology to some measure $\mu$ if and only if,
for all $\varphi \in \Cc(G)$, we have
\[
\lim_\alpha \| \mu_\alpha *\varphi -\mu *\varphi \|_{b,2,\cA} \,.
\]

The next result follows immediately from Proposition~\ref{prop 2}.

\begin{theorem}
Let $\mu\in\cM^{\infty}(G)$. The mapping
\[
{\mathcal TEB}_{\cA}(\mu) \to \cM^\infty(G),
 \,, \qquad \nu \mapsto \leftbrac \mu  ,
\nu \rightbrac_{\cA}  \,,
\]
is continuous from the Besicovitch 2-topology to the semi-product topology.  \qed
\end{theorem}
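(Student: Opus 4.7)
The plan is to reduce the continuity statement to the quantitative bound already established in Proposition~\ref{prop 2}(b), combined with the conjugate-linearity of the twisted Eberlein convolution in its second argument.

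First I would fix $\nu_0 \in {\mathcal TEB}_{\cA}(\mu)$ and let $(\nu_\alpha)_{\alpha}$ be a net in ${\mathcal TEB}_{\cA}(\mu)$ converging to $\nu_0$ in the Besicovitch 2-topology, meaning that $N_{\psi, 2, \cA}(\nu_\alpha - \nu_0) \to 0$ for every $\psi \in \Cc(G)$. The goal is to show that $\leftbrac \mu, \nu_\alpha \rightbrac_{\cA} \to \leftbrac \mu, \nu_0 \rightbrac_{\cA}$ in the semi-product topology, i.e.\ $\| \leftbrac \mu, \nu_\alpha \rightbrac_{\cA} - \leftbrac \mu, \nu_0 \rightbrac_{\cA} \|_{\varphi \ast \psi} \to 0$ for every $\varphi, \psi \in \Cc(G)$. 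Here I would use the simple but crucial observation that, since $\widetilde{\psi} \in \Cc(G)$ whenever $\psi \in \Cc(G)$, the families $\{\varphi \ast \psi : \varphi, \psi \in \Cc(G)\}$ and $\{\varphi \ast \widetilde{\psi} : \varphi, \psi \in \Cc(G)\}$ coincide, and it suffices to establish convergence for the semi-norms $\|\cdot\|_{\varphi \ast \widetilde{\psi}}$.

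Next, from the defining limits one checks directly that if $\leftbrac \mu, \nu_\alpha \rightbrac_{\cA}$ and $\leftbrac \mu, \nu_0 \rightbrac_{\cA}$ both exist, then $\nu_\alpha - \nu_0 \in {\mathcal TEB}_{\cA}(\mu)$ with
\[
\leftbrac \mu, \nu_\alpha - \nu_0 \rightbrac_{\cA} = \leftbrac \mu, \nu_\alpha \rightbrac_{\cA} - \leftbrac \mu, \nu_0 \rightbrac_{\cA} \ts,
\]
this being immediate by linearity of the vague limit in the definition (or equivalently via the conjugate-linearity noted earlier in the paper). Applying Proposition~\ref{prop 2}(b) to $\leftbrac \mu, \nu_\alpha - \nu_0 \rightbrac_{\cA}$ then gives
\[
\| \leftbrac \mu, \nu_\alpha \rightbrac_{\cA} - \leftbrac \mu, \nu_0 \rightbrac_{\cA} \|_{\varphi \ast \widetilde{\psi}} \leqslant N_{\varphi, 2, \cA}(\mu) \cdot N_{\psi, 2, \cA}(\nu_\alpha - \nu_0) \ts.
\]
Since $\mu$ is fixed, the factor $N_{\varphi, 2, \cA}(\mu)$ is a finite constant depending only on $\varphi$, and the assumption $N_{\psi, 2, \cA}(\nu_\alpha - \nu_0) \to 0$ finishes the argument.

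There is essentially no hard step here: the entire content is packaged into Proposition~\ref{prop 2}(b). The only minor subtlety to verify is the closure of ${\mathcal TEB}_{\cA}(\mu)$ under differences together with the conjugate-linearity identity above, but this is immediate from the definition of the twisted Eberlein convolution as a vague limit of a linear expression in $\nu$.
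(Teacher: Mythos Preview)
Your proposal is correct and matches the paper's approach exactly: the paper states that the theorem ``follows immediately from Proposition~\ref{prop 2}'' and gives no further argument, and what you have written is precisely the unpacking of that immediate consequence via conjugate-linearity in the second argument and the bound in Proposition~\ref{prop 2}(b).
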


\begin{prop}
Let $\mu\in\cM^{\infty}(G)$. The mapping
\[
{\mathcal EB}_{\cA}(\mu) \to \cM^\infty(G) \,, \qquad \nu \mapsto \nu \ebeA \mu \,,
\]
is continuous from the Besicovitch 2-topology to the semi-product topology. \qed
\end{prop}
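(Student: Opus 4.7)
The plan is to mirror exactly the proof of the preceding theorem on the twisted Eberlein convolution, replacing Proposition~\ref{prop 2} with its Corollary (the Eberlein analogue of the estimate). The argument is entirely structural: continuity of a bilinear map between spaces given by families of semi-norms reduces to a single quantitative estimate.

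First, I would fix $\mu\in\cM^\infty(G)$ together with test functions $\varphi,\psi\in\Cc(G)$, and take a net $(\nu_\alpha)_\alpha$ in $\mathcal{EB}_{\cA}(\mu)$ converging in the Besicovitch $2$-topology to some $\nu \in \mathcal{EB}_{\cA}(\mu)$. Convergence here means $N_{\eta,2,\cA}(\nu_\alpha - \nu) \to 0$ for every $\eta \in \Cc(G)$. Using bilinearity of the Eberlein convolution (part (e) of the existence/bilinearity theorem), I may write
\[
\nu_\alpha \ebeA \mu - \nu \ebeA \mu = (\nu_\alpha - \nu) \ebeA \mu \ts,
\]
which exists by our standing assumption that $\nu_\alpha,\nu \in \mathcal{EB}_{\cA}(\mu)$.

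Next, applying part (b) of the Corollary to Proposition~\ref{prop 2} to the measure $(\nu_\alpha - \nu)$ and $\mu$, I obtain the estimate
\[
\| \nu_\alpha \ebeA \mu - \nu \ebeA \mu \|_{\varphi*\widetilde{\psi}} \leqslant N_{\varphi,2,\cA}(\nu_\alpha - \nu) \cdot N_{\psi,2,-\cA}(\mu) \ts.
\]
The second factor is a finite constant depending only on $\mu$ and $\psi$: indeed, $\mu*\psi \in \Cu(G)$ is bounded, and any bounded function has finite Besicovitch $2$-semi-norm with respect to any van Hove sequence. The first factor tends to $0$ by hypothesis, so the left-hand side tends to $0$ as well. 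Since the family $\{\|\cdot\|_{\varphi*\widetilde{\psi}} : \varphi,\psi \in \Cc(G)\}$ generates the semi-product topology, this establishes the claimed continuity.

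I do not anticipate any real obstacle: the only subtlety is that $\mathcal{EB}_{\cA}(\mu)$ is not declared to be a vector space, but for the continuity of $\nu \mapsto \nu \ebeA \mu$ at a point $\nu$ of its domain it is enough to know that the difference $\nu_\alpha - \nu$ can be fed into the bilinear estimate, which is ensured once both $\nu_\alpha \ebeA \mu$ and $\nu \ebeA \mu$ exist along $\cA$.
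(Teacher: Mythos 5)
Your argument is correct and is exactly the route the paper intends: the statement is recorded with a \qed precisely because it follows at once from the Eberlein-convolution corollary of Proposition~\ref{prop 2}, applied (via bilinearity along $\cA$) to the difference $(\nu_\alpha-\nu)\ebeA\mu$, with the varying first argument controlled by $N_{\varphi,2,\cA}$ and the fixed factor $N_{\psi,2,-\cA}(\mu)$ finite by translation boundedness. Your handling of the existence of $(\nu_\alpha-\nu)\ebeA\mu$ and of the fact that the semi-norms $\|\cdot\|_{\varphi*\widetilde{\psi}}$ generate the semi-product topology is also consistent with the paper's conventions.
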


We can now show that if one of the measures is mean almost periodic, then the twisted Eberlein convolution is strong almost periodic.

\begin{theorem}\label{thm tebe is SAP}
Let $\mu, \nu \in \cM^\infty(G)$ be such that $ \leftbrac \mu , \nu
\rightbrac_{\cA} $ is well defined.
\begin{itemize}
  \item[(a)] If $\mu \in \cM\mathtt{ap}_{\cA}(G)$, then $ \leftbrac \mu , \nu \rightbrac_{\cA}  \in \SAP(G)$.
  \item[(b)] If $\nu \in \cM\mathtt{ap}_{\cA}(G)$, then $ \leftbrac \mu , \nu \rightbrac_{\cA}  \in \SAP(G)$.
\end{itemize}
\end{theorem}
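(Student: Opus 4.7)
My plan is to reduce the measure-level claim to the function-level Theorem~\ref{them tebe is sap} (and its corollary) by convolving with test functions, and then to transport the conclusion back to the measure level via an approximate identity argument. Write $\sigma := \leftbrac \mu, \nu \rightbrac_\cA$, and note that $\sigma \in \cM^\infty(G)$ by Lemma~\ref{lem:tebe exists subseq}, so each $\sigma * \eta$ with $\eta \in \Cc(G)$ lies in $\Cu(G)$. By the definition of $\SAP(G)$ it suffices to show $\sigma * \eta \in SAP(G)$ for every $\eta \in \Cc(G)$.

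The key identity is supplied by Corollary~\ref{lem87}: for all $\varphi, \psi \in \Cc(G)$,
\[
\sigma * (\varphi * \widetilde\psi) = \leftbrac \mu * \varphi, \nu * \psi \rightbrac_\cA.
\]
In case (a), $\mu \in \cM\mathtt{ap}_\cA(G)$ gives $\mu * \varphi \in \operatorname{MAP}_\cA(G) \cap L^\infty(G) \subseteq \operatorname{Map}^2_\cA(G) \cap L^\infty(G)$, while translation boundedness of $\nu$ yields $\nu * \psi \in \Cu(G) \cap L^\infty(G) \subseteq BL^2_\cA(G) \cap L^\infty(G)$ (finiteness of the Besicovitch $2$-seminorm follows from boundedness). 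The corollary immediately after Theorem~\ref{them tebe is sap} then delivers $\leftbrac \mu * \varphi, \nu * \psi \rightbrac_\cA \in SAP(G)$. Case (b) is handled analogously by applying Theorem~\ref{them tebe is sap} itself (with the roles of the two slots swapped), or alternatively via the conjugate symmetry $\leftbrac \mu, \nu \rightbrac_\cA = \widetilde{\leftbrac \nu, \mu \rightbrac_\cA}$ from Lemma~\ref{lem tebe switch}, combined with the easy fact that $\widetilde{\cdot}$ preserves $\SAP(G)$.

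To pass from $\sigma * (\varphi * \widetilde\psi) \in SAP(G)$ to arbitrary test functions $\eta$, fix $\eta \in \Cc(G)$ and let $(\phi_\alpha)_\alpha$ be an approximate identity in $\Cc(G)$ consisting of non-negative functions with $\int \phi_\alpha = 1$ and shrinking supports. Then $\phi_\alpha * \widetilde{\phi_\alpha}$ is again an approximate identity. Setting $\varphi := \eta * \phi_\alpha \in \Cc(G)$ and $\psi := \phi_\alpha \in \Cc(G)$, the previous step gives
\[
(\sigma * \eta) * (\phi_\alpha * \widetilde{\phi_\alpha}) = \sigma * (\varphi * \widetilde\psi) \in SAP(G)
\]
for every $\alpha$. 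Since $\sigma * \eta \in \Cu(G)$, the left-hand side converges uniformly to $\sigma * \eta$, and since $SAP(G)$ is closed in $(\Cu(G), \|\cdot\|_\infty)$ we conclude $\sigma * \eta \in SAP(G)$. The main substantive step is the reduction to the function-level theorem; the subsequent passage back to the measure level is routine once one observes that $\sigma$ itself is translation bounded.
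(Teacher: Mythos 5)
Your proof is correct and follows essentially the same route as the paper: reduce to the function-level Theorem~\ref{them tebe is sap} (and its corollary) via the identity $\leftbrac \mu , \nu \rightbrac_{\cA} * \varphi * \widetilde{\psi} = \leftbrac \mu*\varphi , \nu*\psi \rightbrac_{\cA}$ from Corollary~\ref{lem87}, and handle (b) by the conjugate symmetry of Lemma~\ref{lem tebe switch}. The paper records this as immediate; your explicit approximate-identity argument passing from test functions of the form $\varphi*\widetilde{\psi}$ to arbitrary $\eta\in\Cc(G)$ simply fills in the routine detail the paper leaves implicit.
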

\begin{proof}
(a) follows immediately from Lemma~\ref{lem87} and Theorem~\ref{them tebe is sap}. (b) follows from (a) and Lemma~\ref{lem tebe switch}.
\end{proof}

Setting $\mu=\nu$ in Theorem~\ref{thm tebe is SAP} leads to an
important result in Aperiodic Order. Here, one is interested in the
diffraction measure $\widehat{\gamma}$ of a given measure $\mu$,
i.e. the Fourier transform of the autocorrelation measure
$\gamma:=\leftbrac \mu , \mu \rightbrac_{\cA}$. The case of a pure
point diffraction measure is particularly important because they
describe highly ordered structures. However, $\widehat{\gamma}$ is a
pure point measure if and only if $\gamma$ is a strongly almost
periodic measure \cite[Cor.~4.10.13]{MoSt}. Thus, the following
corollary of Theorem~\ref{thm tebe is SAP} is relevant in this
context.

\begin{coro} \cite{LSS}
Let $\mu \in \cM^\infty(G)$ and let $\gamma$ be the autocorrelation of $\mu$ with respect to $\cA$. If
$\mu \in \cM\mathtt{ap}_{\cA}(G)$, then $\gamma \in \SAP(G)$.  \qed
\end{coro}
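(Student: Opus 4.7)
The plan is to simply read off this statement from Theorem~\ref{thm tebe is SAP}. By the definition of the autocorrelation in the paper, $\gamma = \leftbrac \mu, \mu \rightbrac_{\cA}$, and the hypothesis that $\gamma$ is the autocorrelation of $\mu$ already encodes the existence of this twisted Eberlein convolution along $\cA$. Hence the hypothesis of Theorem~\ref{thm tebe is SAP} is satisfied with $\nu = \mu$.

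Now I would apply part (a) of Theorem~\ref{thm tebe is SAP}: since $\mu \in \cM\mathtt{ap}_{\cA}(G)$ is in the first slot, the conclusion $\leftbrac \mu, \mu \rightbrac_{\cA} \in \SAP(G)$ follows immediately, i.e.\ $\gamma \in \SAP(G)$. (Alternatively, one could invoke part (b) since $\mu$ also sits in the second slot; either way gives the claim.)

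The only thing worth remarking is that the result is essentially free once Theorem~\ref{thm tebe is SAP} is established, so there is no real obstacle at this point. The content was pushed into Theorem~\ref{thm tebe is SAP}, whose proof in turn reduced via Lemma~\ref{lem87} and Corollary~\ref{lem87}-type smoothing to the function-level statement Theorem~\ref{them tebe is sap} about $Map^p_{\cA}$. So the proof is a one-liner: apply Theorem~\ref{thm tebe is SAP}(a) with $\nu = \mu$.
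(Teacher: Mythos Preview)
Your proof is correct and matches the paper's approach exactly: the corollary is stated immediately after Theorem~\ref{thm tebe is SAP} as the special case $\nu=\mu$, with the autocorrelation identified as $\gamma=\leftbrac \mu,\mu\rightbrac_{\cA}$. The paper also treats it as a one-liner, giving no separate proof beyond the \qed.
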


One can even show that $\mu \in \cM\mathtt{ap}_{\cA}(G)$ if and only
if $\gamma \in \SAP(G)$ \cite[Thm. 2.13]{LSS}. However, if $\mu$ and
$\nu$ are different measures, then $ \leftbrac \mu ,
\nu\rightbrac_{\cA}\in\SAP(G)$ does not imply that one of the
measures is mean almost periodic, as the following example shows.

\begin{example}
Let us consider the Bernoulli comb
\[
\omega_B = \sum_{m\in\ZZ} W_m\ts \delta_m \ts,
\]
where $(W_m)_{m\in\ZZ}$ is a family of i.i.d random variables (the
latter taking the values $1$ and $0$ with probabilities $p$ and
$1-p$ with $0<p<1$). Similar to \cite[Prop.~11.1]{TAO}, one can show
that its autocorrelation $ \leftbrac  \omega_B, \omega_B
\rightbrac_{\cA}$ is given by $p^2\delta_{\ZZ}+p(1-p)\delta_0$,
which is not a strongly almost periodic measure. By a previous
argument, $\omega_B$ cannot be mean almost periodic.

Next, let us consider two different generic Bernoulli combs
$\omega_B$ and $\omega_{B'}$. This time, a similar computation for the autocorrelation
gives $ \leftbrac \omega_B , \omega_{B'}
\rightbrac_{\cA}=p^2\delta_{\ZZ}$, which is a strongly almost
periodic measure. Therefore, $ \leftbrac  \omega_B, \omega_{B'}
\rightbrac_{\cA}$ is strongly almost periodic, although neither
$\omega_B$ nor $\omega_{B'}$ is mean almost periodic.  \exend
\end{example}

\begin{remark}
Let $\mu, \nu \in \cM^\infty(G)$ be such that $\mu \in
\cM\mathtt{ap}_{\cA}(G)$. If $ \leftbrac \mu , \nu \rightbrac_{\cA}$
does not exists, the sequence
\[
\frac{1}{|A_n|}   \mu|_{A_n}  * \widetilde{ \nu|_{A_n} }
\]
will have multiple cluster points by Lemma~\ref{lem:tebe exists subseq}. By applying Theorem~\ref{thm tebe is SAP} to the corresponding subsequences of $\cA$, we find that all these cluster points belong to $\SAP(G)$.
The same observation holds if $\mu \in \cM\mathtt{ap}_{\cA}(G)$ is replaced by $\nu \in \cM\mathtt{ap}_{\cA}(G)$. \exend
\end{remark}

Theorem~\ref{thm tebe is SAP} can also be phrased for the Eberlein convolution.

\begin{prop}\label{prop3}
Let $\mu, \nu \in \cM^\infty(G)$ be such that $\mu \ebeA \nu$ is well defined.
\begin{itemize}
  \item[(a)] If $\mu \in \cM\mathtt{ap}_{\cA}(G)$, then $\mu \ebeA \nu \in \SAP(G)$.
  \item[(b)] If $\nu \in \cM\mathtt{ap}_{-\cA}(G)$, then $\mu \ebeA \nu \in \SAP(G)$. \qed
\end{itemize}
\end{prop}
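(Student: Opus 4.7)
The plan is to reduce both statements to Theorem~\ref{thm tebe is SAP} by rewriting the Eberlein convolution as a twisted Eberlein convolution via the identity $\mu \ebeA \nu = \leftbrac \mu , \widetilde{\nu} \rightbrac_{\cA}$, which holds because $\widetilde{\widetilde{\nu}}=\nu$ and by definition $\leftbrac \mu , \eta \rightbrac_{\cA} = \mu \ebeA \widetilde{\eta}$.

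For part (a), I would simply observe that the hypothesis $\mu \in \cM\mathtt{ap}_{\cA}(G)$ together with the above rewriting puts us exactly in the setting of Theorem~\ref{thm tebe is SAP}(a): the twisted Eberlein convolution $\leftbrac \mu , \widetilde{\nu} \rightbrac_{\cA}$ exists (since $\mu \ebeA \nu$ does) and its first argument is mean almost periodic with respect to $\cA$, so the result lies in $\SAP(G)$.

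For part (b), the natural route is to switch the roles of the two measures by means of Lemma~\ref{lem:ebe comm}, which gives
\[
\mu \ebeA \nu \;=\; \nu \circledast_{-\cA} \mu \;=\; \leftbrac \nu , \widetilde{\mu} \rightbrac_{-\cA} \ts,
\]
the second equality again being immediate from the definition of the twisted Eberlein convolution (now with respect to the van Hove sequence $-\cA$). Since the hypothesis is that $\nu \in \cM\mathtt{ap}_{-\cA}(G)$, Theorem~\ref{thm tebe is SAP}(a) applied to the van Hove sequence $-\cA$ then yields $\leftbrac \nu , \widetilde{\mu} \rightbrac_{-\cA} \in \SAP(G)$, and hence $\mu \ebeA \nu \in \SAP(G)$.

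There is no real obstacle; the only thing to double-check is that the existence of $\mu \ebeA \nu$ transfers cleanly to the existence of the relevant twisted Eberlein convolutions with respect to $\cA$ in (a) and $-\cA$ in (b), which is tautological from the defining identities, and that Theorem~\ref{thm tebe is SAP} may indeed be applied with the van Hove sequence $-\cA$ (which is also a van Hove sequence). With those observations the proof is essentially a one-line reduction for each part.
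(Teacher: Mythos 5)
Your argument is correct and is exactly the reduction the authors have in mind (the proposition is stated with no proof, as an immediate consequence of Theorem~\ref{thm tebe is SAP} via $\mu \ebeA \nu = \leftbrac \mu , \widetilde{\nu} \rightbrac_{\cA}$ and the commutation to $-\cA$). The only cosmetic point is that for part (b) you should cite the measure-valued version of the commutativity statement (the corollary following Lemma~\ref{lem tebe switch}) rather than Lemma~\ref{lem:ebe comm}, which is stated for functions.
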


\begin{remark}
Theorem~\ref{thm tebe is SAP} (b) and Proposition~\ref{prop3} (b) emphasize the advantages of working with the twisted Eberlein convolution
over its standard version.   \exend
\end{remark}

Next, we give an estimate for the norms of the twisted Eberlein convolution, which we will need later. First let us state the following results.

\begin{lemma}  \label{lem:2}
Let $\mu,\nu\in \mc{M}^{\infty}(G)$, and let
$\mc{A}=(A_n)_{n\in\NN}$ be a van Hove sequence such that $
\leftbrac \mu ,  \nu \rightbrac_{\cA}$ exists. Then, for each
$\varphi \in \Cc(G)$ and all $t\in G$, the following limit exists
and
\[
\lim_{n\to\infty} \frac{1}{|A_n|} \int_{A_n}
\overline{(\varphi*\nu)(s-t)}\ \dd \mu(s) =
(\widetilde{\varphi}*\leftbrac \mu ,\nu  \rightbrac_{\cA} )(t) \ts.
\]
\end{lemma}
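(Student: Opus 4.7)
The plan is to recognize the left-hand side as $(\widetilde{\varphi}*\sigma_n)(t)$, where $\sigma_n := \frac{1}{|A_n|}\,\mu|_{A_n}*\widetilde{\nu}$, and then conclude by vague convergence. The existence assumption together with Corollary~\ref{coro:second def Ebe} gives $\sigma_n \to \leftbrac \mu, \nu \rightbrac_{\cA}$ vaguely, which is the engine of the argument.

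First I would carry out the identification step. Using Fubini (valid because $\mu|_{A_n}$ is a finite measure and $\widetilde{\nu}$ is translation bounded, so $\mu|_{A_n}*\widetilde{\nu}$ is a well-defined Radon measure), I compute
\begin{align*}
(\widetilde{\varphi}*\sigma_n)(t)
 &= \frac{1}{|A_n|}\int \widetilde{\varphi}(t-x)\, \dd(\mu|_{A_n}*\widetilde{\nu})(x)
  = \frac{1}{|A_n|}\iint \widetilde{\varphi}(t-y-z)\,\dd\mu|_{A_n}(y)\,\dd\widetilde{\nu}(z)\\
 &= \frac{1}{|A_n|}\iint \overline{\varphi(y+z-t)}\,\dd\mu|_{A_n}(y)\,\dd\widetilde{\nu}(z).
\end{align*}
Unwinding $\dd\widetilde{\nu}$ via $\widetilde{\nu}(g)=\overline{\nu(\widetilde{g})}$, the inner integral in $z$ equals $\overline{(\varphi*\nu)(y-t)}$. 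Hence
\[
(\widetilde{\varphi}*\sigma_n)(t) = \frac{1}{|A_n|}\int_{A_n}\overline{(\varphi*\nu)(s-t)}\,\dd\mu(s),
\]
which is exactly the expression whose limit we must determine.

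Next I would close the argument by vague convergence. Since $\varphi\in\Cc(G)$, so is $\widetilde{\varphi}$, and therefore for each fixed $t\in G$ the function $x\mapsto \widetilde{\varphi}(t-x)$ belongs to $\Cc(G)$. Vague convergence $\sigma_n \to \leftbrac \mu, \nu \rightbrac_{\cA}$ then gives
\[
\lim_{n\to\infty}(\widetilde{\varphi}*\sigma_n)(t) = \int \widetilde{\varphi}(t-x)\,\dd\leftbrac\mu,\nu\rightbrac_{\cA}(x) = (\widetilde{\varphi}*\leftbrac\mu,\nu\rightbrac_{\cA})(t),
\]
which is the claimed identity.

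The only real obstacle is step two, namely bookkeeping the conjugations in the double integral and correctly translating an integral against $\widetilde{\nu}$ into one against $\nu$; once that computation is done cleanly, everything else is immediate. No approximate identity or deeper structural result about $\Bap_{\cA}^2$ is needed.
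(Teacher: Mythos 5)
Your proposal is correct and follows essentially the same route as the paper: both identify the expression $\frac{1}{|A_n|}\int_{A_n}\overline{(\varphi*\nu)(s-t)}\,\dd\mu(s)$ with the pairing of $\frac{1}{|A_n|}\bigl(\mu|_{A_n}*\widetilde{\nu}\bigr)$ against the test function $x\mapsto\overline{\varphi(x-t)}$ via the same Fubini computation, and then invoke the vague convergence supplied by Corollary~\ref{coro:second def Ebe}. The only difference is presentational (you unwind the integral first and then pass to the limit, the paper does it in the opposite order), so there is nothing to add.
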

\begin{proof}
Fix $\varphi \in \Cc(G)$ and $t \in G$.
Let $\eps >0$.
Note first that
\[
(\widetilde{\varphi}* \leftbrac \mu ,  \nu \rightbrac_{\cA} )(t)=
\leftbrac \mu , \nu \rightbrac_{\cA} (\overline{ T_t \varphi}) \ts.
\]
Therefore, by Corollary~\ref{coro:second def Ebe}, there exists some $N$ such that
\[
\lim_{n\to\infty} \frac{1}{|A_n|} ( \mu|_{A_n}  * \widetilde{ \nu})
(\overline{ T_t \varphi}) = (\widetilde{\varphi}* \leftbrac \mu ,
\nu \rightbrac_{\cA}  )(t)
\]
for all $n>N$. Next,  we have
\begin{align*}
\frac{1}{|A_n|} ( \mu|_{A_n} * \widetilde{\nu}) ( \overline{ T_t \varphi})
    &= \int_{G} \int_{G} \overline{T_t \varphi (r+s)}\ \dd \widetilde{\nu}(r)\ \dd \mu|_{A_n}(s)
     = \int_{A_n} \int_{G} \overline{\varphi (r+s-t)}\ \dd \widetilde{\nu}(r)\ \dd \mu(s)  \\
    &=  \int_{A_n} \overline{\int_{G} \varphi(-r+s-t)\ \dd\nu(r)}\ \dd \mu(s)
     = \int_{A_n} \overline{\int_{G} \varphi(-r+s-t )\ \dd\nu(r)}\ \dd \mu(s) \\
    &=  \int_{A_n} \overline{(\varphi*\nu)(s-t)}\ \dd \mu(s)
\end{align*}
for all $n\in\NN$. The claim follows.
\end{proof}

 We can now give the following estimates on the norm of the twisted Eberlein convolution, which allows us study continuity and almost periodicity with respect to the corresponding topology.

\begin{lemma}
Let $\nu\in \mc{M}^{\infty}(G)$, and let $\mc{A}=(A_n)_{n\in\NN}$ be a van Hove sequence. Let $K, K'\subseteq G$ be a compact subsets such that $K \subseteq (K')^{\circ}$. Then, there exists a constant $C>0$ such that, for all $\nu \in {\mathcal TEB}_{\cA}(\mu)$ and all $\varphi \in \Cc(G)$, we have
\[
\| \leftbrac \mu ,\nu  \rightbrac_{\cA} \|_{K} \leqslant C\ts
\|\nu\|_{K'}  \qquad  \text{ and } \qquad \| \leftbrac \mu ,  \nu
\rightbrac_{\cA}\|_{\varphi} \leqslant C\ts \| \nu
\|_{\widetilde{\varphi}} \ts.
\]
\end{lemma}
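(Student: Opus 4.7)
The plan is to push the uniform bound on the approximants $\tfrac{1}{|A_n|}\,\mu|_{A_n}*\widetilde{\nu}$ from the proof of Lemma~\ref{lem:tebe exists subseq} through the vague limit via lower semi-continuity of the total variation, and then trade $\widetilde\nu$ for $\nu$ by a covering argument to get the first inequality. The second inequality falls out of Lemma~\ref{lem:2} after replacing $\varphi$ by $\widetilde\varphi$. Throughout, set $C' := \sup_n |\mu|(A_n)/|A_n|$, which is finite by the van Hove property \cite[Lem.~1.1.(2)]{Martin2} and depends only on $\mu$ and $\cA$.

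For the first inequality, the elementary bound
\[
\frac{1}{|A_n|}\,\bigl|\,\mu|_{A_n} * \widetilde{\nu}\,\bigr|(t+L) \leqslant C'\,\|\widetilde\nu\|_L
\]
holds for every compact $L\subseteq G$ and every $t\in G$, via $|\sigma_1*\sigma_2|\leqslant|\sigma_1|*|\sigma_2|$ exactly as in the proof of Lemma~\ref{lem:tebe exists subseq}. Taking $L=K'$ and using $t+K\subseteq (t+K')^\circ$ together with the (standard) lower semi-continuity of $\sigma\mapsto|\sigma|(U)$ for open precompact $U$ under vague convergence yields
\[
\bigl|\leftbrac \mu,\nu\rightbrac_{\cA}\bigr|(t+K) \leqslant \bigl|\leftbrac \mu,\nu\rightbrac_{\cA}\bigr|\bigl((t+K')^\circ\bigr) \leqslant \liminf_{n\to\infty} \frac{1}{|A_n|}\,\bigl|\,\mu|_{A_n}*\widetilde\nu\,\bigr|(t+K') \leqslant C'\,\|\widetilde\nu\|_{K'}.
\]
Now $|\widetilde\nu|(A)=|\nu|(-A)$ gives $\|\widetilde\nu\|_{K'}=\|\nu\|_{-K'}$, and covering the compact set $-K'$ by a finite number $N$ of translates of the nonempty open set $(K')^\circ$ produces $\|\nu\|_{-K'}\leqslant N\,\|\nu\|_{K'}$, uniformly in $\nu\in\cM^\infty(G)$. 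Taking the supremum over $t\in G$ gives $\|\leftbrac \mu,\nu\rightbrac_{\cA}\|_K\leqslant C'N\,\|\nu\|_{K'}$.

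For the second inequality, apply Lemma~\ref{lem:2} with $\widetilde{\varphi}$ in place of $\varphi$ (using $\widetilde{\widetilde{\varphi}}=\varphi$) to get
\[
(\varphi * \leftbrac \mu,\nu\rightbrac_{\cA})(t) = \lim_{n\to\infty} \frac{1}{|A_n|} \int_{A_n} \overline{(\widetilde\varphi * \nu)(s-t)}\ \dd\mu(s)
\]
for every $t\in G$. Estimating absolutely and using $|\mu|(A_n)/|A_n|\leqslant C'$ gives
\[
|(\varphi*\leftbrac\mu,\nu\rightbrac_{\cA})(t)|\leqslant C'\,\|\widetilde\varphi*\nu\|_\infty = C'\,\|\nu\|_{\widetilde\varphi},
\]
so taking the supremum over $t$ yields $\|\leftbrac\mu,\nu\rightbrac_{\cA}\|_\varphi\leqslant C'\,\|\nu\|_{\widetilde\varphi}$. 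Setting $C:=\max(C'N,C')$, which depends only on $\mu$, $\cA$, and $K'$, establishes both inequalities. The only genuinely delicate step is the lower semi-continuity of total variation under vague convergence, obtained by testing against $f\in\Cc(G)$ with $|f|\leqslant 1_U$; the rest is a routine combination of $|\sigma_1*\sigma_2|\leqslant|\sigma_1|*|\sigma_2|$ and a covering argument compensating for the asymmetry between $\widetilde\nu$ and $\nu$.
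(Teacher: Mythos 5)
Your proof is correct and follows essentially the same route as the paper: a uniform bound on the approximating sequence (via $|\sigma_1*\sigma_2|\leqslant|\sigma_1|*|\sigma_2|$ and the van Hove bound on $|\mu|(A_n)/|A_n|$) pushed through the vague limit by lower semi-continuity of the total variation on open sets, for the first inequality, and Lemma~\ref{lem:2} applied with $\widetilde{\varphi}$ in place of $\varphi$ for the second. The one genuine difference is your choice of approximant $\frac{1}{|A_n|}\,\mu|_{A_n}*\widetilde{\nu}$ rather than $\frac{1}{|A_n|}\,\mu*\widetilde{\nu|_{A_n}}$: this makes the constant depend only on $\mu$, $\cA$ and $K'$ and yields a bound in terms of $\|\nu\|_{K'}$, exactly as the statement asserts, at the harmless cost of the covering step converting $\|\widetilde{\nu}\|_{K'}=\|\nu\|_{-K'}$ into $N\,\|\nu\|_{K'}$; the paper's own computation, which restricts $\nu$ instead, actually terminates in $C\,\|\mu\|_{K'}$ with the roles of the two measures tangled, so your version is the one that matches the claim as stated.
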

\begin{proof}
By \cite[Lemma~1.1.(2)]{Martin2}, there exists a constant $C$ such that
\[
\frac{|\mu|(A_n)}{|A_n|} < C \qquad \text{ for all $n$} \ts.
\]
Pick some $\varphi \in \Cc(G)$ such that $1_{K} \leqslant \varphi \leqslant 1_{K'}$.
Then,
\begin{align*}
\| \leftbrac \mu , \nu \rightbrac_{\cA} \|_K
   &= \sup_{t\in G} | \leftbrac \mu , \nu \rightbrac_{\cA} |(t+K)  \\
   &\leqslant \sup_{t\in G}\ts \limsup_{n\to\infty} \frac{1}{|A_n|} \int_G\int_G \varphi(x+y-t )
      \ \dd|\mu|(x)\ \dd|\widetilde{\nu}|_{A_n}|(y)     \\
   &\leqslant \sup_{t\in G}\ts \limsup_{n\to\infty} \frac{1}{|A_n|} \int_G|\mu|(t+K')
     \ \dd|\widetilde{\nu}|_{A_n}|(y)   \\
   &\leqslant \|\mu\|_{K'} \limsup_{n\to\infty} \frac{1}{|A_n|} \int_{G} \ \dd|\widetilde{\nu}|_{A_n}|(y)   \\
   &\leqslant C\ts \|\mu\|_{K'}.
\end{align*}
This proves the first inequality.

\medskip

Next, by Lemma~\ref{lem:2}, we have
\begin{align*}
\| \leftbrac \mu , \nu \rightbrac_{\cA} \|_{\varphi}
    &= \sup_{t \in G} |(\widetilde{\widetilde{\varphi}}* \leftbrac \mu , \nu \rightbrac_{\cA}  )(t) | \\
    &= \sup_{t \in G} \left|\lim_{n\to\infty} \frac{1}{|A_n|} \int_{A_n} \overline{(\widetilde{\varphi}*\nu)
       (s-t)}\ \dd \mu(s) \right| \\
   &\leqslant \sup_{t \in G} \limsup_{n\to\infty} \frac{1}{|A_n|} \int_{A_n}\| \widetilde{\varphi}*\nu
      \|_\infty\  \dd |\mu|(s)  \\
    &\leqslant \sup_{t \in G} C \ts \| \nu \|_{\widetilde{\varphi}}= C \ts \| \nu \|_{\widetilde{\varphi}}  \ts. \qedhere
\end{align*}
\end{proof}

Next, we can show that the (twisted) Eberlein convolution is continuous with respect to the norm and product topology.

\begin{prop}
Let $K$ be any compact set with non-empty interior.
\begin{itemize}
\item[(a)] The mapping $\nu \mapsto  \leftbrac  \mu, \nu \rightbrac_{\cA} $ is continuous from $({\mathcal TEB}_{\cA}(\mu), \tau_{\operatorname{p}})$ to $(\cM^\infty(G), \tau_{\operatorname{p}})$.
\item[(b)] The mapping $\nu \mapsto  \leftbrac \mu , \nu \rightbrac_{\cA}$ is continuous from $({\mathcal TEB}_{\cA}(\mu), \| \cdot \|_{K})$ to $(\cM^\infty, \| \cdot \|_{K})$.
\end{itemize}
\end{prop}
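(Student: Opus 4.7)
The plan for both parts is the same: reduce continuity to the norm estimates in the preceding lemma via the conjugate linearity of the twisted Eberlein convolution in its second slot. The first thing I would check is that whenever $\nu_1, \nu_2 \in \mathcal{TEB}_{\cA}(\mu)$, one has $\nu_1 - \nu_2 \in \mathcal{TEB}_{\cA}(\mu)$ and
\[
\leftbrac \mu, \nu_1 - \nu_2 \rightbrac_{\cA} = \leftbrac \mu, \nu_1 \rightbrac_{\cA} - \leftbrac \mu, \nu_2 \rightbrac_{\cA}.
\]
This is immediate from writing
\[
\frac{1}{|A_n|}\, \mu|_{A_n} * \widetilde{(\nu_1 - \nu_2)|_{A_n}} = \frac{1}{|A_n|}\,\mu|_{A_n} * \widetilde{\nu_1|_{A_n}} - \frac{1}{|A_n|}\,\mu|_{A_n} * \widetilde{\nu_2|_{A_n}},
\]
since both right-hand summands converge vaguely by hypothesis.

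For part (a), given a net $\nu_\alpha \to \nu$ in $\tau_{\operatorname{p}}$ and any $\varphi \in \Cc(G)$, I would apply the second estimate of the previous lemma to the difference $\nu_\alpha - \nu$ to obtain
\[
\| \leftbrac \mu, \nu_\alpha \rightbrac_{\cA} - \leftbrac \mu, \nu \rightbrac_{\cA}\|_{\varphi} = \| \leftbrac \mu, \nu_\alpha - \nu \rightbrac_{\cA}\|_{\varphi} \leqslant C\, \|\nu_\alpha - \nu\|_{\widetilde{\varphi}}.
\]
Since $\widetilde{\varphi}\in \Cc(G)$ and convergence in $\tau_{\operatorname{p}}$ means precisely $\|\nu_\alpha - \nu\|_\psi \to 0$ for every $\psi \in \Cc(G)$, the right-hand side tends to zero, which gives the desired continuity.

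For part (b), I would apply the first estimate of the previous lemma. Fix the compact $K$ with non-empty interior and pick a compact $K'$ with $K \subseteq (K')^\circ$; then
\[
\| \leftbrac \mu, \nu_\alpha - \nu \rightbrac_{\cA}\|_{K} \leqslant C\, \|\nu_\alpha - \nu\|_{K'}.
\]
The main (mild) obstacle is the mismatch between the domain norm $\|\cdot\|_K$ and the control provided on the right by $\|\cdot\|_{K'}$. To bridge the gap I would invoke the standard equivalence of the seminorms $\|\cdot\|_{K_1}$ and $\|\cdot\|_{K_2}$ on translation bounded measures whenever both $K_1, K_2$ are compact with non-empty interior: covering $K'$ by finitely many translates $t_1 + K^\circ, \ldots, t_n + K^\circ$ gives $\|\sigma\|_{K'} \leqslant n\,\|\sigma\|_{K}$ for every $\sigma \in \cM^\infty(G)$. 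Combining the two inequalities, $\|\nu_\alpha - \nu\|_K \to 0$ forces $\|\leftbrac \mu, \nu_\alpha \rightbrac_{\cA} - \leftbrac \mu, \nu \rightbrac_{\cA}\|_K \to 0$, completing the proof.
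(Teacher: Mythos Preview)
Your proof is correct and follows the same route as the paper: part (a) comes directly from the estimate $\|\leftbrac \mu,\nu\rightbrac_{\cA}\|_{\varphi}\leqslant C\,\|\nu\|_{\widetilde{\varphi}}$ of the preceding lemma, and part (b) from the estimate $\|\leftbrac \mu,\nu\rightbrac_{\cA}\|_{K}\leqslant C\,\|\nu\|_{K'}$ together with the equivalence of the norms $\|\cdot\|_{K}$ and $\|\cdot\|_{K'}$ on $\cM^\infty(G)$ for compact sets with non-empty interior. The paper's own proof says only that (a) is obvious and that (b) follows from this norm equivalence; you have simply written out the details, including the additivity step $\leftbrac \mu,\nu_1-\nu_2\rightbrac_{\cA}=\leftbrac \mu,\nu_1\rightbrac_{\cA}-\leftbrac \mu,\nu_2\rightbrac_{\cA}$ that the paper leaves implicit.
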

\begin{proof}
(a) is obvious, while (b) follows from the fact that any two compact sets with non-empty interior define equivalent norms \cite{bm,SS1}.
\end{proof}

This result has some interesting consequences. First, let us show that the twisted Eberlein convolution preserves norm almost periodicity.

\begin{coro}
Let $\mu,\nu\in \mc{M}^{\infty}(G)$ and let $\mc{A}=(A_n)_{n\in\NN}$
be a van Hove sequence such that $ \leftbrac \mu,\nu
\rightbrac_{\cA}$ exists. If $\mu$ is norm almost periodic, then $
\leftbrac  \mu, \nu \rightbrac_{\cA}$ is norm almost periodic.  \qed
\end{coro}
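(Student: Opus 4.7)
The plan is to reduce norm almost periodicity of $\leftbrac \mu, \nu \rightbrac_{\cA}$ to that of $\mu$ by exploiting bilinearity, the translation compatibility of the twisted Eberlein convolution, and the norm estimate proved in the preceding lemma. Fix a compact set $K \subseteq G$ with non-empty interior, choose an auxiliary compact $K'$ with $K \subseteq (K')^{\circ}$, and fix $\eps > 0$. The aim is to show that the set
\[
\{t \in G : \| \tau_t \leftbrac \mu, \nu \rightbrac_{\cA} - \leftbrac \mu, \nu \rightbrac_{\cA} \|_K < \eps \}
\]
is relatively dense.

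By the translation compatibility established earlier, $\leftbrac \tau_t \mu, \nu \rightbrac_{\cA}$ exists for every $t \in G$ and equals $\tau_t \leftbrac \mu, \nu \rightbrac_{\cA}$. Combined with bilinearity, the convolution $\leftbrac \tau_t \mu - \mu, \nu \rightbrac_{\cA}$ also exists and satisfies
\[
\tau_t \leftbrac \mu, \nu \rightbrac_{\cA} - \leftbrac \mu, \nu \rightbrac_{\cA} = \leftbrac \tau_t \mu - \mu, \nu \rightbrac_{\cA} .
\]
Applying the preceding lemma with first argument $\tau_t \mu - \mu$ and second argument $\nu$, one obtains a constant $C = C(\nu, K, K') > 0$, independent of $t$, such that
\[
\| \tau_t \leftbrac \mu, \nu \rightbrac_{\cA} - \leftbrac \mu, \nu \rightbrac_{\cA} \|_K \;\leqslant\; C \,\| \tau_t \mu - \mu \|_{K'} .
\]

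Since $\mu$ is norm almost periodic and the notion does not depend on the chosen compact set with non-empty interior (any two such choices yielding equivalent norms), the set $\{ t \in G : \| \tau_t \mu - \mu \|_{K'} < \eps/C \}$ is relatively dense, and is contained in the set displayed above. As $\eps > 0$ was arbitrary, $\leftbrac \mu, \nu \rightbrac_{\cA}$ is norm almost periodic. The only substantive point worth flagging is the direction of the norm estimate used: the proof of the preceding lemma in fact yields $\| \leftbrac \sigma, \nu \rightbrac_{\cA} \|_K \leqslant C \, \| \sigma \|_{K'}$ with $C$ depending only on $\nu$ through $\sup_n |\nu|(A_n)/|A_n|$, which is exactly what is required here. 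No further work is needed, and there is no genuine obstacle beyond this bookkeeping.
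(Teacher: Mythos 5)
Your argument is correct and is essentially the proof the paper intends: the corollary is left as an immediate consequence of the preceding norm estimate, and you have simply written out the reduction via $\tau_t \leftbrac \mu , \nu \rightbrac_{\cA} - \leftbrac \mu , \nu \rightbrac_{\cA} = \leftbrac \tau_t\mu - \mu , \nu \rightbrac_{\cA}$ (translation compatibility plus linearity of restriction and convolution at each finite $n$) together with the equivalence of the norms $\|\cdot\|_{K}$ and $\|\cdot\|_{K'}$. You are also right to flag the direction of the estimate: the computation in the preceding lemma does yield $\| \leftbrac \sigma , \nu \rightbrac_{\cA} \|_{K} \leqslant C\, \|\sigma\|_{K'}$ with $C$ depending on $\nu$ only through $\sup_n |\nu|(A_n)/|A_n|$, which is exactly the version needed when the \emph{first} argument is norm almost periodic.
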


 The previous results carry over for free to the Eberlein convolution.

\begin{coro}
Let $K$ be any compact set with non-empty interior.
\begin{itemize}
\item[(a)] The mapping $\nu \mapsto \mu \ebeA \nu$ is continuous from $({\mathcal EB}_{\cA}(\mu), \tau_{\operatorname{p}})$ to $(\cM^\infty, \tau_{\operatorname{p}})$.
\item[(b)] The mapping $\nu \mapsto \mu \ebeA \nu$ is continuous from $({\mathcal EB}_{\cA}(\mu), \| \cdot \|_{K})$ to $(\cM^\infty, \| \cdot \|_{K})$.  \qed
\end{itemize}
\end{coro}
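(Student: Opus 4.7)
The plan is to reduce the statement for the Eberlein convolution to the preceding proposition about the twisted Eberlein convolution by means of the identity
\[
\mu \ebeA \nu \;=\; \leftbrac \mu , \widetilde{\nu} \rightbrac_{\cA},
\]
which follows from the definition $\leftbrac \mu , \sigma \rightbrac_{\cA} = \mu \ebeA \widetilde{\sigma}$ upon taking $\sigma = \widetilde{\nu}$ and using $\widetilde{\widetilde{\nu}} = \nu$. Thus the map $\nu \mapsto \mu \ebeA \nu$ factors as the composition
\[
\nu \;\longmapsto\; \widetilde{\nu} \;\longmapsto\; \leftbrac \mu , \widetilde{\nu} \rightbrac_{\cA}.
\]
In particular, the identity also shows that the reflection sends $\mathcal{EB}_{\cA}(\mu)$ into $\mathcal{TEB}_{\cA}(\mu)$, so the composition is well defined on the relevant domain.

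The first step is then to verify that the reflection $\nu \mapsto \widetilde{\nu}$ is a homeomorphism of $\cM^\infty(G)$ with respect to both topologies under consideration. For the norm $\| \cdot \|_{K}$, a direct change of variables gives $|\widetilde{\nu}|(t+K) = |\nu|(-t-K)$, hence $\| \widetilde{\nu} \|_{K} = \| \nu \|_{-K}$. Since $-K$ is also compact with non-empty interior, and by the fact (already used in the proof of the preceding proposition) that any two such norms are equivalent, reflection is continuous in $\| \cdot \|_{K}$. For the product topology $\tau_{\operatorname{p}}$, I would note the identity $\widetilde{\nu}*\varphi = \widetilde{\ts \nu * \widetilde{\varphi}\ts}$, from which one reads off $\| \widetilde{\nu} \|_{\varphi} = \| \nu \|_{\widetilde{\varphi}}$; since $\widetilde{\varphi} \in \Cc(G)$ whenever $\varphi \in \Cc(G)$, this yields continuity of reflection in $\tau_{\operatorname{p}}$.

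The second step is to invoke the preceding proposition on the twisted Eberlein convolution, which asserts that $\sigma \mapsto \leftbrac \mu , \sigma \rightbrac_{\cA}$ is continuous from $(\mathcal{TEB}_{\cA}(\mu), \tau_{\operatorname{p}})$ into $(\cM^\infty(G), \tau_{\operatorname{p}})$ and from $(\mathcal{TEB}_{\cA}(\mu), \| \cdot \|_{K})$ into $(\cM^\infty(G), \| \cdot \|_{K})$. Composing this with the reflection yields both (a) and (b).

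There is no real obstacle here; the only thing to check carefully is the compatibility of the reflection with the two semi-norm families, which is an immediate calculation. The essential content has already been proved for the twisted version, and the corollary simply transfers that content via the tilde.
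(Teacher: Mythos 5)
Your proposal is correct and matches the paper's (implicit) argument: the paper offers no proof beyond the remark that the twisted results ``carry over for free,'' and the intended mechanism is precisely the identity $\mu \ebeA \nu = \leftbrac \mu , \widetilde{\nu} \rightbrac_{\cA}$ together with the continuity of reflection, which you verify correctly via $\| \widetilde{\nu} \|_{K} = \| \nu \|_{-K}$ and $\| \widetilde{\nu} \|_{\varphi} = \| \nu \|_{\widetilde{\varphi}}$.
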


\begin{coro}
Let $\mu,\nu\in \mc{M}^{\infty}(G)$ and let $\mc{A}=(A_n)_{n\in\NN}$ be a van Hove sequence such that $\mu\ebeA\nu$ exists. If $\mu$ is norm almost periodic, then $\mu\ebeA\nu$ is norm almost periodic.  \qed
\end{coro}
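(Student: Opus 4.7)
The plan is to reduce the claim directly to the corresponding twisted-convolution statement proved just above. By the definition of the twisted Eberlein convolution of measures, $\leftbrac \mu , \widetilde{\nu} \rightbrac_{\cA} := \mu \ebeA \widetilde{\widetilde{\nu}} = \mu \ebeA \nu$, so the existence of $\mu \ebeA \nu$ is equivalent to the existence of $\leftbrac \mu , \widetilde{\nu} \rightbrac_{\cA}$ and the two measures then coincide. Since $\widetilde{\nu}$ is again translation bounded, I would apply the preceding corollary with $\widetilde{\nu}$ in place of $\nu$: the norm almost periodicity of $\mu$ then yields that $\leftbrac \mu , \widetilde{\nu} \rightbrac_{\cA} = \mu \ebeA \nu$ is norm almost periodic, which is exactly the statement.

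A stand-alone proof, paralleling the twisted case, could be given instead. The key identity is
\[
\mu \ebeA \nu - \tau_t(\mu \ebeA \nu) = (\mu - \tau_t \mu) \ebeA \nu \ts,
\]
obtained by combining the translation-compatibility relation $\tau_t(\mu \ebeA \nu) = (\tau_t \mu) \ebeA \nu$ with the bilinearity of the Eberlein convolution (the existence of each intermediate convolution being guaranteed by results earlier in this section). Rewriting the right-hand side as $\nu \circledast_{-\cA}(\mu - \tau_t \mu)$ via Eberlein commutativity allows one to invoke the quantitative norm estimate in the second slot (the obvious analogue for $\circledast_{-\cA}$ of the norm lemma proved earlier for the twisted convolution), giving
\[
\| \mu \ebeA \nu - \tau_t(\mu \ebeA \nu) \|_K \leqslant C \ts \| \mu - \tau_t \mu \|_{K'}
\]
for some compact $K' \supseteq K$ and a constant $C = C(\nu) > 0$. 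The norm almost periodicity of $\mu$ then supplies, for each $\eps > 0$, a relatively dense set of $t \in G$ with $\| \mu - \tau_t \mu \|_{K'} < \eps/C$, completing the argument.

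No genuine obstacle arises in either route. The only delicate point in the direct proof is checking that all intermediate Eberlein convolutions are well defined, and this is handled transparently by the translation-compatibility and bilinearity theorems from the preceding pages.
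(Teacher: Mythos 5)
Your first route is exactly the paper's (implicit) argument: the paper states that ``the previous results carry over for free to the Eberlein convolution,'' meaning precisely the substitution $\mu \ebeA \nu = \leftbrac \mu , \widetilde{\nu} \rightbrac_{\cA}$ (using $\widetilde{\widetilde{\nu}}=\nu$ and that $\widetilde{\nu}$ is again translation bounded) applied to the twisted-convolution corollary. The proposal is correct and takes essentially the same approach; the stand-alone second route is a sound but unnecessary alternative.
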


\medskip

We complete this section by looking at some examples.

\begin{remark}
The results we obtained in this section are optimal in the following sense.
\begin{enumerate}
\item[(a)] If $\mu,\nu\in\mathcal{M}^{\infty}(G)$, then $ \leftbrac \mu , \nu \rightbrac_{\cA} $ does not need to be strongly almost periodic.
\item[(b)] If $\mu,\nu\in\mathcal{M}^{\infty}(G)$ and $\mu$ is mean almost periodic, then $\ \leftbrac \mu , \nu \rightbrac_{\cA} $ does not need to be norm almost periodic.  \exend
\end{enumerate}
\end{remark}

To see this, let us consider two examples.

\begin{example}
The autocorrelation measure of the Bernoulli comb
\[
\omega_{B} = \sum_{m\in\ZZ} W_m\ts \delta_m,
\]
where $(W_m)_{m\in\ZZ}$ is a family of i.i.d. random variables (the latter taking the values $1$ and $-1$ with probabilities $p$ and $1-p$, where $0<p<1$) is almost surely given by
\[
\gamma_{\omega_B} = (2p-1)^2\ts \lm + 4p(1-p)\ts \delta_0
\]
\cite[Prop.~11.1]{TAO}, which is weakly but not strongly almost periodic.  \exend
\end{example}

\begin{example}
Let $\alpha>0$ be an irrational number. Define
\[
\mu:= \leftbrac \delta_{\ZZ}+\delta_{\alpha\ZZ} ,
\delta_{\ZZ}+\delta_{\alpha\ZZ} \rightbrac_{\cA}  = \delta_{\ZZ} +
\frac{2}{\alpha}\ts \lm + \frac{1}{\alpha}\ts \delta_{\alpha\ZZ},
\]
compare \cite[Ex. 8.10]{TAO}. A short calculation gives $\tau_t\mu=\delta_{t+\ZZ}+\frac{2}{\alpha}\ts \lm + \frac{1}{\alpha}\ts \delta_{t+\alpha\ZZ}$. Let $K$ be a compact subset of $\RR$. Then, we obtain
\begin{align*}
\|\tau_t\mu-\mu\|_K
    &= \sup_{x\in\RR}\ \Big|\delta_{t+\ZZ} - \delta_{\ZZ} + \frac{1}{\alpha}\ts \delta_{t+\alpha\ZZ} -\frac{1}{\alpha}
        \ts \delta_{\alpha\ZZ} \Big|(x+K)  \\
    &\geqslant \min \Big\{\frac{1}{\alpha},\ts 1,\ts 1+\frac{1}{\alpha},\ts |1-\frac{1}{\alpha}| \Big\}
\end{align*}
if $t\neq0$. Hence, even the twisted Eberlein convolution of two strongly almost periodic measures does not need to be norm almost periodic.  \exend
\end{example}

 Next, we construct an example of an absolutely continuous measure $\mu=f \lm$ with continuous density function such that $ \leftbrac \mu , \mu \rightbrac_{\cA}$ is a pure point measure.

\begin{example}
Let $(f_n)_{n\in\NN}$ be a sequence in $\Cc(\RR)$ with the following properties:
\begin{itemize}
\item{} $\supp(f_n) \subseteq [-\frac{1}{n}, \frac{1}{n}]$ and $\supp(f_0) \subseteq [-1,1]$.
\item{} $f_n \geqslant 0$.
\item{} $\int f_n(t)\ts \dd t =1$.
\end{itemize}
Define
\[
f= \sum_{n \in \ZZ} \delta_n*f_{|n|}  \qquad  \text{ and } \qquad \mu= f \lm  \ts.
\]
Then, for $\cA = ([-n,n])_{n\in\NN}$, the convolution $\mu \ebeA \mu$ exists and
\[
 \leftbrac \mu , \mu \rightbrac_{\cA}  = \delta_\ZZ \ts.
\]

To see this, note here that in the vague topology we have
\[
\lim_{|n| \to \infty} (f_n \theta_G- \delta_n) =0 \ts.
\]
Then, by \cite[Prop.~41]{SS4} we have $\mu -\delta_{\ZZ} \in  \WAP_{0}(G)$.
In particular, $\mu \in \WAP(\RR)$ has the Eberlein decomposition
\[
  \mu_{\mathsf{s}} =\delta_{\ZZ}  \ts, \qquad
  \mu_{0} =\mu- \delta_{\ZZ} \ts.
\]
Now, by \cite[Lemma~7.10]{LS2}, $\mu$ and $\mu_{\mathsf{s}}$ have the same autocorrelation, which proves the claim.
\exend
\end{example}

\bigskip

At the end of this section, let us talk about Besicovitch almost periodic measures. The following results are obtained from subsection~\ref{subsec:bes} via standard properties of the convolution
\[
\mathcal{B}ap(G) \cap \cM^{\infty}(G) \to Bap(G)\cap L^{\infty}(G) \ts, \qquad \mu \mapsto \mu *\phi \ts,
\]
for $\phi\in\Cc(G)$.
Since the proofs are straightforward and standard, we will omit them.

\begin{lemma}
Let $\mu\in \cM^{\infty}(G)$, and let $\chi\in\widehat{G}$. The following assertions are equivalent.
\begin{enumerate}
\item[(i)] $a_{\chi}^{\cA}(\mu)$ exists.
\item[(ii)] $\leftbrac \mu  , \chi \rightbrac_{\cA}$ exists.
\item[(iii)] There is a $t\in G$ such that
\[
\leftbrac \mu  , \chi \rightbrac_{\cA}(t)= \lim_{n\to\infty} \frac{1}{|A_n|} \int_{A_n}\overline{\chi(s-t)}\ts  \dd \mu(s)
\]
exists.
\end{enumerate}
In that case, one has $\leftbrac \mu  , \chi \rightbrac_{\cA}(t) =
\chi(t)\ts a_{\chi}^{\cA}(\mu)$.  \qed
\end{lemma}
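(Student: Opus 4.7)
The plan is to reduce this measure-theoretic statement to the elementary character identity $\chi(s-t) = \chi(s)\overline{\chi(t)}$, which allows the factor $\chi(t)$ to be pulled out of the integral. Concretely, for any $t \in G$, $n \in \NN$, and any van Hove set $A_n$, a direct computation gives
\[
\frac{1}{|A_n|} \int_{A_n} \overline{\chi(s-t)}\ \dd\mu(s)
  = \chi(t) \cdot \frac{1}{|A_n|} \int_{A_n} \overline{\chi(s)}\ \dd\mu(s) \ts.
\]
Since $|\chi(t)| = 1$, the limit on the left as $n \to \infty$ exists if and only if the limit on the right exists, and in that case the two limits differ exactly by the factor $\chi(t)$.

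From this single identity all three equivalences follow immediately. First, (i) $\Rightarrow$ (ii): if $a_{\chi}^{\cA}(\mu)$ exists, then the identity yields that the limit defining $\leftbrac \mu, \chi \rightbrac_{\cA}(t)$ exists for every $t \in G$ and equals $\chi(t)\ts a_{\chi}^{\cA}(\mu)$, simultaneously giving the claimed closed form. Next, (ii) $\Rightarrow$ (iii) is trivial since (ii) asserts existence at every $t$. Finally, (iii) $\Rightarrow$ (i): pick the specific $t \in G$ from (iii); the identity above shows $a_{\chi}^{\cA}(\mu) = \overline{\chi(t)}\ts \leftbrac \mu, \chi \rightbrac_{\cA}(t)$, so (i) holds.

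There is no real obstacle here; the argument is a verbatim adaptation of the function-case Lemma~\ref{bslemma} to the measure setting, and the only ingredient beyond the character identity is that $\chi$ is continuous (so that the integrals against $\mu$ make sense as pairings of a locally bounded measurable function with a translation bounded measure) and that $|\chi(t)| = 1$ (so that multiplication by $\chi(t)$ is invertible). For this reason the proposition is essentially a corollary, and in the paper's style it would be appropriate to either record the single-line computation above and refer to Lemma~\ref{bslemma} for the argument structure, or simply omit the proof as the authors explicitly propose.
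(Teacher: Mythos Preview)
Your proof is correct and matches exactly the approach the paper has in mind: the lemma is marked \qed\ without proof, and the surrounding text explains that the results of that block are obtained from the function-case Lemma~\ref{bslemma} by straightforward adaptation. Your single identity $\overline{\chi(s-t)}=\chi(t)\,\overline{\chi(s)}$ is precisely that adaptation, and your observation that the argument could either be recorded in one line or omitted entirely is spot on.
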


\begin{coro}
Let $\mu\in\cM^{\infty}(G)$. Then, $a_{\chi}^{\cA}(\mu)$ exists for
all $\chi\in\widehat{G}$ if and only if $\leftbrac \mu  , P
\rightbrac_{\cA}$ exists for all trigonometric polynomials
$P=\sum_{k=1}^n a_k\chi_k$.

In that case, one has $\leftbrac \mu  , P \rightbrac_{\cA}=
\sum_{k=1}^n \overline{a_k}\ts a_{\chi_k}^{\cA}(\mu)\ts \chi_k$. \qed
\end{coro}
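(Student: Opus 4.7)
The plan is to deduce this directly from the preceding lemma (the measure analogue of Lemma~\ref{bslemma}) combined with sesqui-linearity of $\leftbrac \cdot,\cdot \rightbrac_{\cA}$ in its second argument. Note that this mirrors exactly the earlier corollary for functions, so only minor modifications are needed and no new ideas enter.

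For the forward implication, assume $a_{\chi}^{\cA}(\mu)$ exists for every $\chi \in \widehat{G}$. By the preceding lemma, this is equivalent to saying that $\leftbrac \mu , \chi \rightbrac_{\cA}$ exists for each $\chi$, with value $\chi(t)\, a_{\chi}^{\cA}(\mu)$ at $t \in G$. Given a trigonometric polynomial $P = \sum_{k=1}^n a_k \chi_k$, I would compute the defining limit of $\leftbrac \mu , P \rightbrac_{\cA}(t)$ directly: since $\overline{P(s-t)} = \sum_{k=1}^n \overline{a_k}\, \overline{\chi_k(s-t)}$ is a finite sum, the limit
\[
\lim_{n\to\infty} \frac{1}{|A_n|} \int_{A_n} \overline{P(s-t)}\ \dd \mu(s) = \sum_{k=1}^n \overline{a_k} \lim_{n\to\infty} \frac{1}{|A_n|} \int_{A_n} \overline{\chi_k(s-t)}\ \dd \mu(s)
\]
may be interchanged with the finite sum, yielding
\[
\leftbrac \mu , P \rightbrac_{\cA}(t) = \sum_{k=1}^n \overline{a_k}\, \leftbrac \mu , \chi_k \rightbrac_{\cA}(t) = \sum_{k=1}^n \overline{a_k}\, a_{\chi_k}^{\cA}(\mu)\, \chi_k(t).
\]
This simultaneously establishes existence of $\leftbrac \mu, P \rightbrac_{\cA}$ and the explicit formula.

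For the converse, if $\leftbrac \mu , P \rightbrac_{\cA}$ exists for every trigonometric polynomial, then in particular it exists for the single-term polynomial $P = \chi$, and the preceding lemma then immediately yields existence of $a_{\chi}^{\cA}(\mu)$. There is no substantive obstacle here: the whole content of the corollary is the sesqui-linear extension from characters to their finite linear combinations, the relevant sesqui-linearity being available for free at the level of finite sums taken outside the limit.
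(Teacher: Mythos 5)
Your argument is correct and is essentially the intended one: the preceding lemma handles single characters, and finite sesquilinearity (interchanging a finite sum with the limit) extends it to trigonometric polynomials, with the converse obtained by specialising to $P=\chi$. The only point worth making explicit is that the twisted Eberlein convolution of measures is defined as a \emph{vague} limit, but since the approximants here are $\bigl(\sum_{k=1}^n \overline{a_k}\, c_{k,m}\, \chi_k\bigr)\theta_G$ with $c_{k,m}=\frac{1}{|A_m|}\int_{A_m}\overline{\chi_k(s)}\, \dd\mu(s)$, vague convergence is equivalent to convergence of each coefficient $c_{k,m}$, exactly as in statement (iii) of the preceding lemma, so your pointwise computation does establish the claim.
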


 Exactly as for functions, the twisted Eberlein convolution exists for Besicovitch almost periodic measures, is strongly almost periodic, and satisfies the generalized CPP.

\begin{prop}
Let $\nu\in\mathcal{B}ap(G)\cap\cM^{\infty}(G)$, and let $\mu\in\cM^{\infty}(G)$ be such that $a_{\chi}^{\cA}(\mu)$ exists for all $\chi\in\widehat{G}$. Then,
\begin{enumerate}
\item[(a)] $\leftbrac \mu  , \nu \rightbrac_{\cA}$ exists.
\item[(b)] $\leftbrac \mu  , \nu \rightbrac_{\cA}\in\mathcal{SAP}(G)$.
\item[(c)] $a_{\chi}^{\cA}(\leftbrac \mu  , \nu \rightbrac_{\cA}) = a_{\chi}^{\cA}(\mu) \ts \overline{a_{\chi}^{\cA}(\nu)}$.   \qed
\end{enumerate}
\end{prop}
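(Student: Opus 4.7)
The plan is to transfer the corresponding function-level result Proposition~\ref{prop:tebe-bap-exists} to measures by convolving with test functions from $\Cc(G)$, exactly as the paragraph preceding the statement indicates. The one auxiliary fact needed is the standard transference identity
\[
a_\chi^{\cA}(\mu\ast\varphi) \,=\, \widehat{\varphi}(\chi)\, a_\chi^{\cA}(\mu) \,, \qquad \widehat{\varphi}(\chi):=\int_G \varphi(s)\ts\overline{\chi(s)}\ \dd s \,,
\]
valid whenever $a_\chi^{\cA}(\mu)$ exists. I would verify this by Fubini after the change of variables $s=t-x$: the inner integral equals $\widehat{\varphi}(\chi)$ on the interior piece, and the boundary contribution is controlled by the van Hove bound $|\mu|(\partial^K A_n)/|A_n|\to 0$, where $K=\supp(\varphi)$.

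For part (a), Proposition~\ref{prop1}(c) reduces existence of $\leftbrac \mu,\nu \rightbrac_{\cA}$ to existence of the scalar mean $M_{\cA}\bigl((\mu\ast\varphi)\cdot\overline{(\nu\ast\psi)}\bigr)$ for every pair $\varphi,\psi\in\Cc(G)$. Setting $f:=\mu\ast\varphi$ and $g:=\nu\ast\psi$, we have $f\in\Cu(G)\subseteq BL^2_{\cA}(G)$ with all Fourier--Bohr coefficients well defined by the identity above, while $g\in Bap^2_{\cA}(G)\cap L^\infty(G)$ from the definition of $\mathcal{B}ap(G)$ for measures together with translation boundedness of $\nu$. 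Proposition~\ref{prop:tebe-bap-exists} applied to this $f,g$ then yields $\leftbrac f,g \rightbrac_{\cA}\in SAP(G)$, and evaluating at $t=0$ produces $M_{\cA}(f\bar g)$; this settles (a).

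For part (b), I would invoke Theorem~\ref{thm tebe is SAP}(b) after observing that $\mathcal{B}ap(G)\cap\cM^{\infty}(G)\subseteq\cM\mathtt{ap}_{\cA}(G)$: convolving a Besicovitch $2$-almost periodic function with any $\phi\in\Cc(G)$ yields a function in $Bap^2_{\cA}(G)\cap\Cu(G)$, and the trivial inequality $\|\cdot\|_{b,1,\cA}\le\|\cdot\|_{b,2,\cA}$ on bounded functions shows that any such function is mean almost periodic.

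For part (c), I would take Fourier--Bohr coefficients on both sides of the measure-level identity furnished by Corollary~\ref{lem87},
\[
\leftbrac \mu\ast\varphi,\nu\ast\psi \rightbrac_{\cA} \,=\, \leftbrac \mu,\nu \rightbrac_{\cA}\ast\varphi\ast\widetilde{\psi} \,.
\]
The left side, by the function-level CPP of Proposition~\ref{prop:tebe-bap-exists} combined with the auxiliary identity, has coefficient $\widehat{\varphi}(\chi)\overline{\widehat{\psi}(\chi)}\, a_\chi^{\cA}(\mu)\overline{a_\chi^{\cA}(\nu)}$; the right side, using the same auxiliary identity together with $\widehat{\widetilde{\psi}}(\chi)=\overline{\widehat{\psi}(\chi)}$, has coefficient $\widehat{\varphi}(\chi)\overline{\widehat{\psi}(\chi)}\, a_\chi^{\cA}(\leftbrac \mu,\nu \rightbrac_{\cA})$. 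Choosing $\varphi,\psi\in\Cc(G)$ with $\widehat{\varphi}(\chi),\widehat{\psi}(\chi)\ne 0$ (for instance any strictly positive approximate identity supported near $0$) cancels the common nonzero factor and yields (c). The only mildly technical step is the auxiliary Fourier--Bohr identity; everything else is a direct transfer from the function-level theorem.
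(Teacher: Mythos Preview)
Your proposal is correct and follows exactly the route the paper indicates: the paper omits the proof entirely, stating only that the result is ``obtained from subsection~\ref{subsec:bes} via standard properties of the convolution $\mathcal{B}ap(G)\cap\cM^{\infty}(G)\to Bap(G)\cap L^{\infty}(G)$, $\mu\mapsto\mu*\phi$,'' and you have spelled out precisely that transfer via Proposition~\ref{prop1}(c), Corollary~\ref{lem87}, and Proposition~\ref{prop:tebe-bap-exists}, together with the auxiliary identity $a_\chi^{\cA}(\mu*\varphi)=\widehat{\varphi}(\chi)\,a_\chi^{\cA}(\mu)$.
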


In particular, we get that the twisted Eberlein convolution of two translation bounded Besicovitch almost periodic measures exists, is strongly almost periodic,
and satisfies the strong CPP.

\begin{coro}
Let $\mu,\nu \in\mathcal{B}ap(G)\cap\cM^{\infty}(G)$. Then,
\begin{enumerate}
\item[(a)] $\leftbrac \mu  , \nu \rightbrac_{\cA}$ exists.
\item[(b)] $\leftbrac \mu  , \nu \rightbrac_{\cA}\in\mathcal{SAP}(G)$.
\item[(c)] $a_{\chi}^{\cA}(\leftbrac \mu  , \nu \rightbrac_{\cA}) = a_{\chi}^{\cA}(\mu) \ts \overline{a_{\chi}^{\cA}(\nu)}$.  \qed
\end{enumerate}
\end{coro}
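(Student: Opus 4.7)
The plan is to observe that this corollary follows almost directly from the immediately preceding proposition once one verifies that any Besicovitch almost periodic measure automatically admits Fourier--Bohr coefficients at every character. Since $\mu \in \mathcal{B}ap(G) \cap \cM^\infty(G)$, the same fact that was used throughout the function-valued part of Subsection~\ref{subsec:bes} applies: Besicovitch almost periodic measures can be approximated in the appropriate Besicovitch semi-norm by trigonometric polynomials, and such approximation together with the continuity estimates for Fourier--Bohr coefficients (the measure-theoretic analogue of \cite[Cor.~3.9]{LSS} invoked in the proof of Proposition~\ref{prop:tebe-bap-exists}) forces $a_\chi^{\cA}(\mu)$ to exist for every $\chi \in \widehat{G}$. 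In fact, the lemma just before the preceding proposition gives this equivalence explicitly through $\leftbrac \mu , \chi \rightbrac_{\cA}$, so the hypothesis of the preceding proposition on $\mu$ is automatically satisfied.

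Concretely, first I would note that for any $\mu \in \mathcal{B}ap(G) \cap \cM^\infty(G)$ and any $\chi \in \widehat{G}$, approximating $\mu$ by a sequence of trigonometric polynomial measures $(Q_n)$ with $\|\mu - Q_n\|_{b,1,\cA,\varphi} \to 0$ for a suitable test function $\varphi$ (or invoking directly the measure-valued version of \cite[Lem.~3.5 and Cor.~3.9]{LSS}) shows that $a_\chi^{\cA}(Q_n)$ converges and hence $a_\chi^{\cA}(\mu)$ exists. This is the mild verification step; it is not an obstacle so much as a bookkeeping remark, because the analogous function-level fact was already used in the proof of Proposition~\ref{prop:tebe-bap-exists}.

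Once this is in place, since the second measure $\nu$ belongs to $\mathcal{B}ap(G) \cap \cM^\infty(G)$ and the first measure $\mu$ satisfies the required hypothesis of the preceding proposition, I would simply invoke that proposition to conclude parts (a), (b), and (c) of the corollary verbatim. There is nothing further to compute: existence of the twisted Eberlein convolution, its membership in $\SAP(G)$, and the generalized CPP identity $a_\chi^{\cA}(\leftbrac \mu , \nu \rightbrac_{\cA}) = a_\chi^{\cA}(\mu)\overline{a_\chi^{\cA}(\nu)}$ all transfer directly.

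The only conceivable subtlety, and the step I would single out as the main (albeit minor) obstacle, is making sure that the Fourier--Bohr coefficient $a_\chi^{\cA}(\mu)$ genuinely exists for \emph{every} character $\chi \in \widehat{G}$ when $\mu$ is merely Besicovitch almost periodic in the measure sense. If one prefers to avoid re-proving this, one can bypass it entirely by convolving with a fixed $\varphi \in \Cc(G)$: then $\mu * \varphi \in Bap_{\cA}^2(G) \cap L^\infty(G)$ by the standard convolution stability, hence $a_\chi^\cA(\mu * \varphi) = \widehat{\varphi}(\chi) a_\chi^\cA(\mu)$ exists for all $\chi$ and for suitable $\varphi$ one can divide through. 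This reduces the corollary cleanly to the preceding proposition and finishes the proof.
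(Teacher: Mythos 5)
Your proposal is correct and matches the paper's (implicit) argument: the corollary is stated with no separate proof precisely because it follows from the preceding proposition once one knows that Besicovitch almost periodic measures have well-defined Fourier--Bohr coefficients at every character, a fact the paper already records in the preliminaries with a citation to [LSS]. Your verification of that hypothesis, and the optional bypass via convolution with $\varphi\in\Cc(G)$, are exactly the intended bookkeeping.
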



\section{Fourier transformability of (twisted) Eberlein convolution}

We complete the paper by showing that the twisted Eberlein decomposition of translation bounded measures is a linear combination of positive definite measures. In particular it is Fourier transformable and weakly almost periodic, and listing some consequences of this.

\begin{prop}\label{ebe is comb pd}
Let $\mu, \nu \in \cM^\infty(G)$. Then $ \leftbrac \mu , \nu \rightbrac_{\cA}$ is a linear combination of (at most four) positive definite measures.
\end{prop}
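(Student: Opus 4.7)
The strategy is polarization: I would decompose $\leftbrac \mu,\nu\rightbrac_{\cA}$ into four ``diagonal'' pieces of the form $\leftbrac \sigma,\sigma\rightbrac$ and then verify that each diagonal piece is a positive definite measure. The first step is to pass to a common subsequence on which all the diagonal pieces exist. Using Lemma~\ref{lem:tebe exists subseq} together with a diagonal argument, I would extract a subsequence $\cB$ of $\cA$ along which all of $\leftbrac \mu, \mu\rightbrac_{\cB}$, $\leftbrac \nu, \nu\rightbrac_{\cB}$, $\leftbrac \nu, \mu\rightbrac_{\cB}$, and $\leftbrac \mu+i^k\nu, \mu+i^k\nu\rightbrac_{\cB}$ exist simultaneously for $k=0,1,2,3$. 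Since $\leftbrac \mu,\nu\rightbrac_{\cA}$ exists by assumption, $\leftbrac \mu,\nu\rightbrac_{\cB}$ exists and coincides with it.

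On this subsequence, the bilinearity in the first argument and conjugate-linearity in the second (established in the theorem following Lemma~\ref{lem:tebe exists subseq}) make the classical polarization calculation
\[
4\leftbrac \mu, \nu\rightbrac_{\cB} = \sum_{k=0}^{3} i^k \leftbrac \mu + i^k \nu, \mu + i^k \nu\rightbrac_{\cB}
\]
applicable, using only the elementary identities $\sum_{k=0}^3 i^k = \sum_{k=0}^3 i^{2k} = 0$ and $i^k\overline{i^k}=1$. Hence
\[
\leftbrac \mu,\nu\rightbrac_{\cA} = \tfrac{1}{4}\sum_{k=0}^{3} i^k \leftbrac \mu + i^k \nu, \mu + i^k \nu\rightbrac_{\cB}
\]
is a linear combination of four measures of the form $\leftbrac \sigma,\sigma\rightbrac_{\cB}$.

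It then remains to check that each $\leftbrac \sigma,\sigma\rightbrac_{\cB}$ is a positive definite measure. For every $n$ the restriction $\sigma|_{B_n}$ has finite total variation because $\sigma$ is translation bounded and $B_n$ is precompact. Using the identity $\widetilde{\alpha*\varphi}=\widetilde{\alpha}*\widetilde{\varphi}$ one obtains, for every $\eta\in\Cc(G)$,
\[
\bigl(\sigma|_{B_n} * \widetilde{\sigma|_{B_n}}\bigr)(\eta*\widetilde{\eta}) = \|\sigma|_{B_n}*\eta^{\dagger}\|_{2}^{2} \geqslant 0,
\]
so each $\sigma|_{B_n}*\widetilde{\sigma|_{B_n}}$ is a positive definite measure. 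Since the cone of positive definite measures is stable under multiplication by non-negative scalars and under vague limits (the inequality $\rho_n(\eta*\widetilde{\eta})\geqslant 0$ passes to the limit since $\eta*\widetilde{\eta}\in \Cc(G)$), the vague limit $\leftbrac \sigma,\sigma\rightbrac_{\cB}$ is itself positive definite. Combined with the previous paragraph, this completes the decomposition. The only slightly delicate step is the simultaneous diagonal extraction producing $\cB$, which is available precisely because Lemma~\ref{lem:tebe exists subseq} places the truncated averages in a metrisable vaguely compact set $\cM_{C,U}$.
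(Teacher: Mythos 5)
Your proposal is correct and follows essentially the same route as the paper: polarisation into four diagonal terms $\leftbrac \sigma,\sigma\rightbrac$, extraction of a common subsequence $\cB$ via the compactness in Lemma~\ref{lem:tebe exists subseq}, positive definiteness of each $\sigma|_{B_n}*\widetilde{\sigma|_{B_n}}$ passed to the vague limit, and the observation that $\leftbrac \mu,\nu\rightbrac_{\cA}=\leftbrac \mu,\nu\rightbrac_{\cB}$ since the former exists. The only cosmetic difference is that the paper applies the polarisation identity to the truncated measures for each fixed $n$ before taking limits, whereas you apply it to the limits after securing their simultaneous existence along $\cB$ via sesquilinearity; the two are equivalent.
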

\begin{proof}

 Consider the polarisation identity \cite[p.~244]{MoSt}
\begin{align*}
\mu|_{A_n}*\widetilde{\nu}|_{-A_n}
   &= \frac{1}{4} \bigg[ (\mu|_{A_n}+\nu|_{A_n})*\stackon[-8pt]{(\mu|_{A_n}
       +\nu|_{A_n})}{\vstretch{1.5}{\hstretch{2.4}{\widetilde{\phantom{\;\;\;\;\;\;\;\;}}}}}
       -(\mu|_{A_n}-\nu|_{A_n})*\stackon[-8pt]{(\mu|_{A_n}
       -\nu|_{A_n})}{\vstretch{1.5}{\hstretch{2.4}{\widetilde{\phantom{\;\;\;\;\;\;\;\;}}}}} \\
   &\phantom{===}{} +\im\ts (\mu|_{A_n}+\im\nu|_{A_n})*\stackon[-8pt]{(\mu|_{A_n}
       +\im\nu|_{A_n})}{\vstretch{1.5}{\hstretch{2.4}{\widetilde{\phantom{\;\;\;\;\;\;\;\;}}}}}
       -\im\ts (\mu|_{A_n}-\im\nu|_{A_n})*\stackon[-8pt]{(\mu|_{A_n}
       -\im\nu|_{A_n})}{\vstretch{1.5}{\hstretch{2.4}{\widetilde{\phantom{\;\;\;\;\;\;\;\;}}}}}
         \bigg].
\end{align*}
Now, by Lemma~\ref{lem:tebe exists subseq}, there exists a subsequence $\mc{B}=(B_n)_{n\in\NN}$ of $\mathcal{A}$ such that the four limits
\[
\lim_{n\to\infty} \frac{1}{|B_n|} (\mu|_{B_n}+\nu|_{B_n})*\stackon[-8pt]{(\mu|_{B_n}
       +\nu|_{B_n})}{\vstretch{1.5}{\hstretch{2.4}{\widetilde{\phantom{\;\;\;\;\;\;\;\;}}}}}, \quad
       \lim_{n\to\infty} \frac{1}{|B_n|} (\mu|_{B_n}-\nu|_{B_n})*\stackon[-8pt]{(\mu|_{B_n}
       -\nu|_{B_n})}{\vstretch{1.5}{\hstretch{2.4}{\widetilde{\phantom{\;\;\;\;\;\;\;\;}}}}},
\]
\[
\lim_{n\to\infty} \frac{1}{|B_n|} (\mu|_{B_n}+\im\rho|_{B_n})*\stackon[-8pt]{(\mu|_{B_n}
       +\im\nu|_{B_n})}{\vstretch{1.5}{\hstretch{2.4}{\widetilde{\phantom{\;\;\;\;\;\;\;\;}}}}}, \quad
       \lim_{n\to\infty} \frac{1}{|B_n|} (\mu|_{B_n}-\im\nu|_{B_n})*\stackon[-8pt]{(\mu|_{B_n}
       -\im\nu|_{B_n})}{\vstretch{1.5}{\hstretch{2.4}{\widetilde{\phantom{\;\;\;\;\;\;\;\;}}}}}
\]
exist. Then, these measures are positive definite, and $\leftbrac
\mu  ,  \nu \rightbrac_{\cB}$ is a linear combination of these four
measures. Finally, since $\cB$ is a subsequence of $\cA$, and $
\leftbrac  \mu, \nu \rightbrac_{\cA}$ exists, we have
\[
 \leftbrac \mu , \nu \rightbrac_{\cA}  = \leftbrac  \mu, \nu \rightbrac_{\cB}  \ts.  \qedhere
\]
\end{proof}

By combining Proposition~\ref{ebe is comb pd} with \cite[Thm.~4.11.5 and Cor.~4.11.6]{MoSt}, we obtain the following result (compare \cite[Lemma~1]{BG}).

\begin{coro}\label{cor3}
Let $\mu,\nu\in\mc{M}^{\infty}(G)$, and let $\mc{A}=(A_n)_{n\in\NN}$
be a van Hove sequence such that $ \leftbrac \mu ,  \nu
\rightbrac_{\cA}$ exists. Then,
\begin{enumerate}
\item[(a)] $ \leftbrac \mu , \nu \rightbrac_{\cA} \in\WAP(G)$.
\item[(b)] $ \leftbrac \mu , \nu \rightbrac_{\cA} $ is Fourier transformable. \qed
\end{enumerate}
\end{coro}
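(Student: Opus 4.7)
The plan is to read this off directly from Proposition~\ref{ebe is comb pd} combined with the two cited classical results. By Proposition~\ref{ebe is comb pd}, the measure $\leftbrac \mu , \nu \rightbrac_{\cA}$ can be written in the form
\[
\leftbrac \mu , \nu \rightbrac_{\cA} = \tfrac{1}{4}\bigl(\omega_1 - \omega_2 + \im\ts \omega_3 - \im\ts \omega_4\bigr),
\]
where each $\omega_j$ is a positive definite measure on $G$ (namely the limit along a suitable subsequence $\cB$ of $\cA$ of $|B_n|^{-1}\sigma_j|_{B_n}*\widetilde{\sigma_j|_{B_n}}$ for the appropriate combination $\sigma_j$ of $\mu$ and $\nu$).

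From this representation, both assertions follow by a one-line appeal to the cited structural facts. For (a), I would invoke \cite[Thm.~4.11.5]{MoSt}, which says that every positive definite measure is weakly almost periodic. Since $\WAP(G)$ is a (complex) vector subspace of $\cM^\infty(G)$ \textemdash\ indeed it is defined via the condition that $\mu *\varphi \in W\!AP(G)$ for all $\varphi \in \Cc(G)$ and $W\!AP(G)$ is a vector space under pointwise operations \textemdash\ the linear combination $\tfrac{1}{4}(\omega_1-\omega_2+\im\omega_3-\im\omega_4)$ is again in $\WAP(G)$. For (b), I would cite \cite[Cor.~4.11.6]{MoSt}, which gives Fourier transformability of positive definite measures, and then use that the set of Fourier transformable measures is a complex vector space (scaling and addition of Fourier transformable measures yields Fourier transformable measures, with the transform being linear).

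There is essentially no obstacle: all the content has been compressed into Proposition~\ref{ebe is comb pd}. The only small point to be careful about is that the coefficients in the polarisation identity are complex (the $\pm\im$), so one must invoke that Fourier transformability is preserved under multiplication by complex scalars, not just real ones; this is standard. Likewise, one should note that although the four limits $\omega_j$ were only extracted along a subsequence $\cB$ of $\cA$, the existence of $\leftbrac \mu,\nu\rightbrac_{\cA}$ forces the linear combination to have the same value along every such subsequence, so no ambiguity in the identification arises.
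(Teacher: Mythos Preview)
Your proposal is correct and follows precisely the approach the paper takes: it reduces the statement to Proposition~\ref{ebe is comb pd} and then invokes \cite[Thm.~4.11.5 and Cor.~4.11.6]{MoSt}, using that both $\WAP(G)$ and the class of Fourier transformable measures are closed under complex linear combinations. Your additional remarks about the complex scalars and the subsequence identification are accurate and already implicit in the paper's treatment of Proposition~\ref{ebe is comb pd}.
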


As usual, we get a twin result for the Eberlein convolution.

\begin{coro}
Let $\mu,\nu\in\mc{M}^{\infty}(G)$, and let $\mc{A}=(A_n)_{n\in\NN}$ be a van Hove sequence such that $\mu\ebeA \nu$ exists. Then.
\begin{enumerate}
\item[(a)] $\mu \ebeA \nu$ is a linear combination of (at most 4) positive definite measures.
\item[(a)] $\mu\ebeA \nu\in\WAP(G)$.
\item[(b)] $\mu\ebeA \nu$ is Fourier transformable. \qed
\end{enumerate}
\end{coro}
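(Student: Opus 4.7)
The plan is to obtain this corollary as a direct twin of Proposition~\ref{ebe is comb pd} and Corollary~\ref{cor3} by passing through the twisted Eberlein convolution. The key observation is the trivial identity
\[
\mu \ebeA \nu = \mu \ebeA \widetilde{\widetilde{\nu}} = \leftbrac \mu , \widetilde{\nu} \rightbrac_{\cA} \ts,
\]
which holds because $\widetilde{\widetilde{\nu}} = \nu$. Moreover, $\widetilde{\nu}$ lies in $\cM^\infty(G)$ whenever $\nu$ does, since $|\widetilde{\nu}|$ is the image of $|\nu|$ under $x \mapsto -x$ and hence is translation bounded. By the remark relating the Eberlein convolution and its twisted version, the existence of $\mu \ebeA \nu$ along $\cA$ is equivalent to the existence of $\leftbrac \mu , \widetilde{\nu} \rightbrac_{\cA}$.

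With this identity in hand, part (a) is immediate: apply Proposition~\ref{ebe is comb pd} to the pair $(\mu, \widetilde{\nu}) \in \cM^\infty(G) \times \cM^\infty(G)$ to conclude that $\leftbrac \mu , \widetilde{\nu} \rightbrac_{\cA}$ is a linear combination of at most four positive definite measures, and transport this through the identity above to $\mu \ebeA \nu$.

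Parts (b) and (c) are then immediate consequences of (a), using that any continuous positive definite measure is weakly almost periodic and Fourier transformable, and that both $\WAP(G)$ and the class of Fourier transformable measures are closed under finite linear combinations. These are exactly the inputs \cite[Thm.~4.11.5 and Cor.~4.11.6]{MoSt} already invoked in the proof of Corollary~\ref{cor3}, so no new machinery is required.

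There is really no substantial obstacle here: once one writes $\mu \ebeA \nu = \leftbrac \mu , \widetilde{\nu} \rightbrac_{\cA}$, the statement is simply the already established twisted version applied to $(\mu,\widetilde{\nu})$ in place of $(\mu,\nu)$. The only mild care is to verify that the $\sim$ operation preserves $\cM^\infty(G)$ and respects existence of the convolution along $\cA$ (rather than $-\cA$), but both verifications are direct from the definitions and so the corollary genuinely is a one-line consequence of the twisted case.
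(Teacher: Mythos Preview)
Your proposal is correct and matches the paper's intended approach: the paper gives no explicit proof here (just \qed, calling it a ``twin result''), and the implicit reduction is precisely the one you wrote out, namely $\mu \ebeA \nu = \leftbrac \mu , \widetilde{\nu} \rightbrac_{\cA}$ with $\widetilde{\nu}\in\cM^\infty(G)$, after which Proposition~\ref{ebe is comb pd} and Corollary~\ref{cor3} apply verbatim. One small wording slip: in your final paragraph you write ``any continuous positive definite measure'', but the cited results from \cite{MoSt} concern positive definite \emph{measures} (no continuity hypothesis is needed or meaningful here); otherwise the argument is clean.
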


We complete the paper by proving the following improvement of Corollary~\ref{cor2}.

\begin{coro}\label{cor1}
Let $f, g \in \Cu(G)$ be such that $\leftbrac
f , g \rightbrac_{\cA}$ exist. Then, $\leftbrac f , g
\rightbrac_{\cA}$ is weakly almost periodic.
\end{coro}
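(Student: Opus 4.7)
The strategy is to lift the statement from the function setting to the measure setting, where the analogue Corollary~\ref{cor3}(a) applies without requiring the existence of the self-convolutions $\leftbrac f,f\rightbrac_{\cA}$ and $\leftbrac g,g\rightbrac_{\cA}$ that Corollary~\ref{cor2} needs. Since $f,g\in\Cu(G)$ are bounded, the measures $\mu:=f\,\theta_G$ and $\nu:=g\,\theta_G$ are translation bounded. By Proposition~\ref{prop1}(b), the existence of $\leftbrac f,g\rightbrac_{\cA}$ is equivalent to that of $\leftbrac \mu,\nu\rightbrac_{\cA}$, and they are related by $\leftbrac \mu,\nu\rightbrac_{\cA}=\leftbrac f,g\rightbrac_{\cA}\,\theta_G$. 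Corollary~\ref{cor3}(a) then gives that $\leftbrac \mu,\nu\rightbrac_{\cA}$ is a weakly almost periodic measure.

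Next, I would note that the density $h:=\leftbrac f,g\rightbrac_{\cA}$ is itself uniformly continuous and bounded. Indeed, $f,g\in\Cu(G)\cap L^\infty(G)\subseteq BC^2_{\cA}(G)\cap L^\infty(G)$, so Corollary~\ref{core tebe cu} (applied with $p=q=2$) yields $h\in\Cu(G)$. By the definition of weak almost periodicity for measures, the conclusion of the previous step translates into the statement that $h\ast\varphi=(h\,\theta_G)\ast\varphi\in W\!AP(G)$ for every $\varphi\in\Cc(G)$.

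To finish, pick an approximate identity $(\phi_\alpha)_{\alpha}\subseteq\Cc(G)$. Since $h\in\Cu(G)$, one has $h\ast\phi_\alpha\to h$ with respect to $\|\cdot\|_\infty$. Each $h\ast\phi_\alpha$ lies in $W\!AP(G)$, and $W\!AP(G)$ is a norm-closed subspace of $(\Cu(G),\|\cdot\|_\infty)$. Hence $h\in W\!AP(G)$, which is the claim.

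I do not anticipate a serious obstacle: the only structural input needed beyond Corollaries~\ref{core tebe cu} and~\ref{cor3} is the closedness of $W\!AP(G)$ in the supremum norm, which is a standard property of weakly almost periodic functions. The point of the argument is conceptual rather than technical, namely that the measure-level result of Corollary~\ref{cor3}(a) is strictly stronger than Corollary~\ref{cor2} and, via the identification $\leftbrac \mu,\nu\rightbrac_{\cA}=\leftbrac f,g\rightbrac_{\cA}\,\theta_G$, removes the unnecessary hypothesis that $\leftbrac f,f\rightbrac_{\cA}$ and $\leftbrac g,g\rightbrac_{\cA}$ exist.
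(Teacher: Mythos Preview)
Your proof is correct and follows essentially the same route as the paper: lift to measures via Proposition~\ref{prop1}(b), apply Corollary~\ref{cor3}(a), and use Corollary~\ref{core tebe cu} to get $h\in\Cu(G)$. The only difference is that where the paper invokes \cite[Prop.~4.10.5]{MoSt} to pass from $h\,\theta_G\in\WAP(G)$ with $h\in\Cu(G)$ to $h\in W\!AP(G)$, you spell this implication out via an approximate identity and the $\|\cdot\|_\infty$-closedness of $W\!AP(G)$, which is precisely the content of that cited result.
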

\begin{proof} Let $\mu=f \theta_G$ and $\nu= g \theta_G$. By Proposition~\ref{prop1}, $\leftbrac \mu, \nu \rightbrac_{\cA}$ exists and
\[
\leftbrac \mu, \nu \rightbrac_{\cA} =\leftbrac f, g \rightbrac_{\cA} \theta_G \,.
\]
Now, by Corollary~\ref{core tebe cu}, we have $\leftbrac f, g \rightbrac_{\cA} \in \Cu(G)$, while by Corollary~\ref{cor3} we have $\leftbrac \mu, \nu \rightbrac_{\cA} \in \WAP(G)$. Therefore,
$\leftbrac f, g \rightbrac_{\cA}\in WAP(G)$ by \cite[Prop.~4.10.5]{MoSt}.
\end{proof}

 \subsection*{Acknowledgments} This work was supported by the German Research Foundation (DFG), within the
CRC 1283 at Bielefeld University and via research grant 415818660
(TS), and by the Natural Sciences and Engineering Council of Canada
(NSERC), via grant 2020-00038 (NS).

\end{document}